\documentclass[11pt]{article}
\usepackage[utf8]{inputenc}
\usepackage[T1]{fontenc}
\usepackage[english]{babel}

\usepackage{amsmath}
\usepackage{amssymb}
\usepackage{amsthm}
\usepackage{enumitem}
\usepackage{appendix}
\usepackage{geometry}
\geometry{hmargin=2cm,vmargin=2.5cm}
\usepackage{eurosym}
\usepackage{hyperref}
\hypersetup{
    colorlinks,
    citecolor=blue,
    filecolor=blue,
    linkcolor=blue,
    urlcolor=blue
}
\usepackage{todonotes}
\usepackage{mathrsfs}
 
 \usepackage{rotating}
 
%
%

\def\be{\begin{eqnarray}}
\def\ee{\end{eqnarray}}

\def\b*{\begin{eqnarray*}}
\def\e*{\end{eqnarray*}}

%
%

\newtheorem{Proposition}{Proposition}[part]

\newtheorem{Lemma}{Lemma}[part]
\newtheorem{Corollary}{Corollary}[part]
\newtheorem{Remark}{Remark}[part]

\newcommand{\ba}{\begin{array}}
\newcommand{\ea}{\end{array}}
\newcommand{\ben}{\begin{equation*}} 
\newcommand{\een}{\end{equation*}}
\newcommand{\bea}{\begin{eqnarray}}
 \newcommand{\eea}{\end{eqnarray}}
\newcommand{\bean}{\begin{eqnarray*}} 
\newcommand{\eean}{\end{eqnarray*}}
\newcommand{\bel}{\begin{align}} 
\newcommand{\eel}{\end{align}}
\newcommand{\beln}{\begin{align*}} 
\newcommand{\eeln}{\end{align*}}
\newcommand{\bit}{\begin{itemize}}
\newcommand{\eit}{\end{itemize}}

\makeatletter \@addtoreset{equation}{section}

\@addtoreset{Definition}{section}

\@addtoreset{Theorem}{section}

\@addtoreset{Proposition}{section}

\@addtoreset{Property}{section}

\@addtoreset{Assumption}{section}

\@addtoreset{Corollary}{section}

\@addtoreset{Lemma}{section}

\@addtoreset{Remark}{section}

\@addtoreset{Example}{section}

\@addtoreset{Condition}{section}

%
%






\def \E{\mathbb{E}}
\def \F{\mathbb{F}}
\def \H{\mathbb{H}}
\def \L{\mathbb{L}}
\def \M{\mathbb{M}}
\def \N{\mathbb{N}}
\def \P{\mathbb{P}}
\def \Q{\mathbb{Q}}
\def \R{\mathbb{R}}

\def \Z{\mathbb{Z}}

\def \G{\mathbb{G}}


\def\Cc{{\cal C}}

\def\Fc{{\cal F}}
\def\Gc{{\cal G}}

\def\Kc{{\cal K}}

\def\Pc{{\cal P}}

\def\Vc{{\cal V}}











\def\no{\noindent}

\def\={\;=\;}
\def\.{\;.}


\def \i{1\!\mbox{\rm I}}
\def\1{{\bf 1}}
\def \ep{\hbox{ }\hfill$\Box$}

\def \proof{{\noindent \bf Proof. }}



 \def\normeL2#1{\left\|{#1}\right\|_{L^2}}
 



\newcommand{\alias}[2]{
\providecommand{#1}{}
\renewcommand{#1}{#2}
}

\alias{\P}{\mathbb{P}}
\alias{\N}{\mathcal{N}}
\alias{\L}{\mathcal{L}}
\alias{\Z}{\mathbb{Z}}
\alias{\Q}{\mathbb{Q}}
\alias{\R}{\mathbb{R}}
\alias{\C}{\mathcal{C}}
\alias{\T}{\mathbb{T}}
\alias{\E}{\mathbb{E}}
\alias{\H}{\mathcal{H}}
\alias{\1}{\mathbf{1}}
\alias{\B}{\mathcal{B}}
\alias{\M}{\mathcal{M}}
\alias{\G}{\mathcal{G}}
\alias{\Y}{Y_{\bullet}}

\newcommand{\nc}{\newcommand}
\nc{\cA}{{\mathcal A}} \nc{\cB}{{\mathcal B}} \nc{\cC}{{\mathcal
C}} \nc{\cD}{{\mathcal D}} \nc{\bbD}{\mathbb{D}}
\nc{\cG}{{\mathcal G}} \nc{\cF}{{\mathcal F}} \nc{\cS}{{\mathcal
S}} \nc{\cU}{{\mathcal U}} \nc{\cH}{{\mathcal H}}
\nc{\cK}{{\mathcal K}} \nc{\cM}{{\mathcal M}} \nc{\cO}{{\mathcal
O}} \nc{\cP}{{\mathcal P}} \nc{\bbE}{\mathbb{E}}
\nc{\bbEP}{\mathbb{E}_{\mathbb{P}}}\nc{\bbL}{\mathbb{L}}
\nc{\bbP}{\mathbb{P}} \nc{\bbQ}{\mathbb{Q}} \nc{\del}{\partial}
\nc{\Om}{\Omega} \nc{\om}{\omega} \nc{\bbR}{\mathbb{R}}
\nc{\bbC}{\mathbb{C}} \nc{\bfr}{\begin{flushright}}
\nc{\efr}{\end{flushright}} \nc{\dXt}{\Delta X_{t}}
\nc{\dXs}{\Delta X_{s}} \nc{\bs}{\blacksquare} \nc{\dX}{\Delta X}
\nc{\dY}{\Delta Y}
\nc{\dnkx}{\left(X(T^{n}_{k})-X(T^{n}_{k-1})\right)}
\nc{\esssup}{\mathrm{ess}\mbox{ }\mathrm{sup}}
\nc{\essinf}{\mathrm{ess}\mbox{ } \mathrm{inf}}
\nc{\dhats}{\widehat{\delta_s}}


\nc{\chf}{\mbox{$\mathbf1$}}
\nc{\ind}{\mathds{1}}

\begingroup\expandafter\expandafter\expandafter\endgroup
\expandafter\ifx\csname pdfsuppresswarningpagegroup\endcsname\relax
\else
  \pdfsuppresswarningpagegroup=1\relax
\fi


\nc{\drm}{\mathrm{d}}

\nc{\zfb}{ z_{\mbox{\footnotesize\rm\sc fb}} }
\nc{\zsb}{ z_{\mbox{\footnotesize\rm\sc sb}} }
\nc{\zsbm}{ z_{\mbox{\footnotesize\rm\sc sb$_m$}} }

\nc{\zsbdot}{ z'_{\mbox{\footnotesize\rm\sc sb}} }

\nc{\gfb}{ \gamma_{\mbox{\footnotesize\rm\sc fb}} }
\nc{\gsb}{ \gamma_{\mbox{\footnotesize\rm\sc sb}} }
\nc{\afb}{ a_{\mbox{\footnotesize\rm\sc fb}} }
\nc{\asb}{ a_{\mbox{\footnotesize\rm\sc sb}} }
\nc{\bfb}{ b_{\mbox{\footnotesize\rm\sc fb}} }
\nc{\bsb}{ b_{\mbox{\footnotesize\rm\sc sb}} }

\nc{\xisb}{ \xi_{\mbox{\footnotesize\rm\sc sb}} }
\nc{\xifb}{ \xi_{\mbox{\footnotesize\rm\sc fb}} }
\nc{\xisbm}{ \xi_{\mbox{\footnotesize\rm\sc sb$_m$}} }
\nc{\xisbf}{ \xi_{\mbox{\footnotesize\rm\sc sb}}^{\mbox{\footnotesize\rm\sc f}} }
\nc{\xifbf}{ \xi_{\mbox{\footnotesize\rm\sc fb}}^{\mbox{\footnotesize\rm\sc f}} }
\nc{\xisbmf}{ \xi_{\mbox{\footnotesize\rm\sc sb$_m$}}^{\mbox{\footnotesize\rm\sc f}} }
\nc{\xisbv}{ \xi_{\mbox{\footnotesize\rm\sc sb}}^{\mbox{\footnotesize\rm\sc v}} }
\nc{\xifbv}{ \xi_{\mbox{\footnotesize\rm\sc fb}}^{\mbox{\footnotesize\rm\sc v}} }
\nc{\xisbmv}{ \xi_{\mbox{\footnotesize\rm\sc sb$_m$}}^{\mbox{\footnotesize\rm\sc v}} }

\nc{\piesb}{ \pi_{\mbox{\footnotesize\rm\sc sb}}^{\mbox{\footnotesize\rm\sc e}} }
\nc{\piefb}{ \pi_{\mbox{\footnotesize\rm\sc fb}}^{\mbox{\footnotesize\rm\sc e}} }
\nc{\piesbm}{ \pi_{\mbox{\footnotesize\rm\sc sb$_m$}}^{\mbox{\footnotesize\rm\sc e}} }

\nc{\pivsb}{ \pi_{\mbox{\footnotesize\rm\sc sb}}^{\mbox{\footnotesize\rm\sc v}} }
\nc{\pivfb}{ \pi_{\mbox{\footnotesize\rm\sc fb}}^{\mbox{\footnotesize\rm\sc v}} }
\nc{\pivsbm}{ \pi_{\mbox{\footnotesize\rm\sc sb$_m$}}^{\mbox{\footnotesize\rm\sc v}} }

\nc{\msb}{ m_{\mbox{\footnotesize\rm\sc sb}} }
\nc{\mfb}{ m_{\mbox{\footnotesize\rm\sc fb}} }
\nc{\msbm}{ m_{\mbox{\footnotesize\rm\sc sb$_m$}} }

\nc{\sigmasb}{ \sigma_{\mbox{\footnotesize\rm\sc sb}} }
\nc{\sigmafb}{ \sigma_{\mbox{\footnotesize\rm\sc fb}} }
\nc{\Vsb}{ V^{\mbox{\footnotesize\rm\sc sb}} }
\nc{\Vsbm}{ V^{\mbox{\footnotesize\rm\sc sb$_m$}} }
\nc{\Vfb}{ V^{\mbox{\footnotesize\rm\sc fb}} }
\nc{\Gsb}{ \G^{\mbox{\footnotesize\rm\sc sb}} }
\nc{\nusb}{ \nu^{\mbox{\footnotesize\rm\sc sb}} }
\nc{\nufb}{ \nu^{\mbox{\footnotesize\rm\sc fb}} }
\nc{\Hv}{ H_{\rm v} }
\nc{\Hm}{ H_{\rm m} }

\widowpenalty10000
\clubpenalty10000

\begin{document}

\title{Optimal electricity demand response contracting \\with responsiveness incentives}

\author{René Aïd\thanks{Universit\'e Paris--Dauphine, PSL Research University, LeDA. The author received the support of the 
Finance for Energy Markets Research Centre and of the ANR CAESARS ANR-15-CE05-0024-02. 
The author would like to thank the Isaac Newton Institute for Mathematical Sciences, Cambridge, for support and hospitality during the programme {\em Mathematics for Energy Systems} where work on this paper was undertaken. This work was supported by EPSRC grant no EP/R014604/1.\&\#34 .This work benefited from comments at the SIAM Conference on Financial Mathematics \& Engineering, Austin (2016), at the 
10$^{\text{th}}$ Bachelier World Congress, Dublin (2018). A vulgarisation of the model presented in this paper was published in 
{\em SIAM News}, June 2017. The author thanks for their comments Hung--Po Chao, Ivar Ekeland, Sean Meyn  and Stéphane Villeneuve. rene.aid@dauphine.psl.eu.  }
\and Dylan Possama\"i\thanks{Columbia University, IEOR, 500W 120th St., 10027 New York, NY, dp2917@columbia.edu. 
Research supported by the ANR project PACMAN ANR-16-CE05-0027.}
\and Nizar Touzi\thanks{\'Ecole Polytechnique, CMAP, nizar.touzi@polytechnique.edu}
}

\date{\today}

\maketitle
\begin{abstract}
Despite the success of demand response programs in retail electricity markets in reducing average consumption, the random responsiveness of consumers to price event makes their efficiency questionable to achieve the flexibility needed for electric systems with a large share of renewable energy. The variance of consumers' responses depreciates the value of these mechanisms and makes them weakly reliable. This paper aims at designing demand response contracts which allow to act on both the average consumption and its variance. 

The interaction between a risk--averse producer and a risk--averse consumer is modelled through a Principal--Agent problem, thus accounting for the moral hazard underlying demand response contracts. The producer, facing the limited flexibility of production, pays an appropriate incentive compensation to encourage the consumer to reduce his average consumption and to enhance his responsiveness. We provide closed--form solution for the optimal contract in the case of constant marginal costs of energy and volatility for the producer and constant marginal value of energy for the consumer. We show that the optimal contract has a rebate form where the initial condition of the consumption serves as a baseline. Further, the consumer cannot manipulate the baseline at his own advantage. The first--best price for energy is a convex combination of the marginal cost and the marginal value of energy where the weights are given by the risk--aversion ratios, and the first--best price for volatility is the risk--aversion ratio times the marginal cost of volatility. The second--best price for energy and volatility are non--constant and non--increasing in time. The price for energy is lower (resp. higher) than the marginal cost of energy during peak--load (resp. off--peak) periods.

We illustrate the potential benefit issued from the implementation of an incentive mechanism on the responsiveness of the consumer by calibrating our model with publicly available data. We predict a significant increase of responsiveness under our optimal contract and a significant increase of the producer satisfaction. 
\end{abstract}

\clearpage
\section{Introduction}

This paper studies the use of demand response contracts to achieve flexible power systems. In its common form, a demand response mechanism is a contract under which the consumer benefits from cheaper electricity than the standard tariff all the year except at certain peak load periods chosen by the producer where the price is much higher. These soft mechanisms appear to cumulate the virtues of consumption reduction while providing substitutes to hardware technologies as chemical batteries or flexible gas--fired plants. There exist many forms of demand response contracts. For domestic customers, the latter form is the most common. An alternative form consists in giving a cash premium at the beginning of the year and then substracting the value of the energy consumed during price events (see Jessoe and Rapson (2014) \cite{Jessoe14} for an example). For industrial customers, the payment during price events is indexed on the difference between the energy consumed compared to the energy they would have consumed if they had not received any signal, i.e. the baseline consumption. This last form is referred to as Peak--Time Rebate (PTR). It is known that demand response contracts indexed on the consumer's baseline are prone to manipulation (see Chao and De Pillis (2013) \cite{Chao13}  in the context  of the Baltimore stadium management by Enerwise company case\footnote{Enerwise was fined a  \$780.000 penalty  by the Federal Energy Regulation Commission 143 FERC 61218 as of June 7th, 2013 for manipulation of a demand response program.} and Hogan (2009,2010) \cite{Hogan09,Hogan10}, Crampes and Léautier (2015) \cite{Crampes15}, Brown and Sappington (2016) \cite{Brown16}). 

Moral hazard is a central issue in demand response contracts. Because the consumption during price event is random, it is not possible to know if the observed consumption has been reduced compared to what the consumer would have consumed if no price signal had been sent, the latter being non--observable. The success of a demand response mechanism depends on whether consumers do react on price events or not. But moral hazard makes it difficult to quantify the responsiveness of a given consumer or group of consumers, because this quantity is non--observable. This problem has been clearly identified in experiments but not necessarily connected to a moral hazard factor. For instance, in the context of the large--scale demand response experiment of Low Carbon London project, Schofield et. al. (2016) \cite{Schofield16} defines responsiveness by {\em a deliberate act made by the consumer to reduce her consumption}, a definition that brings out that responsiveness is the result of the non--observable actions of the consumers. 

The moral hazard problem translates in the variance of responses observed in demand response experiments. Consumers do not react to price signal with the discipline of a gas--fired plant. Upon receiving a price signal, a household may give up the commitment to reduce consumption even if the price is high to satisfy the constraints of the everyday life (see Torriti (2017) \cite{Torriti17} for sociological analysis of households’ energy consumption behaviour). Faruqui and Sergici (2010)'s survey \cite{Faruqui10} gives an idea of the {\em ex--ante} uncertainty of a demand response program. A sample of 15 dynamic demand response trials shows an average reduction that ranges between 10\% to 50\%.  Further, again in the Low Carbon London pricing trial, Carmichael et al. (2014) \cite[Section 4.3, Figure~4.9]{Carmichael2014} reports consumer's reaction to high price event that ranges from --200~Watt to +200~Watt for a consumption of order of magnitude of 1000~Watt. Mathieu (2011) \cite[Table~6]{Mathieu11} reports that a furniture store with a peak demand of 1,300~kW reduced on average its consumption by 78~kW with a standard deviation of 30~kW. The moral hazard problem makes demand response contract poorly efficient and thus, less valuable than a deterministic substitute.

\hspace{5mm}

In this paper, we address the efficiency and design problems of demand response contracts using optimal contract theory with moral hazard. We show that under certain conditions to be described below, the optimal contract has a rebate form. Although this form of demand response contract is intensively used in the economic literature on power systems, it was not shown to our knowledge that it was indeed the optimal form. The economic literature has focused on the adverse selection problem posed by demand response, i.e. the fact that consumers may report strategic reduction capacity (see Crampes and L\'eautier (2015) and also Fahrioglu and Alvarado (2000) \cite{Fahrioglu00}) or on the analysis of the first--best case (see Hogan (2010, 2011) \cite{Hogan09, Hogan10}, Chao (2011) \cite{Chao11} and Brown and Sappington (2016) \cite{Brown16}).  Besides, the optimal contract is baseline--proofness, i.e. not susceptible of artificial increase in the benefit of the consumer in the sense that whatever the initial condition of the consumption, the consumer gets no more than the reservation utility he asked for. Furthermore, we show that it is possible and valuable for risk--averse producers to provide incentives to the consumer to increase the response to price events, i.e. to provide regular responses across price events or to increase the predictability of the consumption. This result is the implementation of the simple fact that uncertainty in payments deteriorates the value of a contract for a risk--averse agent. But, beyond this trivial remark, the problem is how to design a contract that achieves a reduction of the consumption with a small standard deviation when responsiveness is non--observable? Even though responsiveness is not observable, we show that indexing the contract on the quadratic variation of the consumption, which is an observable quantity, provides the desired result of making the consumption reduction more reliable.

\hspace{5mm}

We formulate the mechanism design of demand response programs as a problem optimal contracting between a producer and a consumer. We focus on one single demand response event with fixed duration. A risk--averse producer has the obligation to serve the random electricity consumption of a risk--averse consumer. The producer has a constant marginal cost of energy generation $\theta$ and is also subject to a constant marginal cost of the consumption volatility $h$. This direct cost of volatility aims to consider the impact of intermittency on generation costs. The volatility of the consumption is observable and given by the quadratic variation of the consumption. The consumer has also a constant marginal value of energy $\kappa$. He can reduce the average level of his consumption during the demand response event by curtailing some activities (washing machine, air conditioning, television and so on). Besides, the consumer can also decrease the uncertainty on the reduction by showing some discipline in his routine. The costs of actions on the average reduction and on the regularity of the reduction depend on the corresponding usages of electricity (lights, oven, air conditioning, television, computer...). A peak--period (resp. off--peak) is a case when the marginal cost of energy generation is higher (resp. lower) than the marginal value for the consumer $\kappa \leq \theta$ (resp. $\theta \leq \kappa$). The producer observes the total consumption trajectory and its quadratic variation but has no access to the consumer's actions, efforts or consumption per usages. She aims at finding the optimal contract that minimises the expected disutility from the energy generation cost, the direct cost of volatility and the incentive payment, while anticipating the optimal response of the consumer's maximisation of his expected utility from the payment, the benefit value of his consumption and the cost of efforts. Finally, the producer's problem is subject to the consumer's participation constraint. The admissible contracts are indexed only on the observed level of consumption and its volatility, and not on any assumed counterfactual baseline consumption.

\hspace{5mm}

We solve the first--best and second--best optimal contracting problems in closed form. We prove that in both cases, the optimal contract takes the form of a Peak--Time Rebate contract where the initial level of the consumption serves as a baseline. The consumer is charged the energy he would have consumed if no efforts were made and then, he is charged or he is paid at a deterministic price the difference between his realised consumption and its initial value. In the limiting case where the consumer is risk--neutral, the second--best contract achieves the first--best optimum, as it is usual in optimal contract theory with moral hazard. In this case, the consumer is charged for the energy at the marginal cost of energy and for the volatility at the direct marginal cost of volatility. Furthermore, the price of volatility is simply a share of the direct cost of volatility, where the share is the relative risk aversion of the producer compared to the consumer.

Apart from this limiting case, the risk--sharing process leads to charging the consumer at energy and volatility prices different from their marginal costs. In the first--best situation, the price of energy is a convex combination of the energy marginal value for the consumer and the marginal cost of the producer. The weights are given by the risk--ratios. During peak--periods, the price paid is higher than the marginal value of energy for the consumer, providing thus an incentive to reduce his consumption. During off--periods, the price paid is lower than this marginal value and thus, provides no incentives to reduce consumption. 

During peak periods, the producer induces efforts to reduce the average consumption and to increase responsiveness. The incentive prices are no longer equal to the marginal cost of energy and volatility. They depend on the costs of effort of the consumer. The second--best price for energy is non--constant and non--linear even in the case of constant marginal cost and value of energy. It is a decreasing function of time that induces more effort in the beginning of the period than at the end. Still in peak time periods, the second--best contract fails to achieve the first--best optimum. This is because the second--best contract fails to achieve an optimal coordination of the reduction of consumption and the reduction of volatility. Inducing a reduction of the volatility influences the payment for the reduction of consumption.  The first--best contracting does not suffer from this constraint. The first--best contracting can separate the induced observed efforts and the prices for reduction and thus can achieve a better coordination.  The consumer enjoys an information rent, which is an increasing function of the volatility of his consumption. During off--peak periods, the producer induces effort only on responsiveness and achieves first--best value by the implementation of second--best optimal contracting. First--best and second--best prices are equal while prices for volatility differ.

Our result provides rationality for considering only rebate contracts when assessing the appropriate introduction of demand response mechanisms in a market. But, it also appears that, from a theoretical point of view, the design of efficient incentive policies to foster consumer's demand response cannot solely rely on the marginal cost of electricity of the producer ---  as proposed by the FERC in Order n.~719  ---, or only on the difference between this marginal cost and the value paid by the consumer. These incentives are proxies for the optimal contract. But the second--best optimal contract should also consider the disutility incurred by the consumer for his efforts to induce the reduction, as well as the risk the consumer takes. We stress that this result applies even when there are no responsiveness incentives implemented. In this case, the closed--form expression of the energy price embeds the costs of effort of the consumer.

We also find that, under the optimal contract, the resulting consumption volatility may decrease as required, but it may also increase depending on the risk aversion parameters of both actors. Because he is risk--averse, the consumer has a natural incentive to reduce the consumption volatility, even without contract. Thus, before contracting one could observe a lower level of consumption volatility than the no--effort situation. After contracting, if the producer's volatility cost is small enough, or if she is much less risk--averse than the consumer, she would accept the volatility risk and thus, allow the consumer to avoid making costly effort to reduce volatility.   This result illustrates how the risk sharing process between the producer and the consumer allows the electric system to bear more risk.

\hspace{5mm}

Only experiments could quantify the benefits of responsiveness incentives claimed in this paper. Nevertheless, we illustrate the potential benefit from such a mechanism design by using the Low Carbon London data to provide quantitative estimates. We calibrate the parameters of our model by interpreting this experiment as the implementation of our optimal contracting model under no responsiveness incentives, i.e. the consumer is only incentivised to reduce the average level of his consumption. We find an important consumption volatility reduction and a significant increase of the producers benefit for a small increase of the cost of efforts from the consumer.  We examine the robustness of this result to the assumption of constant marginal costs and value of energy. We implement a numerical approximation of the solution of the optimal contracting problem for a concave consumer's energy value function, and we perform the comparison with the corresponding linear contract approximation. We find that the concavity of the energy value function of the consumer has a downward effect on the benefit of the producer. The linear approximation of the energy value function exacerbates this downward effect for strongly concave functions. Further, because it may not be necessarily easy to explain to domestic consumers an incentive mechanism based on the quadratic variation of their consumption, we propose simplified alternatives for the design of new experiments.

\hspace{5mm}

Our model stands at the intersection of optimal contract theory in continuous-time, the economics of demand response for power systems and their empirical implications. From a methodological point of view, our work falls in the line of the works of H\"olmstrom and Milgrom (1979) \cite{holmstrom1979moral}, Grossman and Hart (1983) \cite{grossman1983analysis}, H\"olmstrom and Milgrom (1987) \cite{Holmstrom87} and Sannikov (2008) \cite{Sannikov08}. Our results rely on the recent advances of Cvitani\'c et al. (2018) \cite{cvitanic2018dynamic} which allows for volatility control (i.e. responsiveness effort) in the continuous time Principal Agent problem.  For an economic introduction to incentives theory, we refer to Laffont and Martimort (2002) \cite{Laffont02}. The economic literature on optimal contract theory and risk-sharing is vast. We also refer to Cadellinas et al. (2007) \cite{Cadenillas07}, who present a general setting for the analysis of the first--best optimal risk--sharing in the context of compensation plan for executives, and to M\"uller (1998) \cite{Muller98} who provides the first--best optimal sharing rule in the case of CARA exponential utility function and shows that it is also a linear function of the aggregated output as in the second--best case. The closest work on volatility control in the framework of Principal--Agent is in Cvitani\'c et al. (2017) \cite{cvitanic2017moral}, however our paper is the first to produce a closed form solution in this context.

Regarding demand response programs, their study is a long--dated subject in the economic literature, see Tan and Varaiya (1993) \cite{Tan93} for a seminal model of interruptible contracts for a pool of consumers. The framework of optimal contract theory has been used to formulate the enrolment of customers in demand response programs as an adverse selection program. This idea can be traced back to the work of Fahrioglu and Alvarado (2000) \cite{Fahrioglu00} on incentive compatible demand response program and has been recently reused both in the work of Crampes and L\'eautier (2015) \cite{Crampes15} to design suitable base--line consumption reference, and by Alasseur et al. (2017) \cite{Alasseur17} for peak--load pricing. More recently, big data methods have been used to provide precise response of consumers with a given probability, see Kwac and Rajagopal (2014) \cite{Kwac16}. The only known work to the authors modeling the cost of electricity demand volatility is Tsitsiklis and Xu (2015) \cite{Tsitsiklis15} who designed a model in discrete--time where they consider not only the generation cost of energy but also the cost of variation of generation between two--time steps. In their model, consumers are incited to reduce their consumption with a price signal which is the traditional marginal fuel cost of generation plus the marginal of cost of variation of generation. The complexity of the model in terms of represented constraints only allows for numerical simulations. Further, our work contributes to the empirical literature on moral hazard models, see Chiappori and Salani\'e (2000) \cite{Chiappori00} for insurance industry, Lewis and Bajari (20014) \cite{Lewis14} for public procurement and  Bandiera et al. (2005) \cite{Bandiera05} for workers’ productivity. We make the testable prediction that the implementation of responsiveness incentives decreases consumption volatility. This claim can be tested by the next generation of demand response pricing trials by performing controlled experiments to compare consumers with and without responsiveness incentives as described in our work.

The paper is organised as follows. Section~\ref{sec:model} describes the model. Section~\ref{sec:results} provides the first--best, the second--best optimal contract and the optimal contract when no incentives on responsiveness is provided. Section~\ref{sec:numeric} provides empirical results. Section~\ref{ssec:practice} gives practical ways to implement a responsiveness incentive scheme. Section~\ref{sec:conclusion} concludes.

\section{The model}
\label{sec:model}

Before formulating our model in precise mathematical terms, we describe its main features and explain its underlying assumptions. We consider a single producer who has obligation--to--serve the electricity demand of a single residential consumer enjoying a flat retail rate. This situation captures the main regulatory barrier to price--responsive demand (see Chao~(2011) \cite{Chao11}).  The producer wonders if it is interesting or not to propose to the consumer a demand response contract to incite him to change his consumption during a given period $[0,T]$. It might be more efficient during peak period to pay the consumer to reduce his consumption rather than deploy costly generation power plants. Besides potential energy consumption reduction, the producer wonders if it is interesting to incite the consumer to provide a more predictable consumption during the price event. A highly random consumption induces adjustment costs that could be avoided if the consumer exhibits a more regular consumption pattern. Further, a highly random consumption reduction from one price event to another reduces the value of the contract as it increases the uncertainty of the payments to or from the consumer. Thus, the concern of the producer is to propose a contract of demand response that would allow her to avoid high generation cost and make the consumer more predictable. Nevertheless, the producer is facing a moral--hazard problem. Once the demand response contract is signed, when the price event occurs, he may change his mind and prefer to consume rather than make the costly efforts of reduction. Because the producer only observes the consumption and not the actions of the consumer in his house, it is not possible to infer if a consumption level is the result of efforts or just chance.

We focus on one single demand response event with fixed duration, making the hypothesis that all successive demand response events exhibit the same characteristics. This is genuinely the case in electricity industry, all demand response events share the same price structure. The timing of the operation is the following. At time zero, the producer proposes a paying rule for demand response, knowing the reservation utility of the consumer; at time one, the consumer accepts or rejects the contract; if the consumer accepts the contracts, then, later, the demand response event occurs, the producer measures in continuous--time the consumption and provides the payment (positive or negative) at the end of the event.

The consumption of the consumer is the sum of the consumption of different usages. The consumer values in monetary terms the electricity he can consume. Although it is more realistic to consider that the marginal value of each kWh is decreasing, we consider a constant marginal value of energy. The case of strictly concave energy value function is provided in the Appendix of the paper, but we focus on the constant marginal case where the results are explicit. The consumer can achieve a reduction of his consumption by taking costly actions. The actions are differentiated per usage. They consist in reducing or not the usage of each electric devices. These actions are costly in a sense that they may call for the usage of substitutes to achieve the same level of comfort. In times of demand response event during winter, households may use gas stoves or other heating devices. Instead of cooking the lunch and the dinner on the electric cooking range, they may use a camping gas stove. The consumer can also achieve regularity in his responses to the producer sollicitations by disciplining his consumption usages. Once the consumer has received the demand response signal, he decides the timing of his consumption and tries to stick to it. During the price event, random events occur that may drive away the consumer from his planned consumption pattern if no actions are taken to dampen their effects (kids coming back from school with friends, sudden drop of outside brightness...). This a way we can understand the control of the volatility of the consumption. The mitigation of the random events that occur during the price event would result in a lower volatility of the consumption which can be measured by its quadratic variation. In our setting, the responsiveness of the consumer is the unobservable actions he takes to reduce the impact of his daily life events on his consumption. In this way, we capture the fact that when the producer observes a reduction, it is not possible to determine surely if it is the result of voluntary actions of the consumer or if it is by chance.  Of course, the higher the discipline of the consumer, the higher the cost. Indeed, the consumer must make a compromise between the consumption pattern he wants to stick to and the random events of everyday life.  Reducing consumption and mitigating the effects of random events on a consumption pattern are two separate phenomena. A consumer could exhibit a high level of consumption but still be responsive if he sticks to a regular pattern of consumption. The cost function for consumption reduction and for responsiveness increase is supposed to be strictly convex. We specify a parametric form that captures convexity and separation of reduction and responsiveness cost that allows quantitative estimates. 

On her side, the producer bears two kind of costs: generation cost to meet consumption of the consumer on real--time basis and direct consumption volatility cost. The marginal generation cost is assumed constant. This assumption does not properly reflect the high volatility of electricity prices but it captures the Time--Of--Use tariffs which provide one price during peak period and another for off--peaks. The direct consumption volatility cost represents the costs induced by the non--predictable part of the consumption. There is quite a literature on the assessment of the direct and indirect costs induced by the massive introduction of intermittent renewable energy. For a review of this topic, we refer to Hirth (2015) \cite{Hirth15} and Strbac et. al. (2007) \cite{Strbac07}.  There is a consensus on the fact that the extra--generation cost is an increasing, probably strictly convex function of the volatility of the renewable energy sources. In our setting, we limit ourselves to a constant marginal cost of consumption volatility.

Furthermore, both the producer and the consumer are supposed to be risk--averse. The producer is risk--averse because the obligation to serve the consumer's demand is not associated with the possibility to transfer to the consumer all the risks involved with the uncertain generation costs. Besides, there is not necessarily a diversification effect in the aggregation of demand response across many consumers that would lead to compensation of errors. Indeed, large scale demand response event still exhibit a significant variance of consumption across price events. On their side, consumers are reluctant to engage in demand response contract because they fear unexpected high bill due to their uncertain capacity to reduce consumption during price event. For instance, in the case of the Low Carbon London experiment, the enrolled consumers were paid 100~\textsterling~at the beginning of the trial and 50~\textsterling~more if they completed all the trial and were assured not to lose any money because of the form of their contracts. We shall take for both actors an exponential utility function. This utility is known to exhibit independence with respect to initial wealth. Although there exists evidence that the risk--aversion of consumers decreases with wealth, the hypothesis of initial wealth independence is an acceptable approximation in the context of demand response contract where only a small fraction of the total income of the consumer is involved.

\subsection{The consumer}
\label{ssec:consumer}

The Agent consumption process is denoted by $X=\{X_t,t\in[0,T]\}$, and is the canonical process of the space $\Omega$ of scalar continuous trajectories $\omega:[0,T]\longrightarrow\R$, i.e. $X_t(\omega)=\omega(t)$ for all $(t,\omega)\in[0,T]\times\Omega$.  We denote by $\F=\{\Fc_t,t\in[0,T]\}$ the corresponding filtration. A control process for the consumer (the Agent) is a pair $\nu:=(\alpha,\beta)$ of $\F-$adapted processes, which are respectively $
A-$ and $B-$valued. More specifically, $\alpha$ represents the effort of the consumer to reduce the level of consumption and $\beta$ is the responsiveness effort consisting in reducing the consumption variability for each usage of electricity. We emphasise that $\alpha$ and $\beta$ are respectively $N-$ and 
$d-$dimensional vectors, thus capturing the differentiation between different usages, e.g. refrigerator, heating or air conditioning, 
lightning, television, washing machine, computers... This set of control processes is denoted by $\mathcal U$.

\vspace{0.5em}
For a given initial condition $X_0\in\R$, representing the consumption at the beginning of the price event, and some 
control process $\nu:=(\alpha,\beta)$, the controlled equation is defined by the following stochastic differential 
equation\footnote{For technical reasons, we need to consider weak solutions of the stochastic differential equations. 
However, for expositional purposes, we deliberately ignore this technical aspect in this section so as to focus on the main 
message of the present paper.} driven by a $d-
$dimensional Brownian motion $W$
 \begin{equation}
 \label{eq:demand1}
 X_t = X_0 - \int_0^t \alpha_s\cdot {\bf 1}\drm s + \int_0^t \sigma(\beta_s) \cdot \mathrm{d} W_s,~t\in[0,T],\; 
 \mbox{with}\; 
 \sigma(b):=(\sigma_1\sqrt{b_1},\ldots,\sigma_d\sqrt{b_d})^\top,
 \end{equation}
for some given parameters $\sigma_1,\ldots,\sigma_d>0$. All of our utility criteria depend only on the distribution $\P^\nu$ of the 
state process $X$ corresponding to the effort process $\nu$. Let $\Pc$ be the collection of all such measures $\P^\nu$. 

The state variable $X$ represents the consumer's consumption. An effort $\nu$ 
induces a separable cost $c(\nu):=c_1(\alpha)+\frac12 c_2(\beta)$ and a value $f(X)$ from the consumption $X$. Throughout this 
paper, we shall take
\[
 c_1(a) 
 :=
 \frac12 \sum_{i=1}^N\frac{a_i^2}{\mu_i},
 ~\mbox{and}~
 c_2(b)
 :=
 \sum_{j=1}^d \frac{\sigma_j^2}{\lambda_j}\big(b_j^{-1}-1\big),
 ~a\in A,~b\in B,
\]
for some $(\mu,\lambda)\in(0,+\infty)^N\times(0,+\infty)^d$. Notice that $c$ is convex, increasing in $a$ and 
decreasing in $b$ as the responsiveness effort consists in reducing the volatility, thus reproducing the requested effects of 
increasing marginal cost of effort. Moreover, $c_1(0)=c_2(1)=0$ captures the fact that there is no cost for making no effort. The 
cost function is quadratic in $a$, and illustrates that small deviations (as e.g. switching off the light when leaving some place) are 
painless, while large deviations (as e.g. reducing the consumption from heating or air conditioning) are more costly. The function $f$ is  taken to be linear $f(x) = \kappa \, x$. Results are provided in the Appendices \ref{app:cru},  \ref{app:fblinear} and  \ref{app:sblinear} for a general  increasing  value function $f$. 

The Agent controls the electricity consumption by choosing the effort process $\nu$ in the state equation 
\eqref{eq:demand1}, subject to the corresponding cost of effort rate $c(\nu)$, and the consumption value rate $f(X)$. For technical reasons, we need to consider bounded efforts, we then set
\[
 A := \big[0,\mu_1 A_{\rm max}\big]\times\cdots\times\big[0,\mu_N A_{\rm max}\big]
 ~
 \mbox{and}~
 B := [\varepsilon,1],
\]
for some constants $A_{\rm max}>0$ and $\varepsilon>0$.

The execution of the contract starts at $t=0$. The consumer receives the value $\xi$ from the producer at time $T$. The value $\xi$ can be positive (payment) or negative (charge). The producer has no access to the consumer's actions, and does not observe the 
consumer's different usages of electricity. She only observes the overall consumption $X$. Consequently, the 
compensation $\xi$ can only be contingent on $X$, that is $\xi$ is $\Fc_T-$measurable. We denote by $\mathcal C$ the set of $\Fc_T-$measurable random variables. The objective function of the consumer is then defined for all $(\nu,\xi)\in\mathcal U\times\mathcal C$ by
 \begin{equation}\label{JA}
 J_{\rm A}(\xi,\P^\nu)
 :=
 \mathbb E^{\mathbb \P^\nu}\bigg[ U_{\rm A} \bigg( \xi+\int_0^T \big( f(X_s) - c(\nu_s) \big) \drm s\bigg) \bigg],
 \mbox{ where }
 U_{\rm A}(x) := - \mathrm{e}^{-r x},
 \end{equation}
for some constant risk aversion parameter $r > 0$. It is implicitly understood that the limiting case where $r$ tends to zero corresponds to a  risk--neutral consumer. 

The consumer's problem is
 \begin{equation}\label{eq:AgentObj}
 V_{\rm A}(\xi)  
  := 
 \sup_{\P^\nu\in\Pc} J_{\rm A}(\xi,\P^\nu),
 \end{equation}
{\it i.e.} maximising utility from consumption subject to the cost of effort. A control $
\P^{\widehat \nu}\in\Pc$ will be called optimal if $V^{\rm A}(\xi)=J_{\rm A}\big(\xi,\P^{\widehat \nu}\big)$.
We denote by $\Pc^\star(\xi)$ the collection of all such optimal responses $\P^{\widehat \nu}$. We finally assume that the consumer has a reservation utility $R_0 \in \R_{-}$.  We denote by $L_0 := -\frac{1}{r} \log{(-R_0)}$, the certainty equivalent of the reservation utility of the consumer for the consumer.

\subsection{The producer}
\label{ssec:producer}

The producer (the Principal) provides electricity to the consumer, and thus faces the generation cost of the produced energy, and 
the cost induced by the variation of production. Her performance criterion is defined by 
 \begin{equation}\label{eq:JP}
 J_{\rm P}(\xi,\P^\nu) 
 := 
 \E^{\P^\nu}\bigg[U\bigg(- \xi - \int_0^T g(X_s) \mathrm{d}s -  \frac{h}{2} \langle X\rangle_T \bigg) \bigg],
 \; \mbox{with}\; 
 U(x):=-\mathrm{e}^{-px}.
 \end{equation}
Here $p>0$ is the constant absolute risk aversion parameter, $g$ is taken to be linear $g(x) = \theta x$. Results are provided in the Appendices for general  non--decreasing generation cost function. The parameter $h$ is a positive constant representing the {\em direct marginal cost} induced by the quadratic variation $\langle X\rangle$ of the consumption. The higher the volatility of the consumption, the more costly it is for the producer to follow the load curve. Note that due to risk--aversion, the producer bears also an {\em indirect cost} of volatility. 

An $\Fc_T-$measurable random variable $\xi$ will be called a {\it contract}, which we denote by $\xi\in\mathcal C$, if it 
satisfies the additional integrability property
 \begin{equation}\label{contract-growth}
 \underset{\P^\nu\in\Pc}{\sup}\ \E^{\P^\nu}\big[e^{-rm\xi}\big] 
 + \underset{\P^\nu\in\Pc}{\sup}\ \E^{\P^\nu}\big[e^{pm\xi}\big]<+\infty,
 \; \mbox{for some}\;
 m>1.
 \end{equation}
This integrability condition guarantees that the consumer's criterion \eqref{JA} and the principal one \eqref{eq:JP} are well-defined.

\vspace{0.5em}

Throughout this paper, we shall consider the two following standard contracting problems.
\begin{itemize}
\item {\it First best contracting} corresponds to the benchmark situation where the producer has full power to impose a contract to 
the consumer and to dictate the Agent's effort
 \begin{equation}\label{eq:FB}
\Vfb
 :=
 \sup_{(\xi,\P^\nu)\in\Cc\times\Pc}\big\{ J_{\rm P}(\xi,\P^\nu):J_{\rm A}(\xi,\P^\nu)\ge R_0 \big\},
 \end{equation}
\item {\it Second best contracting} allows the consumer to respond optimally to the producer's offer. We follow the standard 
convention in the Principal--Agent literature in the case of multiple optimal responses in $\Pc^\star(\xi)$, that the consumer 
implements the optimal response that is the best for the producer. This leads to the second best contracting problem
 \begin{equation}\label{eq:SB}
 \Vsb 
  := 
 \sup_{\xi \in \Xi }\ \sup_{\P^\nu\in\Pc^\star(\xi)} 
J_{\rm P}(\xi,\P^\nu),
\mbox{ where }
  \Xi
 :=
 \big\{ \xi\in\Cc:V_{\rm A}(\xi)\geq R_0\big\},
 \end{equation}
with the convention $\sup\emptyset=-\infty$, thus restricting the contracts that can be offered by the producer to those $
\xi\in\mathcal C$ such that $\Pc^\star(\xi)\neq\emptyset$. 
\end{itemize}

\subsection{Consumer's optimal response and reservation utility}
\label{ssec:response}

We collect here some calculations related to the consumer's optimal response which will be useful  throughout the paper. 
Following Cvitani\'c et al. (2018) \cite{cvitanic2018dynamic}, we introduce the consumer's Hamiltonian.
 \begin{equation} \label{eq:agentH}
 H(z,\gamma) 
 :=
 H_{\rm m}(z) + H_{\rm v}(\gamma), 
 ~z,\gamma\in\R,
 \end{equation}
where $H_{\rm m}$ and $H_{\rm v}$ are the components of the Hamiltonian corresponding to the instantaneous mean and 
volatility, respectively
 \begin{align}
 H_{\rm m}(z)  := 
 -\inf_{a\in A}\big\{a\cdot\1 z + c_1(a)\big\} ,
 \mbox{ and }
 H_{\rm v}(\gamma) := -\frac12\inf_{b\in B} \big\{c_2(b) -\gamma|\sigma(b)|^2\big\}. 
 \end{align}
Here, $z$ represents the payment rate for a decrease of the consumption and $\gamma$ represents the payment rate 
for a decrease of the volatility of the consumption. Both payment rates can be positive or negative. Given these payments, 
the consumer maximises the instantaneous rate of benefit given by the Hamiltonian to deduce the optimal response $\widehat a(z)
$ on the drift and $\widehat b(\gamma)$ on the volatilities. The following result collects the closed--form expression of the 
optimal responses. We denote $x^- := 0\vee(-x)$, $x\in\R$.

\begin{Proposition} {\rm (Consumer's best--response)} \label{prop:agentresponse}
The optimal response of the consumer to an instantaneous payment rate $(z,\gamma)$ is
\[
 \widehat a_j(z) :=\mu_j (z^-\wedge A_{\rm max}),
 \mbox{ and }
 \widehat b_j(\gamma):= (1\wedge(\lambda_j\gamma^-)^{-\frac{1}{2}}) \vee \varepsilon ,
 ~j=1,\ldots,N,
\]
so that, with $\bar\mu:=\mu\cdot{\bf 1}$, $\widehat\sigma(\gamma):=\sigma\big(\widehat b(\gamma)\big)$, $\widehat 
c_1(z):=
 c_1\big(\widehat a(z)\big)$, and $\widehat 
c_2(\gamma):=
 c_2\big(\widehat b(\gamma)\big)$,
 \[
 H_{\rm m}(z) = \frac12\bar\mu\big(z^-\wedge A_{\rm max}\big)^2
 \mbox{ and }
 H_{\rm v}(\gamma)=-\frac12\Big(\widehat c_2(\gamma)-\gamma\big|\widehat\sigma(\gamma)\big|^2\Big).
\]
\end{Proposition}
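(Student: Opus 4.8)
The plan is to use the additive separability of the consumer's Hamiltonian to split each of the two infima into one--dimensional convex minimisation problems over compact intervals, and to solve each of them explicitly. For the mean component, I would first observe that $a\cdot\1\,z+c_1(a)=\sum_{i=1}^N\big(za_i+a_i^2/(2\mu_i)\big)$ and that $A=\prod_{i=1}^N[0,\mu_iA_{\rm max}]$, so that $\Hm(z)=-\sum_{i=1}^N\inf_{a_i\in[0,\mu_iA_{\rm max}]}\big(za_i+a_i^2/(2\mu_i)\big)$. Each summand is a strictly convex quadratic in $a_i$ with unconstrained minimiser $-\mu_iz$; by convexity the constrained minimiser is the projection of this point onto $[0,\mu_iA_{\rm max}]$, which is $0$ when $z\ge0$ and $\mu_i\big(z^-\wedge A_{\rm max}\big)$ in general. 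This yields $\widehat a_i(z)=\mu_i\big(z^-\wedge A_{\rm max}\big)$, and substituting back while using $\bar\mu=\mu\cdot\1$ returns $\Hm(z)=\frac12\bar\mu\big(z^-\wedge A_{\rm max}\big)^2$ on the range where the upper cap is slack (the cap being only a technical restriction).

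For the volatility component, the same separation gives $c_2(b)-\gamma|\sigma(b)|^2=\sum_{j=1}^d\phi_j(b_j)$ with $\phi_j(b_j):=\frac{\sigma_j^2}{\lambda_j}\big(b_j^{-1}-1\big)-\gamma\sigma_j^2b_j$, to be minimised over $b_j\in[\varepsilon,1]$. I would check that $\phi_j''(b_j)=2\sigma_j^2/(\lambda_jb_j^3)>0$, so each $\phi_j$ is strictly convex on $(0,\infty)$. If $\gamma\ge0$ then $\phi_j'<0$ everywhere, so $\phi_j$ is decreasing and its minimiser is the right endpoint $b_j=1$; if $\gamma<0$ the equation $\phi_j'(b_j)=0$ has the unique positive root $(\lambda_j\gamma^-)^{-1/2}$, and projecting onto $[\varepsilon,1]$ gives $\widehat b_j(\gamma)=\big(1\wedge(\lambda_j\gamma^-)^{-1/2}\big)\vee\varepsilon$, consistent with the $\gamma\ge0$ case where $\gamma^-=0$ forces $\widehat b_j=1$. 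The expression for $\Hv$ then follows by definition of the argmin, i.e.\ by evaluating $-\frac12\big(c_2(b)-\gamma|\sigma(b)|^2\big)$ at $b=\widehat b(\gamma)$, which is precisely $\Hv(\gamma)=-\frac12\big(\widehat c_2(\gamma)-\gamma|\widehat\sigma(\gamma)|^2\big)$.

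The only point requiring genuine care --- where a bare first--order condition would be insufficient --- is the handling of the box constraints: since each coordinate objective is convex on its interval, the constrained optimiser is the metric projection of the stationary point, which is exactly what the truncations $z^-\wedge A_{\rm max}$ and $(1\wedge\,\cdot\,)\vee\varepsilon$ encode. I would therefore make the sign analysis explicit ($z\ge0$ versus $z<0$, and $\gamma\ge0$ versus $\gamma<0$) to identify which face of the box is active, and verify strict convexity on each interval so that the projected point is the unique global minimiser. Once this is done, both closed forms and the resulting values of $\Hm$ and $\Hv$ are obtained by direct substitution, with no further analytic estimates needed.
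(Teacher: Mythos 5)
Your proof is correct, and since the paper states this proposition without any proof (it is presented as an elementary consequence of the definitions), your argument --- coordinate-wise separation of both infima, strict convexity of each one-dimensional objective, and identification of the constrained minimiser as the projection of the stationary point onto the box --- is exactly the computation the statement implicitly relies on. Your caveat about the cap is also well taken, and in fact slightly sharper than the paper's own statement: when $z^->A_{\rm max}$ the exact value is $H_{\rm m}(z)=\bar\mu A_{\rm max}\big(z^--\tfrac12 A_{\rm max}\big)$, which is strictly larger than $\tfrac12\bar\mu\big(z^-\wedge A_{\rm max}\big)^2$, an imprecision that the paper only accounts for later through the vanishing correction term $\eta_A$ in Proposition~\ref{prop:sbnonlinear}.
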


The $z$ payment induces an effort of the consumer on all usages to reduce the average consumption deviation and this effort is inversely
proportional to its cost $1/\mu_i$. The $\gamma$ payment induces an effort only on the usages whose cost $1/\lambda_j$ is lower than the payment. As a first use of the previous notations, we provide the following characterisation of the consumer's behaviour without contract. 

\begin{Proposition} {\rm (Consumer's behaviour without contract)} \label{prop:crulinear}
Let $f(x) = \kappa \, x,$ $x\in\R$, for some $\kappa \geq 0$. Then,  $V_A(0) = U_A(\ell_0)$ where $\ell_0 = \kappa X_0 T + E_0(T)$ and   
\[
 E_0(T) := \int_0^T H_{\rm v}\big(-\gamma(t)\big)  \drm t,
 \mbox{ and }
 \gamma(t) :=  -r \kappa^2 (T-t)^2.
 \]
The consumer's optimal effort on the drift and on each volatility usage are respectively 
 \[
 a^0=0, 
 \text{ and } 
 b_j^0(t) := \varepsilon \vee (1 \wedge \big( \lambda_j |\gamma(t)| \big)^{-\frac{1}{2}}),\; j=1,\dots,d,
 \]
thus inducing an optimal distribution $\P^0$ under which the deviation process follows the dynamics $\mathrm{d}X_t=\widehat{\sigma}
\big(b^0(t)\big)\cdot \mathrm{d}W_t,$ for some $\P^0-$Brownian motion $W$.
\end{Proposition}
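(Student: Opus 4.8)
The plan is to prove the statement by a verification argument anchored on the consumer's Hamiltonian $H=\Hm+\Hv$ and the best--response map of Proposition~\ref{prop:agentresponse}, thereby avoiding any appeal to a viscosity theory for the associated HJB equation. Since $\xi=0$ and $f(x)=\kappa x$, the criterion \eqref{JA} reduces to $J_{\rm A}(0,\P^\nu)=\E^{\P^\nu}\big[U_{\rm A}\big(\int_0^T(\kappa X_s-c(\nu_s))\,\drm s\big)\big]$, so it suffices to produce a deterministic scalar $\ell_0$ such that $J_{\rm A}(0,\P^\nu)\le U_{\rm A}(\ell_0)$ for every $\P^\nu\in\Pc$, with equality for the announced response; monotonicity of $U_{\rm A}$ then yields $V_{\rm A}(0)=U_{\rm A}(\ell_0)$ and identifies the optimiser.

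First I would introduce the candidate value field
\[
\ell(t,x):=\kappa x(T-t)+\int_t^T\Hv\big(\gamma(s)\big)\,\drm s,\quad \gamma(s)=-r\kappa^2(T-s)^2,
\]
which is affine in $x$, with $\ell_x=\kappa(T-t)=:z_t\ge 0$, $\ell_{xx}=0$, $\ell(T,\cdot)=0$ and $\ell(0,X_0)=\ell_0$. Because $z_t\ge 0$ we have $z_t^-=0$, so Proposition~\ref{prop:agentresponse} already gives $\widehat a(z_t)=0$ and $\Hm(z_t)=0$, which is the announced drift effort $a^0=0$. Writing $I_t:=\int_0^t(\kappa X_s-c(\nu_s))\,\drm s$ and computing the dynamics of $Y_t:=I_t+\ell(t,X_t)$ under $\P^\nu$ (in the weak formulation, against the $\P^\nu$--Brownian motion of \eqref{eq:demand1}), the affinity of $\ell$ makes the $X_t$--terms cancel, so that $Y$ has drift $\mu_t=-c(\nu_t)-\Hv(\gamma(t))-(\alpha_t\cdot\1)\kappa(T-t)$ and diffusion coefficient $v_t=\kappa(T-t)\sigma(\beta_t)$.

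The crux is the sign of the drift of $M_t:=U_{\rm A}(Y_t)$. Since $U_{\rm A}(y)=-\mathrm e^{-ry}$, It\^o's formula gives $\drm M_t=r\mathrm e^{-rY_t}\big(\mu_t-\tfrac r2|v_t|^2\big)\drm t+r\mathrm e^{-rY_t}v_t\cdot\drm W_t$, so I must check that $-\mu_t+\tfrac r2|v_t|^2\ge 0$, with equality at the claimed controls. Substituting $\mu_t,v_t$ and using $c=c_1+\tfrac12 c_2$ and $\gamma(t)=-rz_t^2$, this quantity equals
\[
\big[c_1(\alpha_t)+(\alpha_t\cdot\1)z_t\big]+\tfrac12\big[c_2(\beta_t)-\gamma(t)\big|\sigma(\beta_t)\big|^2\big]+\Hv(\gamma(t)).
\]
By the very definitions $\Hm(z)=-\inf_a\{a\cdot\1 z+c_1(a)\}$ and $\Hv(\gamma)=-\tfrac12\inf_b\{c_2(b)-\gamma|\sigma(b)|^2\}$, the first bracket is $\ge-\Hm(z_t)=0$ (equality iff $\alpha_t=\widehat a(z_t)=0$) and the second bracket is $\ge-\Hv(\gamma(t))$ (equality iff $\beta_t=\widehat b(\gamma(t))=b^0(t)$); hence the whole expression is nonnegative and vanishes exactly at $\alpha_t=0$, $\beta_t=b^0(t)$, i.e. under the law $\P^0$ of the statement. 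Thus $M$ is a local supermartingale, and a local martingale under $\P^0$.

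Since $\ell(T,\cdot)=0$ we have $M_T=U_{\rm A}(I_T)$, so once the local (super)martingale property is upgraded to a true one, taking expectations gives $J_{\rm A}(0,\P^\nu)=\E^{\P^\nu}[M_T]\le M_0=U_{\rm A}(\ell(0,X_0))=U_{\rm A}(\ell_0)$ for every $\nu$, with equality under $\P^0$; this proves $V_{\rm A}(0)=U_{\rm A}(\ell_0)$ with $E_0(T)=\int_0^T\Hv(\gamma(s))\,\drm s$ and optimal efforts $a^0=0$, $b^0$. I expect the genuine martingale/uniform--integrability step to be the main obstacle: one must control $\sup_{t\le T}\mathrm e^{-rY_t}$ so that $\int r\mathrm e^{-rY_s}v_s\cdot\drm W_s$ is a true martingale (or survives localisation). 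This is exactly where the admissibility of $\Pc$, the boundedness of the controls ($\alpha\in A$ and $\beta$ valued in $[\varepsilon,1]^d$, so $\sigma(\beta)$ and the costs are bounded), and the Gaussian tails of the bounded--coefficient diffusion $X$ enter, in the spirit of the exponential--moment requirement \eqref{contract-growth}; a convenient cross--check of the bound is the direct Gaussian computation of $\E^{\P^\nu}\big[\mathrm e^{-rI_T}\big]$ for deterministic $\beta$, which reproduces the pointwise minimisation defining $\Hv(\gamma(t))$. It remains to note that $\P^0\in\Pc$, the weak solution with volatility $\sigma(b^0(\cdot))$ (bounded below by $\sqrt\varepsilon\min_j\sigma_j>0$) being well posed, so that the supremum is attained.
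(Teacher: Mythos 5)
Your proposal is correct: the Hamiltonian inequality you verify is exactly the right one, the candidate $\ell(t,x)=\kappa x(T-t)+\int_t^T\Hv(\gamma(s))\,\drm s$ coincides with the paper's affine ansatz $E(t,x)=C(t)x+E_0(t)$, and your supermartingale computation reproduces, control by control, the pointwise minimisation that defines $\Hm$ and $\Hv$. The route differs from the paper's in its packaging rather than its computational core. The paper first establishes a general two-part result (Proposition~\ref{prop:cru}) valid for any concave, non--decreasing, Lipschitz $f$: part (i) characterises the reservation utility as a viscosity solution of the HJB equation after discarding the drift effort by an informal monotonicity argument (``no compensation is offered for this costly effort''), and part (ii) is a verification theorem for classical solutions, proved via It\^o's formula applied to $K^\beta_t\hat R(t,X_t)$ with localisation at $T_n=\inf\{t:|X_t-X_0|\ge n\}$; the proof of Proposition~\ref{prop:crulinear} then plugs in the affine guess, solves the resulting ODEs, and invokes part (ii). You instead run a single, self-contained martingale-optimality argument on $M_t=U_{\rm A}(I_t+\ell(t,X_t))$, which buys two things: you never need the viscosity characterisation (the candidate is explicit and smooth, so part (i) is superfluous for this statement), and you keep the drift control $\alpha$ inside the verification, showing $c_1(\alpha_t)+z_t\,\alpha_t\cdot\1\ge 0=-\Hm(z_t)$ since $z_t=\kappa(T-t)\ge 0$, which turns the paper's heuristic dismissal of drift effort into a rigorous step. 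What the paper's longer route buys is reusability: Proposition~\ref{prop:cru} is invoked again for general nonlinear $f$ (Remark~\ref{rem:R0} and the robustness analysis), so the generality is not wasted there. Two smaller observations: your formula $E_0=\int\Hv(\gamma(s))\,\drm s$ with $\gamma(s)=-r\kappa^2(T-s)^2$ agrees with the appendix computation $E_0(t)=\int_t^T\Hv(-rC^2(s))\,\drm s$ and thus silently corrects the sign slip in the statement (which writes $\Hv(-\gamma(t))$ with the same negative $\gamma$); and the uniform-integrability step you flag as the main obstacle is indeed needed (a negative local supermartingale does not pass to a true one by Fatou alone), but your identification of the ingredients --- bounded controls, hence bounded drift and volatility for $X$, hence Gaussian tails dominating the exponential $\sup_{t\le T}\mathrm{e}^{-rY_t}$ --- is the correct fix and is at the same level of detail as the limit passage in the paper's own proof.
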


\begin{proof} See Appendix \ref{app:cru}. \end{proof}

%

\section{Main results}
\label{sec:results}

We consider the case where 
 \begin{equation}\label{linear:energy}
 (f-g)(x) := \delta x,\; x\in\R.
 \end{equation}
for some constant parameter $\delta := \kappa - \theta$, called hereafter {\em energy value discrepancy}. The case $\delta\ge 0$ corresponds   to off--peak hours while negative $\delta$  corresponds to peak--load hours.

%
%
\subsection{First--best contract}
\label{ssec:linear-FB}

\begin{Proposition}  {\rm (First--best contract)} 
\label{prop:fblinear}
Let $(f-g)(x) = \delta x$. Then: 

\medskip
{\rm (i)} the first--best value function is given by 
\[
\Vfb= U( \bar v(0,X_0) -L_0 ) \quad \text{where} \quad \bar v(t,x) = \delta (T-t) x + \int_t^T \mfb(s) \drm s,
\]
 and  
\begin{align*}
\mfb(t)  \;:=\;  \frac12 \bar \mu  (\delta^-)^2 (T-t)^2 
                       + \Hv\big(- h - \rho \delta^2(T-t)^2\big), \,\,\,\,\, \rho := \frac{rp}{r+p}.
\end{align*}

{\rm (ii)} The consumer's optimal effort is given by 
\[
\afb(t)
 := 
 \mu  \delta^- (T-t), 
 \mbox{ and }  
\bfb(t) 
 := 
 1 \wedge \Big( \lambda_j  ( h + \rho \, \delta^2 (T-t)^2) \Big)^{-\frac{1}{2}},
\]
thus inducing an optimal distribution $\P^{\mbox{\footnotesize\rm\sc fb}}$ under which the deviation process follows the dynamics 
$\mathrm{d}X_t=
-  \mu  \delta^- (T-t) \cdot \1 dt + \sigma\big(\bfb(t)\big)\cdot \mathrm{d}W_t, $ for some $
\P^{\mbox{\footnotesize\rm\sc fb}}-$Brownian motion $W$.

\medskip
 {\rm (iii)} The optimal first--best contract is given by:
  $$
    \xifb
  =
  L_0   
  - \kappa X_0 T  
    + \int_0^T  c(\nu_t) \mathrm{d}t
  + \int_0^T \piefb \big(X_0 - X_t\big) \drm t
  - \frac{1}{2}  \int_0^T  \pivfb  \drm \langle X\rangle_t, 
  $$ 
where 
\[
\piefb := \frac{r}{r+p} \kappa + \frac{p}{r+p} \theta, \quad\quad \pivfb :=  \frac{p}{r+p} h.
\]
\end{Proposition}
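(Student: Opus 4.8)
The plan is to exploit that the first--best problem carries no incentive--compatibility constraint, so it decouples into a pure risk--sharing problem for the transfer $\xi$ (at frozen effort) and a standard control problem for the effort. First I would fix $\P^\nu\in\Pc$ and note that the sum of the two terminal wealths,
\[
 S^\nu := \int_0^T (f-g)(X_s)\drm s - \int_0^T c(\nu_s)\drm s - \frac{h}{2}\langle X\rangle_T,
\]
does not depend on $\xi$: the contract merely redistributes $S^\nu$. Maximising $J_{\rm P}$ over $\xi$ subject to the (binding) participation constraint $J_{\rm A}=R_0$ is then the classical Borch risk--sharing problem for two CARA utilities, whose first--order condition $U_{\rm A}'(Y_{\rm A})/U'(Y_{\rm P})=\text{const}$ forces the pathwise affine rule $Y_{\rm A}=\frac{p}{r+p}S^\nu+k_\nu$, the constant $k_\nu$ being fixed by $J_{\rm A}=R_0$.

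Substituting this rule back, the conditional Principal value becomes a strictly decreasing function of $\E^{\P^\nu}[e^{-\rho S^\nu}]$ with $\rho:=rp/(r+p)$ the aggregate risk aversion of the statement; maximising over the effort thus reduces to the single risk--sensitive control problem $\bar v(0,X_0)=\sup_{\P^\nu\in\Pc}-\frac1\rho\log\E^{\P^\nu}[e^{-\rho S^\nu}]$. Because $\rho(r+p)/r=p$, this immediately gives $\Vfb=U(\bar v(0,X_0)-L_0)$, and it is this computation that explains why $\rho$ governs the whole solution.

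For the control problem I would use dynamic programming. The equation for this exponential criterion, after the standard logarithmic transformation of the linear Feynman--Kac PDE, reads
\[
 0=\partial_t\bar v+\delta x+\Hm(\partial_x\bar v)+\Hv\big(\partial_{xx}\bar v-\rho(\partial_x\bar v)^2-h\big),\quad \bar v(T,\cdot)=0,
\]
where $\Hm,\Hv$ are exactly the Hamiltonians of \eqref{eq:agentH} and the quadratic penalty $-\rho(\partial_x\bar v)^2$ is the signature of the exponential criterion. The affine ansatz $\bar v(t,x)=\delta(T-t)x+\int_t^T\mfb(s)\drm s$ has $\partial_x\bar v=\delta(T-t)$ and $\partial_{xx}\bar v=0$, collapsing the PDE to $\mfb(t)=\Hm(\delta(T-t))+\Hv(-h-\rho\delta^2(T-t)^2)$, i.e. the stated formula once $\Hm(z)=\tfrac12\bar\mu(z^-\wedge A_{\rm max})^2$ from Proposition \ref{prop:agentresponse} is inserted. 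Evaluating the maximisers of $\Hm,\Hv$ at $z=\partial_x\bar v$ and $\gamma=\partial_{xx}\bar v-\rho(\partial_x\bar v)^2-h$ returns the best responses $\widehat a,\widehat b$, hence the deterministic efforts $\afb,\bfb$; determinism holds because $\partial_x\bar v$ depends on $t$ only.

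I would then recover the contract by inverting the sharing rule: from $Y_{\rm A}=\xi+\int_0^T f(X_s)\drm s-\int_0^T c(\nu_s)\drm s=\frac{p}{r+p}S^\nu+k_\nu$ one solves for $\xi$ and rewrites $\int_0^T X_s\drm s=X_0T-\int_0^T(X_0-X_s)\drm s$ to reach the rebate form with baseline $X_0$. The coefficient of $(X_0-X_t)$ is $\piefb=\kappa-\tfrac{p}{r+p}\delta=\frac{r}{r+p}\kappa+\frac{p}{r+p}\theta$, the convex combination weighted by the risk--aversion ratios, while the coefficient of $\drm\langle X\rangle_t$ is the Principal's volatility share $\pivfb=\frac{p}{r+p}h$; the additive constant is fixed by the binding participation constraint. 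The main obstacle is rigour, not algebra: justifying the pathwise Borch rule and the interchange of $\sup_{\Pc}$ with the risk--sharing step in the weak formulation; proving $\bar v$ is the true control value by an It\^o verification argument, using boundedness of $A,B$ and the integrability condition \eqref{contract-growth} to control the exponential terms and to ensure $\xifb$ is admissible; and confirming optimality of the deterministic Markovian effort within all of $\Pc$. Once admissibility and the verification inequality hold, optimality of $(\xifb,\P^{\mbox{\footnotesize\rm\sc fb}})$ and the binding of the participation constraint follow together.
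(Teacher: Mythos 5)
Your proposal follows essentially the same route as the paper's Appendix~\ref{app:fblinear}: the pathwise Borch first--order condition you invoke is exactly the paper's Karush--Kuhn--Tucker condition in $\xi$ (the paper carries a Lagrange multiplier $\ell$ and optimises it at the end, where you pin the additive constant $k_\nu$ by the binding participation constraint --- an equivalent bookkeeping), both reduce the effort choice to the risk--sensitive control problem $\sup_{\P^\nu}\E^{\P^\nu}\big[-e^{-\rho S^\nu}\big]$ with aggregate risk aversion $\rho=rp/(r+p)$, and both solve it via the same HJB equation with the same affine ansatz before rearranging $\int_0^T X_s\,\drm s = X_0T-\int_0^T(X_0-X_s)\,\drm s$ into the rebate form. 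The resulting prices $\piefb$ and $\pivfb$ and the optimal efforts agree with the paper's proof, so the proposal is correct and essentially identical in method.
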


\noindent {\bf Proof}: See Appendix~\ref{app:fblinear}. $\Box$

\hspace{5mm}

The first--best contract admits the form of a rebate contract: the consumer is paid his certainty equivalent reservation utility minus the average value of the energy he would consume if no effort is made ($\kappa T X_0$) 
\[
\xifbf :=   L_0     - \kappa T X_0,
\]
 and then, a variable payment consisting on the cost of his efforts, which are observable and enforced in the first--best, plus a part proportional to the difference between $X_0$ and a charge for volatility variation
\[
\xifbv :=      \int_0^T  c(\nu_t) \drm t
  + \int_0^T \piefb \big(X_0 - X_t\big) \drm t
  - \frac{1}{2}  \int_0^T  \pivfb  \drm \langle X\rangle_t.
\]
 
This form is a rebate contract where $X_0$ serves as baseline. 

If the consumer is risk--neutral, the first--best contract transfers all the uncertainty of the generation cost to the consumer as it is standard in the moral hazard optimal contract framework. The first--best price for energy is simply the weighted sum by their risk--aversion of the energy values for the consumer and the producer. The first--best price for the responsiveness is a constant fraction of the direct cost of volatility if the consumer is risk--averse and it is constant. As a consequence, we see that a contract that would be indexed only on the information of the cost function of the producer is optimal only in the case when the consumer is risk--neutral. The economic intuition that the marginal cost of generation triggers a socially optimal response is correct only if the consumer is risk--neutral. If not, the consumers needs a compensation payment for the risk he takes in the contract.

Regarding the induced behaviour of the consumer on the volatilities, reduction is performed only on those usage for which the marginal cost of effort measured by $1/\lambda_j$ is lower than the marginal cost of volatility for the producer measured by $h + \rho \delta^2 (T-t)^2$. It is only in the cases where the producer or the consumer is risk--neutral or if they agree on the energy value ($\delta = 0$) that responsiveness is triggered by the mere comparison of $h$ and the $\lambda_j$.

\subsection{Second--best contract}
\label{ssec:sblinear}

\begin{Proposition} {\rm (Second--best contract)} \label{prop:sblinear}
Let $(f-g)(x) = \delta x$. Then: 

\medskip
{\rm (i)} the producer's second--best value function is given by $\Vsb= U\big( v(0,X_0) - L_0 \big)$, with certainty equivalent function 
\[
v(t,x) = \delta (T-t) x + \int_t^T \msb(s) \drm s,
\]
 where  
\begin{align*}
\msb(t)  \;:=\;  \frac12 \bar \mu  \delta^2 (T-t)^2 
                   - \frac12 \inf_{z\in\R} \big\{  \bar\mu(z^-+ \delta(T-t))^2  -2 \Hm\big( - q(z)\big) \big\}.
\end{align*}
where $q(z) := h +r z^2+p(z- \delta(T-t))^2.$\\
{\rm (ii)}  The optimal payment rates are 
the deterministic functions 
\[
 \zsb(t)
 = 
 \text{{\rm Arg}}\min_{\!\!\!\! z\in\R} \big\{  \bar\mu(z^-+\delta(T-t))^2 -2 \Hm\big( -  q(z) \big)  \big\}, 
~\mbox{and}~
\gsb(t) = - q(\zsb(t)).                                             
\]
{\rm (iii)}  the second--best optimal contract is given by $\xisb = \xisbf + \xisbv$  where
 \begin{align*}
 \xisbf & := L_0
- \kappa T X_0 
- \int_0^T H(\zsb,\gsb)(t) \drm t, \quad
 \xisbv := 
  \int_0^T \piesb(t) \big(X_0 - X_t\big)  \drm t
 - \frac12  \int_0^T \pivsb(t) \drm \langle X\rangle_t, 
\end{align*}
and
\[
\piesb (t) := \kappa + \zsbdot(t), \quad\quad \pivsb (t) :=  h + p \big(\zsb(t) - \delta (T-t) \big)^2.
 \]
{\rm (iv)}  The induced dynamics of the consumption reduction is
\[
\drm X_t = - \widehat a(\zsb(t)) \drm t + \widehat \sigma(\gsb (t) ) \cdot \drm W_t, 
\] for some $\P^{\mbox{\footnotesize\rm\sc sb}}-$Brownian motion $W$
\end{Proposition}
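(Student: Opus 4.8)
The plan is to apply the dynamic--programming methodology of Cvitani\'c et al. (2018) \cite{cvitanic2018dynamic}, which was designed exactly for Principal--Agent problems in which the Agent also controls the diffusion coefficient (here the responsiveness effort $\beta$). The first step is to \emph{reduce the abstract second--best problem \eqref{eq:SB} to a standard stochastic control problem for the producer}. Following \cite{cvitanic2018dynamic}, I would characterise the consumer's continuation certainty equivalent $Y$ as the solution of a backward equation driven by the Hamiltonian $H$ and two control processes $(Z,\Gamma)$ representing the sensitivities of the payment to $X$ and to $\langle X\rangle$, with terminal value $Y_T=\xi$; conversely, every contract in $\Cc$ satisfying \eqref{contract-growth} is generated in this way from some admissible $(Z,\Gamma)$ and some scalar $Y_0$. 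By the consumer's best response (Proposition~\ref{prop:agentresponse}), the value of such a contract is $U_{\rm A}(Y_0)$, attained by the Markovian effort $(\widehat a(Z_t),\widehat b(\Gamma_t))$, so the participation constraint $V_{\rm A}(\xi)\ge R_0$ becomes $Y_0\ge L_0$; since the producer always prefers to pay less, the optimum sets $Y_0=L_0$. This leaves the producer free to choose only $(Z,\Gamma)$, with the state evolving as $\drm X_t=-\widehat a(Z_t)\,\drm t+\widehat\sigma(\Gamma_t)\cdot\drm W_t$ under the induced measure.

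Next I would insert this parametrisation into the producer's criterion \eqref{eq:JP}. Using $(f-g)(x)=\delta x$, the producer's terminal wealth becomes an explicit functional of $(Z,\Gamma,X)$, and because both parties have CARA utilities I would solve the resulting control problem by the ansatz that the producer's value function is $U\big(v(t,X_t)-L_0\big)$ with $v$ \emph{affine} in the state, $v(t,x)=\delta(T-t)x+\int_t^T m(s)\,\drm s$. Substituting into the Hamilton--Jacobi--Bellman equation, affineness forces $\partial_x v=\delta(T-t)$ and makes the $x$--dependence cancel, so the HJB collapses to a \emph{pointwise} optimisation over $(z,\gamma)$ at each time, whose value determines $m(t)$. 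The essential feature of this computation is how the two risk aversions enter: the producer's CARA penalty acts on the net diffusion $\big(\delta(T-t)-z\big)\widehat\sigma(\gamma)$, contributing $p\big(z-\delta(T-t)\big)^2$ per unit variance, while the consumer's CARA penalty on the incentive exposure $z\,\widehat\sigma(\gamma)$ contributes $r z^2$; together with the direct cost $h$ these produce the effective per--variance volatility cost $q(z)=h+rz^2+p(z-\delta(T-t))^2$ of the statement.

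Eliminating $\gamma$ is then explicit: the volatility effort responds only to this effective cost, so the optimal volatility payment is $\gsb(t)=-q(\zsb(t))$ and the value of the volatility sub--problem is $\Hv(-q(z))$, exactly as the first--best formula of Proposition~\ref{prop:fblinear} uses $\Hv$ at an effective price. The pointwise optimisation thus reduces to the scalar problem
\begin{equation*}
\msb(t) \;=\; \tfrac12\bar\mu\,\delta^2(T-t)^2 \;-\; \tfrac12\inf_{z\in\R}\Big\{\bar\mu\big(z^-+\delta(T-t)\big)^2 - 2\,\Hv\big(-q(z)\big)\Big\},\qquad q(z)=h+rz^2+p\big(z-\delta(T-t)\big)^2,
\end{equation*}
yielding $\zsb(t)$ as the minimiser and $\gsb(t)=-q(\zsb(t))$, which proves (i) and (ii), and the induced dynamics (iv) is read directly off the best response. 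Finally, to obtain the contract form (iii) I would substitute the optimal deterministic $(\zsb,\gsb)$ into the representation and integrate by parts in $\int_0^T\zsb(t)\,\drm X_t$: this converts the stochastic integral into the rebate term $\int_0^T\piesb(t)(X_0-X_t)\,\drm t$ and, together with the $-\kappa\int_0^T X_t\,\drm t$ contribution, produces the energy price $\piesb(t)=\kappa+\zsbdot(t)$, while regrouping the $\drm\langle X\rangle$ terms gives $\pivsb(t)=h+p(\zsb(t)-\delta(T-t))^2$; collecting the constant and $H$--terms yields $\xisbf$, and hence $\xisb=\xisbf+\xisbv$.

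I expect two main obstacles. The first is the \emph{rigorous justification of the representation step in the presence of volatility control}: unlike the classical drift--only case, one must work in the weak formulation, verify that the family generated by $(Z,\Gamma)$ exhausts the admissible contracts without loss of generality, and check that the optimal $(\zsb,\gsb)$ produce a contract satisfying the integrability requirement \eqref{contract-growth} and admitting $\Pc^\star(\xi)\neq\emptyset$; this is precisely the part for which the machinery of \cite{cvitanic2018dynamic} is indispensable. The second is the \emph{coupling} inside the scalar minimisation: because the single rate $z$ simultaneously sets the drift incentive and enters the volatility penalty through $q(z)$, the problem does not separate into independent mean and volatility parts, which is exactly why the second--best value stays strictly below the first--best one (compare $q(z)$ with the decoupled $h+\rho\delta^2(T-t)^2$ of Proposition~\ref{prop:fblinear}). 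Verifying that this scalar problem has a well--defined minimiser that is regular enough in $t$ for $\zsbdot$ to exist, and that the candidate $v$ genuinely solves the HJB equation (a verification argument), is the analytic crux.
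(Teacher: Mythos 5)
Your proposal follows essentially the same route as the paper's own proof: the reduction via Cvitani\'c et al.\ (2018) to a control problem over $(Z,\Gamma)$ with $Y_0=L_0$, the elimination of $\gamma$ through the effective per--variance cost $q(z)=h+rz^2+p(z-\delta(T-t))^2$ (this is exactly the content of the paper's Lemma~\ref{lem:F0}, which shows $\inf_{\gamma\le 0}\big\{q\,|\widehat\sigma(\gamma)|^2+\widehat c_2(\gamma)\big\}=-2\Hv(-q)$, attained at $\gamma=-q$), the affine ansatz collapsing the HJB of the general Proposition~\ref{prop:sbnonlinear} to the scalar minimisation in $z$, and integration by parts in $\int_0^T\zsb(t)\,\drm X_t$ to produce the rebate form with $\piesb(t)=\kappa+\zsbdot(t)$. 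One remark: your writing of the reduced problem with $\Hv\big(-q(z)\big)$ is the internally consistent one --- the $\Hm\big(-q(z)\big)$ appearing in the proposition's display is a typo, as confirmed by Lemma~\ref{lem:F0} and the PDE \eqref{eq:vbar}.
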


\begin{proof} See Appendix~\ref{app:sblinear} \qed\end{proof}


\hspace{5mm}

Similar to the first--best case, the second--best optimal contract has the form of a rebate contract. But, in the fixed part payment, the producer no longer charges the real cost of the consumer's effort, which is not observable, but the net benefit of his optimal efforts. In peak periods, the second--best payment rates are obtained up to a scalar optimisation (Proposition~\ref{prop:sblinear}~(ii)). If the consumer is risk--neutral, we can obtain that:
\[ \xisbv =  \int_0^T \theta (X_0 - X_t) \drm t  - \frac12  \int_0^T  h \, \, \drm \langle X\rangle_t. \] 
The optimal prices are the marginal costs of energy and volatility. Apart from this limiting case, the optimal price for energy is non--constant, and non--linear (see Section~\ref{sec:numeric} for illustration). 

We have seen in the Introduction that demand response contracts written in rebate form are susceptible of baseline manipulation. If we define baseline manipulation as an artificial increase of the baseline consumption to obtain a higher utility, then the optimal contract given in Proposition~\ref{prop:sblinear} does not suffer from this drawback when the consumer's reservation utility is expressed in absolute terms. In this case, whatever the initial consumption level $X_0$ used as a baseline, the consumer gets no more than the certainty equivalent $L_0$ is asked for. 

\hspace{5mm}

During off--peak periods, it is possible to obtain more explicit result for Proposition~\ref{prop:sblinear}. 

\begin{Corollary} \label{prop:RespDeltaPos}
Suppose. $(f-g)'=\delta \ge 0$. Then:
\\
{\rm (i)} the optimal payments rate are deterministic functions of time given by 
\[
 \zsb(t)
 = 
 \frac{p}{r+p} \delta (T-t), 
 \mbox{ and }
 \gsb(t)
=
- h - \rho \, \delta^2 (T-t)^2
\]
 and the second--best optimal contract of Proposition~\ref{prop:sblinear} {\rm (iii)} is defined by  the constant prices
\[
\piesb := \frac{r}{r+p} \kappa + \frac{p}{r+p} \theta, \quad \pivsb := h + \rho \frac{r}{r+p} \, \delta^2 (T-t)^2.
\]

\noindent {\rm (ii)} the consumer's optimal effort on the drift and the volatility of the consumption deviation is
\[
 \widehat a\big( \zsb \big)=0,
 \mbox{ and }
 \widehat b_j\big(\gsb \big) = 1\wedge \Big(\lambda_j  \gsb(t)\Big)^{-\frac{1}{2}},
\]
so that the optimal response of the consumer induces the optimal probability distribution $\P^{\mbox{\footnotesize\rm\sc sb}}$ 
such that $\mathrm{d}X_t = \widehat \sigma(\gsb ) \cdot \mathrm{d}W_t, $ for some $\P^{\mbox{\footnotesize\rm\sc sb}}-$Brownian motion $W$.
\end{Corollary}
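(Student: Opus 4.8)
The statement is the specialisation of Proposition~\ref{prop:sblinear} to the off--peak regime $\delta\ge0$, so the plan is to carry out the scalar optimisation of Proposition~\ref{prop:sblinear}(ii) in closed form and then substitute into the price and best--response formulas. Concretely, I would fix $t\in[0,T]$, abbreviate $k:=\delta(T-t)\ge0$, and study the infimand
\[
\Phi(z):=\bar\mu\,(z^-+k)^2-2\Hv\big(-q(z)\big),\quad q(z)=h+rz^2+p(z-k)^2,
\]
whose minimiser is $\zsb(t)$ and whose minimiser value determines $\gsb(t)=-q(\zsb(t))$. Everything in parts (i) and (ii) then follows by plugging these two quantities into the already--established formulas.

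The crux is locating the minimiser. First I would note that $q$ is a strictly convex quadratic in $z$ with unconstrained vertex $z^\star=\frac{p}{r+p}k\ge0$ and minimal value $q(z^\star)=h+\frac{rp}{r+p}k^2=h+\rho\,\delta^2(T-t)^2$, where $\rho=\frac{rp}{r+p}$. On the half--line $\{z\ge0\}$ one has $z^-=0$, so the mean term $\bar\mu k^2$ is constant and $\Phi(z)=\bar\mu k^2-2\Hv(-q(z))$; since $\gamma\mapsto\Hv(\gamma)$ makes the volatility response more rewarding as the volatility payment grows, $\Hv(-\,\cdot\,)$ is nonincreasing in $q$, and minimising $\Phi$ over $z\ge0$ is equivalent to minimising $q$, giving $z^\star$. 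It then remains to exclude $z<0$: there $z^-=-z>0$ forces $(z^-+k)^2>k^2$ while simultaneously $q(z)>q(0)$ (as $0\le z^\star$ and $q$ is decreasing on $(-\infty,z^\star]$), so both terms of $\Phi(z)$ dominate those of $\Phi(0)$ and hence $\Phi(z)>\Phi(0)\ge\Phi(z^\star)$. This yields $\zsb(t)=\frac{p}{r+p}\delta(T-t)\ge0$ and $\gsb(t)=-q(\zsb(t))=-h-\rho\,\delta^2(T-t)^2$, as asserted in (i).

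The remainder is substitution. Since $\zsb$ is affine in $t$ with slope $\zsbdot=-\frac{p}{r+p}\delta$, the formula $\piesb=\kappa+\zsbdot$ of Proposition~\ref{prop:sblinear}(iii) gives the constant $\piesb=\kappa-\frac{p}{r+p}\delta=\frac{r}{r+p}\kappa+\frac{p}{r+p}\theta$ via $\delta=\kappa-\theta$, and $(\zsb-\delta(T-t))^2=\frac{r^2}{(r+p)^2}\delta^2(T-t)^2$ turns $\pivsb=h+p(\zsb-\delta(T-t))^2$ into $\pivsb=h+\rho\frac{r}{r+p}\delta^2(T-t)^2$. For (ii), Proposition~\ref{prop:agentresponse} evaluated at $(\zsb,\gsb)$ gives $\widehat a(\zsb)=0$ because $\zsb\ge0$ forces $(\zsb)^-=0$, and $\widehat b_j(\gsb)=1\wedge(\lambda_j|\gsb|)^{-1/2}$ up to the floor $\varepsilon$; the induced dynamics then have vanishing drift, $\drm X_t=\widehat\sigma(\gsb)\cdot\drm W_t$.

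The main obstacle is the first part: justifying that the minimiser lies in $\{z\ge0\}$ so that $z^-$ drops out — this is precisely the mechanism behind the absence of drift effort that characterises the off--peak case. The nonsmoothness of $z\mapsto z^-$ at the origin and the capping of the volatility response $\widehat b$ at $\varepsilon$ and $1$ require a little care, but the caps only affect the explicit form of $\widehat b_j(\gsb)$ (hence the floor $\varepsilon$ in the statement) and not the location of $\zsb$, while the two--sided comparison above resolves the kink at $z=0$ cleanly. Everything else is algebraic bookkeeping with $\rho$ and $\delta$.
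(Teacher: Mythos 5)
Your proposal is correct and follows essentially the same route as the paper: the paper proves the corollary by citing Proposition~\ref{prop:sbnonlinear}, whose proof establishes the minimiser $\zsb=\frac{p}{r+p}v_x$ for $v_x\ge0$ via exactly your mechanism (monotonicity of $F_0(q)=-2\Hv(-q)$ from Lemma~\ref{lem:F0} combined with the vertex of the quadratic $q$), followed by the same substitutions into the price and best--response formulas. You have merely unfolded that citation and re-derived the key monotonicity step inline in the linear case $v_x=\delta(T-t)$, handling the kink at $z=0$ with an equivalent two-sided comparison.
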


\begin{proof}  (ii) Applying Proposition~\ref{prop:RespDeltaPos} and Proposition~\ref{prop:sbnonlinear} (iii)
 to the case where $\delta \ge 0$ gives directly that $\zsb = \frac{p}{r+p} v_x$ $=$ $\frac{p}{r+p} \delta (T-t)$ and thus $\gsb$ 
follows.  Further, applying Proposition~\ref{prop:sbnonlinear} (iv) together with the expression of $\zsb$ gives directly $\xisb$. (iii) The 
application of Proposition~\ref{prop:agentresponse} gives the result. Collecting the former results together with 
Proposition~\ref{prop:RespDeltaPos} (i) and our guess, gives the expression of $\Vsb$ in (i).
\qed\end{proof}

\hspace{5mm}

In off--peak periods, the price for energy is lower than its value for the consumer $\kappa$. Thus, the consumer has no incentives to sell at $\piesbm$ a good that is worth $\kappa$ for him. The consumption is a martingale. Nevertheless, the payment rate on the drift $\zsb(t)$ is positive and not zero, as one would have expected. The producer is paying the consumer when the consumption increases. While this finding seems to be in contradiction with the producer's objective, it is explained as follows. The payment rate for the volatility is $\gsb(t) + r \zsb^2(t)$ $=$ $h + p (\zsb(t) - \delta (T-t))^2  + r \zsb^2(t),$ and thus, it depends on the payment rate on the drift. Taking $\zsb \equiv 0$ is not optimal. The producer is better off paying an increase of consumption at a price higher than her marginal cost in exchange for a greater decrease of the volatility.

Regarding the price of volatitity, we note that it is higher than its direct marginal cost for the producer and that is not constant. The producer requires more effort at the begining of the period than at the end. 

Further, the risk--sharing process between the producer and the consumer may lead to an increase in the observed volatility of consumption. To see that, consider the limiting case where $\delta =0$ and $h=0$. Then, the optimal payment $\gsb$ is zero which means that the producer does not provide responsiveness incentives, even if both players are risk--averse. In this situation, the observed  volatility is equal to the nominal volatility. But, it might happen that this quantity is greater than the observed volatility before contracting. If the consumer is sufficiently risk--averse, he might have performed some efforts to reduce the nominal volatility. In fact, there is a large set of parameters value for which the producer is taking the volatility risk for the consumer. From a risk--sharing point  of view, this means that optimal contracting allows the system to bear more risk. This phenomenom is illustrated in Figure~\ref{fig:totalSigma}.

\begin{figure}[bht!]
\begin{center}
\includegraphics[width=0.5\textwidth]{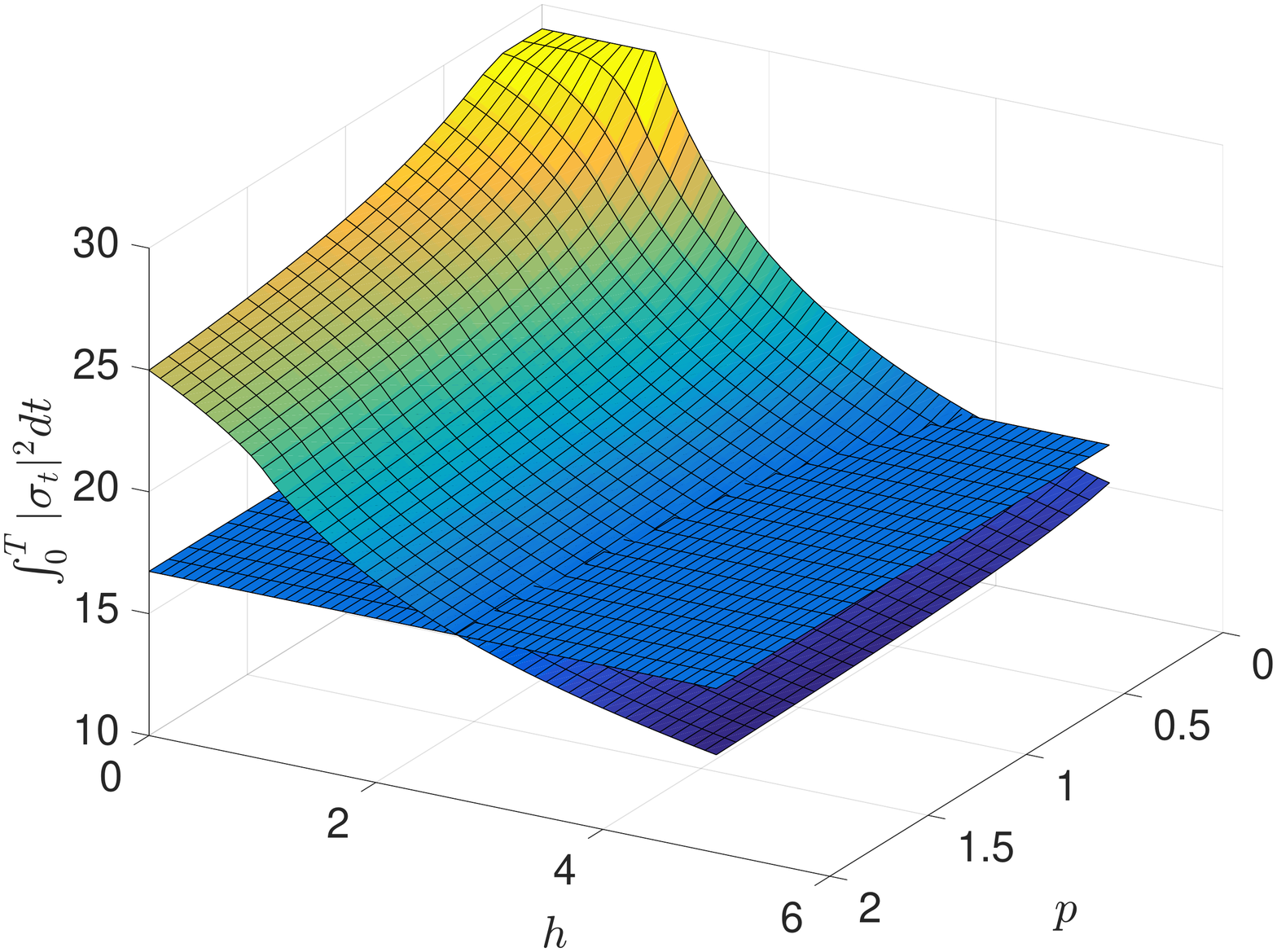}\includegraphics[width=0.5\textwidth]{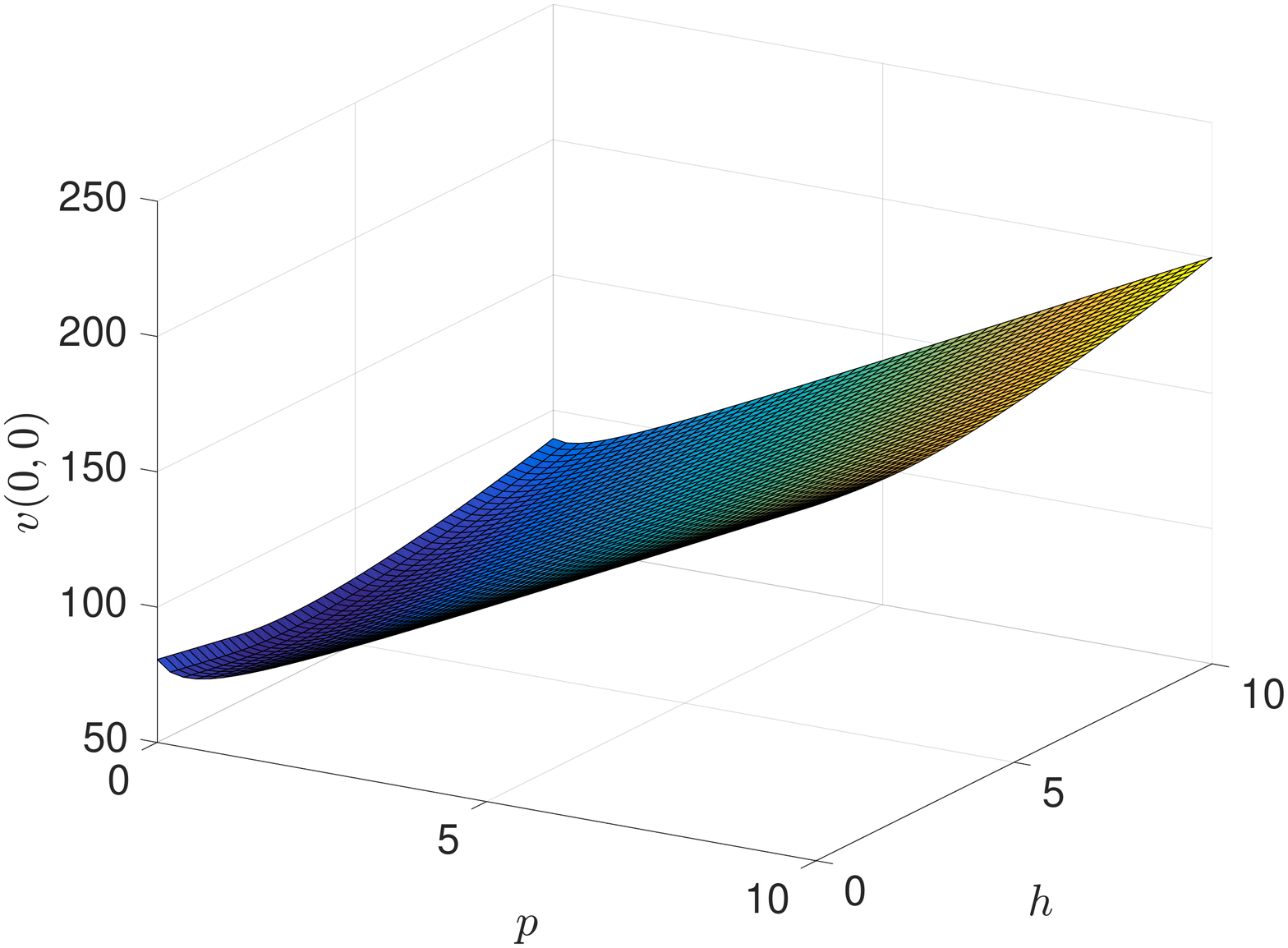} 
\caption{(Left) Total volatility of consumption deviation under optimal contract as a function of the direct volatility cost $h$ and the 
risk-aversion parameter $p$ of the consumer compared to the total volatility without contract (flat surface). (Right) Certainty 
equivalent of the producer with contract and without contract as a function of the direct volatility cost $h$ and the risk-aversion 
parameter $p$. parameters value:  two usages, $T$ $=$ $1$, $\mu$ $=$ $(2,5)$, $\sigma$ $=$ $(5,2)$, $\lambda$ $=$ 
$(0.5,0.1)$, $\kappa$ $=$ $5$, $\delta$ $=$ $3$.}
        \label{fig:totalSigma}
        \end{center}
\end{figure}

Figure~\ref{fig:totalSigma} shows the variation of the observed total volatility of consumption deviation and the benefit from the 
contract as a function of the direct marginal cost of volatility $h$ and the risk--aversion parameter of the producer $p$ when $\delta >0$. For a risk neutral producer with zero marginal direct cost of volatility, the producer requires no effort from the 
consumer and the volatility is equal to the nominal volatility. For a risk neutral producer with increasing direct marginal cost of 
volatility, we observe that volatility remains constant until the value of $h$ gets higher than a certain threshold, namely the lower 
cost of effort for volatility reduction of the consumer. Then, the producer starts to require an effort from the consumer. Then, the 
higher the value of $h$, the lower the observed volatility until an effort is required that reduces the volatility below its value before 
contracting. If we fix the value of $h$ and make the producer increasingly risk averse, we also observe a progressive yet slower 
reduction of volatility because the indirect cost of volatility is much lower than its direct cost. Further, we observe that the gain from 
the contract is a non--decreasing function of $p$ but a non-monotonic function of $h$. Indeed, even if the benefit from the contract 
is always positive, the producer who has a small direct marginal cost does not induce any 
effort from the consumer and thus, cannot reap any increased benefit from the behaviour of the consumer.

\subsection{Comparisons}

In order to assess the benefits from responsiveness incentives, we consider the situation where the producer does not provide incentives on the responsiveness of the consumer, although he could reduce his consumption's volatility if properly incentivised. We consider the  problem~\eqref{eq:SB} of the producer where the contracts are limited to incentives indexed on the observed consumption and not on its volatility. Proposition~\ref{prop:sblinear} in Appendix~\ref{app:sblinear} gives a precise meaning of this sub--class of contracts. We denote by $\Vsbm$ the value of the producer's problem in this case. In this situation, the behaviour of the consumer without contracting is still given by Proposition~\ref{prop:crulinear}.

\begin{Proposition}[Second--best without responsiveness incentives] \label{prop:nogamma} 
Assume that $f-g = \delta x$. Then:\\
(i) $\Vsbm  =  U\big( w(0,X_0) - L_0\big)$ where 
\[
w(t,x) = \delta (T-t) x + \int_t^T \msbm(s) \drm s,
\] 
 and  
\begin{align*}
\msbm(t)  \;:=\;  \frac12 \bar \mu  \delta^2 (T-t)^2 
                   - \frac12 \big\{  q(\zsbm) |\sigma|^2
                   + \bar\mu( \zsbm^- + \delta(T-t))^2 \big\},
\end{align*}
with $q(z) := h +r z^2+p(z- \delta(T-t))^2.$\\
(ii) the second--best optimal payment rate is $\zsbm(t) = \Lambda \delta (T-t) $ with 
 $\Lambda$ 
$ :=$  
$ \frac{p |\sigma|^2+\bar \mu \1_{\{\delta<0\}}} {(p+r) |\sigma|^2+ \bar \mu \1_{\{\delta<0\}}}.$\\                        
(iii) the second--best optimal contract is given by $\xisbm = \xisbmf + \xisbmv$ where
$$
 \xisbmf
 =
L_0
 - \kappa  T X_0 
 + \frac12  \int_0^T r \zsbm^2 (t) |\sigma|^2 \drm t
 - \int_0^T H_{\rm m}(\zsbm(t)) \drm t, \quad \xisbmv =  \int_0^T\piesbm \big(X_0 - X_t\big) \drm t,
$$
and the price for energy $\piesb$ and the price for volatility $\pivsbm$ are given by:
\[
\piesbm := (1-\Lambda) \kappa + \Lambda \theta, \quad \pivsbm := 0.
\]
\end{Proposition}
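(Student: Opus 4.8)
The plan is to adapt the verification argument behind the unrestricted second--best problem (Proposition~\ref{prop:sblinear}, Appendix~\ref{app:sblinear}) to the sub--class of contracts that are not indexed on the quadratic variation. Following Cvitani\'c et al. (2018), I would parametrise such a contract by a single scalar drift--incentive process $Z$ --- the sensitivity of the consumer's continuation certainty equivalent $Y$ to the state $X$ --- with the volatility--sensitivity coefficient frozen at $0$. Concretely, $Y$ solves $\drm Y_t = -\big[f(X_t) + \Hm(Z_t) - \tfrac r2 Z_t^2|\sigma|^2\big]\drm t + Z_t\,\drm X_t$, with $Y_T=\xi$ and, since the participation constraint binds, $Y_0=L_0$; here $-\tfrac r2 Z_t^2|\sigma|^2$ is the risk cost borne by the risk--averse consumer from his exposure $Z_t\,\drm X_t$, evaluated at the nominal volatility. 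Because no responsiveness incentive is offered, in this sub--class the consumer does not reduce his volatility, which stays at $|\sigma|^2$, so his optimal response reduces to the drift effort $\widehat a(Z)$.

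Next I would substitute this representation into the producer's criterion~\eqref{eq:JP} and run the dynamic programming argument as in the unrestricted case, with the linear ansatz $w(t,x) = \delta(T-t)x + \int_t^T\msbm(s)\,\drm s$. Then $w_x = \delta(T-t)$, $w_{xx}=0$, and the exponential--utility Hamilton--Jacobi--Bellman equation collapses, after the terms in $\delta x$ cancel, to the pointwise scalar maximisation
\[
\msbm(t) = \sup_{z\in\R}\Big\{ -\tfrac12\bar\mu\, (z^-)^2 - \bar\mu\,\delta(T-t)\,z^- - \tfrac12\, q(z)\,|\sigma|^2 \Big\},\qquad q(z)=h+rz^2+p\big(z-\delta(T-t)\big)^2,
\]
which is exactly the announced $\msbm$ once one rewrites the maximisation as the minimisation of $q(z)|\sigma|^2 + \bar\mu(z^-+\delta(T-t))^2$ plus the constant $\tfrac12\bar\mu\delta^2(T-t)^2$. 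The $rz^2|\sigma|^2$ contribution to $q$ is the consumer's risk term, the $p(z-\delta(T-t))^2|\sigma|^2$ term the producer's risk penalty, and $h|\sigma|^2$ the direct volatility cost.

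The scalar problem is then solved in closed form. The objective is smooth and strictly concave on each of the half--lines $\{z\ge0\}$ and $\{z\le0\}$, the only non--smoothness being the kink of $z\mapsto z^-$ at the origin; writing the first--order condition on each half--line, the $\bar\mu$--term is active precisely when the maximiser is negative. Since the candidate maximiser equals $\Lambda\delta(T-t)$ with $\Lambda\in(0,1)$, this happens exactly in peak periods $\delta<0$, which produces the indicator in $\Lambda = \big(p|\sigma|^2+\bar\mu\1_{\{\delta<0\}}\big)\big/\big((p+r)|\sigma|^2+\bar\mu\1_{\{\delta<0\}}\big)$ and hence $\zsbm(t)=\Lambda\delta(T-t)$. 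Substituting back yields $\msbm$ and $\Vsbm = U(w(0,X_0)-L_0)$. To obtain the contract I would insert $Z=\zsbm$ into $\xi=Y_T$ and integrate $\int_0^T Z_t\,\drm X_t$ by parts: with $f(x)=\kappa x$, the boundary and drift terms reorganise $-\kappa\int_0^T X_t\,\drm t + \int_0^T Z_t\,\drm X_t$ into $-\kappa T X_0 + \int_0^T\piesbm(X_0-X_t)\,\drm t$, leaving the deterministic remainder $\xisbmf$, so that one reads off $\piesbm=(1-\Lambda)\kappa+\Lambda\theta$ and $\pivsbm=0$.

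The step I expect to be most delicate is the rigorous justification that forbidding indexation on $\langle X\rangle$ amounts exactly to taking the volatility sensitivity equal to $0$ (so $\pivsbm=0$) with the consumer's volatility frozen at $|\sigma|^2$, and that this family of contracts is admissible and genuinely exhausts the restricted problem; this is where the supermartingale comparison underpinning the upper bound $J_{\rm P}\le U(w(0,X_0)-L_0)$ must be run with care, in parallel with Appendix~\ref{app:sblinear}. By contrast, the integration--by--parts bookkeeping that converts the abstract sensitivity $Z$ into the baseline--rebate form, and the scalar optimisation yielding $\Lambda$, are routine once the representation is set up.
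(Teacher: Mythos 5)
Your route is the paper's own: restrict to the Cvitani\'c--Possama\"i--Touzi class of contracts with zero volatility sensitivity, reduce the producer's HJB accordingly (in the paper's notation, $F_0(q)=q|\sigma|^2$ when $\Gamma\equiv 0$), plug in the linear ansatz $w(t,x)=\delta(T-t)x+\int_t^T\msbm(s)\,\drm s$, solve the resulting pointwise scalar problem whose half--line first--order conditions produce $\Lambda$ (with the indicator $\1_{\{\delta<0\}}$ coming from the kink of $z\mapsto z^-$) and $\zsbm(t)=\Lambda\delta(T-t)$, and recover the rebate form by integrating $\int_0^T\zsbm(t)\,\drm X_t$ by parts. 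That algebra is correct and reproduces Appendix~\ref{app:nogamma}.

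There is, however, a genuine error in your setup of the restricted class, precisely at the point you flagged as delicate. You write the continuation value as $\drm Y_t=-\big[f(X_t)+\Hm(Z_t)-\tfrac r2 Z_t^2|\sigma|^2\big]\drm t+Z_t\,\drm X_t$, i.e.\ you replace the compensation term $\tfrac r2 Z_t^2\,\drm\langle X\rangle_t$ of the paper's representation \eqref{eq:Y} by its value at the nominal volatility, $\tfrac r2 Z_t^2|\sigma|^2\drm t$. The two contracts coincide on the path induced by the response you claim, but they are different functionals of the path and therefore induce different responses. In the paper's class ($\Gamma\equiv 0$ in \eqref{eq:Y}) the compensation is indexed on the \emph{realised} quadratic variation, so it offsets the consumer's risk penalty whatever his volatility choice: his effective volatility incentive is exactly $\Gamma=0$, hence $\widehat b(0)=1$, the volatility stays at $|\sigma|^2$, and the participation constraint binds at $Y_0=L_0$. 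In your class the compensation is deterministic, so the consumer bears the uncompensated risk $\tfrac r2 Z_t^2|\sigma(\beta_t)|^2\drm t$, and his best response on volatility is $\widehat b(-rZ_t^2)$, not $1$: he strictly reduces volatility whenever $\lambda_j r Z_t^2>1$ for some $j$ (a condition that actually holds near $t=0$ under the paper's calibrated parameter values). In that case his value exceeds $U_{\rm A}(L_0)$, participation is slack, and the HJB you derive --- which hard--codes the volatility $|\sigma|^2$ --- is no longer the value of the producer's problem over your class. The fix is to work with \eqref{eq:Y} and $\Gamma\equiv0$ throughout the dynamic--programming and verification steps, and only at the very end, when stating the contract, rewrite $\tfrac r2\int_0^T\zsbm^2(t)\,\drm\langle X\rangle_t$ as the deterministic $\tfrac r2\int_0^T\zsbm^2(t)|\sigma|^2\drm t$ via the on--path identity $\drm\langle X\rangle_t=|\sigma|^2\drm t$ (which is exactly what the paper does in item (iii)); with that correction, the remainder of your argument is the paper's proof.
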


\begin{proof} See Appendix~\ref{app:nogamma} \qed\end{proof}

\hspace{5mm}

The price for energy is still a weighted sum of the value of energy for the consumer and the producer. Besides, we have:
\[
\piefb - \piesbm := \underbrace{(\Lambda - \frac{p}{r+p})}_{\geq 0} \delta.
\]

During off--peak period ($\delta >0$), the second--best price is lower than the first--best optimum. During peak--load period, the producer pays or charges a price greater than what is socially optimal.

\noindent We turn now to the measure of the welfare loss due to the information rent of the consumer because of his hidden actions. Define the informational rent as $I := - \frac{1}{p} \log \big(\frac{\Vfb}{\Vsb}\big)$. The proof of the following Proposition giving the value of the information is found in Appendix~\ref{app:inforent}.

\begin{Proposition} {\rm (Comparisons)}\label{prop:inforent}
Assume that $f-g = \delta x$. Then:
\medskip
{\rm (i)} If $\delta \ge 0$, then there is no informational rent, $I = 0$ and $\xisb = \xifb$. 

\medskip
{\rm (ii)} When $\delta \le 0$ and $h+r \delta^2 T^2 \le 
\frac{1}{\bar \lambda}$, the informational rent is
\[
 I 
 = 
 \frac{\delta^2T^3r^2}{6(p+r)}
 \frac{1}{\frac{1}{|\sigma|^2}+\frac{p+r}{\bar\mu}}.
\]
\end{Proposition}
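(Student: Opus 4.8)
The plan is to exploit the exponential-utility structure to collapse the rent into a one-dimensional deterministic integral, and then evaluate that integral separately in the two regimes. First I would unwind the definition of $I$: since $U(x)=-\mathrm e^{-px}$, Propositions~\ref{prop:fblinear} and~\ref{prop:sblinear} give $\Vfb=-\exp(-p(\bar v(0,X_0)-L_0))$ and $\Vsb=-\exp(-p(v(0,X_0)-L_0))$, so the ratio collapses and
\[
I=-\tfrac1p\log\frac{\Vfb}{\Vsb}=\bar v(0,X_0)-v(0,X_0)=\int_0^T\big(\mfb(t)-\msb(t)\big)\,\drm t,
\]
the linear terms $\delta TX_0$ in $\bar v$ and $v$ cancelling. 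Thus the whole statement reduces to comparing the deterministic rate functions $\mfb$ and $\msb$, and $I\ge 0$ will simply reflect the pointwise inequality $\mfb\ge\msb$.

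For part~(i), with $\delta\ge0$ one has $\delta^-=0$, so the drift term in $\mfb$ vanishes and $\mfb(t)=\Hv\big(-h-\rho\delta^2(T-t)^2\big)$. On the second-best side I would invoke Corollary~\ref{prop:RespDeltaPos}, which identifies the optimiser $\zsb(t)=\frac{p}{r+p}\delta(T-t)\ge0$; since then $\zsb^-=0$, the quadratic $\bar\mu(\zsb^-+\delta(T-t))^2$ exactly cancels the $\frac12\bar\mu\delta^2(T-t)^2$ term of Proposition~\ref{prop:sblinear}(i), leaving $\msb(t)=\Hv(\gsb(t))$ with $\gsb(t)=-q(\zsb(t))$. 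A short computation gives $q(\zsb(t))=h+\rho\delta^2(T-t)^2$ (the cross terms combine to $\rho=rp/(r+p)$), whence $\msb(t)=\mfb(t)$ and $I=0$. The identity $\xisb=\xifb$ then follows by matching the closed forms: the energy prices coincide ($\piefb=\piesb=\frac{r}{r+p}\kappa+\frac{p}{r+p}\theta$) and the induced laws coincide (same optimal $\widehat b$, hence $\P^{\rm fb}=\P^{\rm sb}$), so the two contracts agree $\P^{\rm sb}$-a.s., the differing volatility prices being absorbed into the deterministic fixed parts that both pin the consumer at $L_0$.

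For part~(ii) the hypothesis $h+r\delta^2T^2\le 1/\bar\lambda$ is precisely what keeps both problems in the regime where no responsiveness effort is triggered. Since $\rho\le r$, one checks $h+\rho\delta^2(T-t)^2\le1/\bar\lambda$, so by Proposition~\ref{prop:agentresponse} all $\widehat b_j=1$ and $\Hv(\gamma)=\frac12\gamma|\sigma|^2$ is linear on the relevant range; hence $\mfb(t)=\frac12\bar\mu\delta^2(T-t)^2-\frac12\big(h+\rho\delta^2(T-t)^2\big)|\sigma|^2$. For $\msb$ I would substitute the linear form of $\Hv$, turning the scalar problem into minimisation of the convex quadratic $G(z)=\bar\mu(z^-+\delta(T-t))^2+q(z)|\sigma|^2$; guessing $z<0$ (so $z^-=-z$), the first-order condition yields $\zsb(t)=\Lambda\,\delta(T-t)$ with $\Lambda=\frac{\bar\mu+p|\sigma|^2}{\bar\mu+(r+p)|\sigma|^2}\in(0,1)$, which is negative for $\delta\le0$, and one verifies a posteriori that $q(\zsb)\le h+r\delta^2T^2\le1/\bar\lambda$ so the candidate stays in the linear regime (this amounts to $\Lambda\ge\frac{p-r}{p+r}$, which always holds). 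Substituting back gives $\mfb(t)-\msb(t)=\tfrac12|\sigma|^2\delta^2(T-t)^2\big[\tfrac{r(\bar\mu+p|\sigma|^2)}{\bar\mu+(r+p)|\sigma|^2}-\rho\big]$, and the bracket collapses to $\frac{r^2\bar\mu}{(r+p)(\bar\mu+(r+p)|\sigma|^2)}$. Finally $\int_0^T(T-t)^2\,\drm t=T^3/3$, and rewriting the denominator as $\frac{1}{|\sigma|^2}+\frac{p+r}{\bar\mu}$ yields the announced formula.

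The main obstacle is the scalar minimisation in $\msb$: one must determine the sign of the optimiser to resolve $z^-$, and, crucially, verify that the explicit minimiser remains inside the no-effort regime so that the linearisation of $\Hv$ is legitimate — this is exactly where the structural hypothesis $h+r\delta^2T^2\le1/\bar\lambda$ is consumed. Everything else is bookkeeping: the exponential form trivialises the logarithm, and the remaining work is the routine simplification of the bracket and an elementary integral.
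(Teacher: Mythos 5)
Your overall route coincides with the paper's proof in Appendix~\ref{app:inforent}: you reduce $I$ to $\int_0^T(\mfb(t)-\msb(t))\,\drm t$ by the exponential structure, settle part (i) through Corollary~\ref{prop:RespDeltaPos} (correctly reading the $\Hm$ appearing in Proposition~\ref{prop:sblinear}(i) as $\Hv$, which is indeed a typo, since $-2\Hv(-q)=F_0(q)$ by Lemma~\ref{lem:F0}), and settle part (ii) by passing to the no--responsiveness--effort regime where $\Hv$ is linear. All your algebra checks out: $q(\zsb)=h+\rho\delta^2(T-t)^2$ in part (i), $\Lambda\in\big(\tfrac{p}{r+p},1\big)$, and the collapse of the bracket to $r\Lambda-\rho=\frac{r^2\bar\mu}{(r+p)\left(\bar\mu+(r+p)|\sigma|^2\right)}$ in part (ii). Your justification of $\xisb=\xifb$ in part (i) is, if anything, more detailed than the paper's one-line argument.

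There is, however, one step in part (ii) that does not close as written. The quantity to be evaluated is $\inf_{z}\big\{\bar\mu(z^-+\delta(T-t))^2+F_0(q(z))\big\}$, and you replace it by the infimum of the linearised objective $G(z):=\bar\mu(z^-+\delta(T-t))^2+q(z)|\sigma|^2$. Since $F_0(q)=\inf_{\gamma\le 0}f_0(q,\gamma)\le f_0(q,0)=q|\sigma|^2$, with equality only when $q\le 1/\bar\lambda$, the linearised objective is an \emph{upper} bound for the true one; moreover $F_0$ is concave and non--decreasing, so the true objective is not convex and an interior critical point that happens to satisfy the consistency check need not be a global minimiser. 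Hence minimising $G$ and verifying a posteriori that $q(\zsb)\le 1/\bar\lambda$ only gives that the true infimum is \emph{at most} $G(\zsb)$; it does not exclude that the true objective dips strictly below $G(\zsb)$ at some $z$ with $q(z)>1/\bar\lambda$, where responsiveness effort is triggered and $F_0(q(z))<q(z)|\sigma|^2$. If that happened, $\msb$ would be larger and $I$ strictly smaller than your formula. To close the gap you need the localisation in Proposition~\ref{prop:sbnonlinear}(ii) --- which is what the paper uses implicitly when it asserts ``we know that $\zsb(t)=\Lambda\delta(T-t)$'' --- namely that for $v_x=\delta(T-t)\le 0$ the true minimiser lies in $\big[v_x,\tfrac{p}{r+p}v_x\big]$, an interval on which $q$ is decreasing, hence bounded by $q(v_x)=h+r\delta^2(T-t)^2\le 1/\bar\lambda$; so the true and linearised objectives agree on that whole interval, and since your candidate $\Lambda\delta(T-t)$ lies inside it (as $\tfrac{p}{r+p}<\Lambda<1$), the two infima coincide. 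Equivalently, one can redo the monotonicity argument behind that localisation via Lemma~\ref{lem:F0}: the true objective is non--increasing on $z\le v_x$ and non--decreasing on $z\ge\tfrac{p}{r+p}v_x$. With this single additional invocation your proof is complete.
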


\vspace{5mm}

During off--peak period, there is no information rent. In this case, in terms of volatility reduction, the optimal contract implements the same  efforts as would do the social planner. In this situation, the producer has only one objective to achieve, reducing the volatility and not the expected consumption. He can achieve this objective with the two instruments which are the two payment rates $\zsb$ and $\gsb$.  During peak periods, this is no longer the case. The two payment rates are constrained by one another because the payment rate on the volatility is a deterministic function of the payment rate on the drift. Note that the information rent remains positive in the case~(ii) even if there is only one usage. The intuition would be that if there is only one usage, the producer can recover the effort by observing the quadratic variation and thus enforce optimal effort on the volatility. Nevertheless, this is not enough to enforce the first--best as the efforts on the drift remains unobservable to the producer.

\section{Numerical illustration}
\label{sec:numeric}

The purpose of this section is to provide an estimate of the potential gains from the implementation of a responsiveness incentive 
mechanism, both in terms of increase in consumer's response to price events and of benefits for the producer. We first calibrate our model with linear energy 
valuation and energy generation cost on the publicly available data set of Low Carbon London Pricing Trial using the idea that it corresponds to the implementation of an optimal contract without responsiveness incentives. All parameters can be estimated with this strategy at the exception of  the marginal costs of volatility reduction $\lambda_i$. We screen the effects of a range of potential values for this last parameter, design a reference value and proceed to robustness analysis.

\subsection{Data and model calibration} 
\label{ssec:data}

The Low Carbon London Project of demand-side response (DSR) trial  performed in 2012--2013 was conducted at the initiative of the UK energy regulator (Ofgem) in partnership with both industrial players and academic institutions amongst which two have to be cited for our study, Imperial College who treated the data of the experiment and EDF Energy who acted as the energy provider and enroller for the consumers. The data consists in a 
set of 5,567 London households whose consumption have been measured at an half hourly time--step on the period from February
2011 to February 2014. For the dynamic Time--of--Use (dToU) tariff trial, the population was divided in two groups. One group of 
approximately 1,117 households were enrolled by EDF Energy in the dToU tariff while the remaining 4,500 households were not 
subject to this dynamic tariff. The dToU was applied during the year 2013 (January, 1st to December, 31st). Tariffs were sent to the households on a day--ahead basis using a Home Display or a text message to the customer mobile phone. Prices had three levels: High (67.20 p/kWh), Normal 
(11.76 p/kWh) and Low (3.99 p/kWh). Standard tariff is made of a flat tariff of 14.228 p/kWh. 

The precise description of the dToU trial performed in 2013 is given in Tindemans et al. (2014)  \cite[chap.~3]{Tindemans14}. The 
total number of events (High and Low) were 93 to deal with supply events (shortage of generation) and 21 for distribution network 
events. In our study, we are only interested in the High price events. There were 69 such events of High prices (45 for supply 
reasons and 24 for network reasons). The duration of an event could be 3, 6, 12 or 24 hours. The Low Carbon London 
Demand--Side Response Trial was designed to be as close as possible to a random trial experiment, while accounting for the 
operational constraints related to the enrolment of a large set of customers within the portfolio of given UK utility (EDF Energy). 
The events were randomly placed over the trial period while targeting the highest peaks of demand in the year.

The data collected by the Low Carbon London Project of demand-side response (DSR) trial 
performed in 2012--2013 can be downloaded freely at London DataStore website (\url{https://data.london.gov.uk}) under the 
section {\em Smart Meter Energy Consumption Data in London Households}. The demand response trial is extensively described 
in a series of reports amongst which the reports Tindemans et al. (2014) and Schofield (2014) \cite{Tindemans14,Schofield14} are 
the most relevant for our study. Out of this dataset, we eliminated all consumers for which data were not complete or exhibited outliers. The resulting sample 
consists in 880 consumers in the control group and 250 consumers in the dToU group.

As mentioned in Section~\ref{ssec:sblinear}, the optimal contract is the sum of a constant term plus a term proportional to the consumption. Thus, the optimal contract has the same form as the LCL pricing trial contract: a fixed premium to get enrolled plus a term proportional to the consumption. This provides the rationality for the calibration of the optimal contract without responsiveness control to the data of the LCL pricing trial. Thus, our strategy to calibrate our model is to use Proprosition~\ref{prop:nogamma} to answer the question: what should be the parameters value of the consumer's behaviour model $\mu$ that would lead to the observed consumption reduction of the LCL pricing trial? Furthermore, because our model relies on simplifying assumption regarding the pattern of daily consumption, we fix the initial condition of the consumption to be zero ($X_0 =0$), making $X_t$ directly the observed reduction of consumption.  

\no {\bf Duration of the price event $T$.} In the LCL pricing trial, there were 69 High Price events for a total of 778 half--hours. The events could last 3, 6, 12 or 24 hours. Only one exceptional event lasted a full day (24 hours). Removing this outlier, we find an average duration of price event of 5.44~hours. We set $T = 5.5$~hours.

\no {\bf Energy value parameters $\kappa$ and $\theta$.} We have seen in Proposition~\ref{prop:sblinear} that $\kappa$ should be lower than $\theta$ to justify an average consumption reduction. Thus, we set the marginal value of electricity of the consumer to $\kappa = 11.76$~pence/kWh, which is the price the consumer enrolled in the dToU group pays in normal situation and we set the marginal cost of electricity generation to $\theta = 67.2$~pence/kWh. This setting clearly refers to a high peak demand situation when the producer has a strong interest in avoiding costly generation.

\no {\bf Nominal volatility $\sigma$.} As pointed in the introduction, there is significant noise in the observed reduction of the consumers enrolled in the dToU tariff. For an average reduction of consumption of 40~W for a consumption of magnitude of 1~kW, the estimation of the responses range between -200~W and + 200~W. No direct estimation of the standard deviation of the reductions are reported in LCL Pricing Trial reports. We thus performed an estimate of the volatility of the consumption of the control group during price event using the fact that, given our model, 
$$\mathrm{Var}\Big[\frac{1}{T}\int_0^T X_t \drm t \Big] = \frac13 T \sigma^2,$$
where the variance is computed under the no--effort distribution $\P^{(0,1)}$. We  estimate an average volatility of $\sigma = 85$~W.h$^{\frac12}$.

\no {\bf Producer's risk-aversion $p$.} Let $S$ denote the spot-price of electricity for a given hour and $F$ the forward price quoted the day before, one has $\E[e^{-p S}] \approx e^{-p (\E[S] - \frac12 p \sigma^2_S)}$, and by equating the certainty equivalent with the forward price, we obtain the risk--premium $\text{RP}:=F - \E[S]=\frac12 p \sigma^2_S$. The risk--premium electricity utilities are ready to pay to avoid the day--ahead spot price risk has been extensively analysed and estimated in the financial economics literature. Bessembinder and Lemon (2002) \cite{Bessembinder02} followed by Longstaff and Wang \cite{Longstaff04}, Benth et al. \cite{Benth08} and Viehmann (2011) \cite{Viehmann11} estimated the relation between the risk premia on each hour of delivery and the variance of the spot price on this hour. They find consistent and convergent estimation both on the sign of the risk premia (negative for off--peak hours and positive for peak hours). If one focuses on the peakest hour of the day (typically 7 or 8pm), the former authors find that dependance of the risk--premium with respect to the variance of the spot price is $0.31$ for Viehmann (2011) (Table 5, hour 20), which makes $p = 0.62$; Benth et al. (2008) estimates that $p$ is no lower than $0.421$ (page 14); Longstaff and Wang (2004) find a dependence of the risk--premium to the variance of the spot price of $0.29$ (page 1895, Table VI, hour 20), which makes $p=0.58$. Bessembinder and Lemon (2002) estimates risk-premia not for day-ahead spot price risk but for monthly prices, which is less relevant in our context. Thus, we take as a nominal value for the risk--aversion parameter of the producer $p=0.6$ per pound.

\no {\bf Consumer's risk-aversion $r$.}  There is a large and not necessarily consensual economic literature on the relevant estimation of consumer's risk-aversion parameters, in particular when using CARA utility function (see Gollier (2004) monography \cite{Gollier04}). Nevertheless, in the context of the LCL Pricing trial, the consumers were facing a small variation of their electricity bill which is itself a fraction of their expenses, making the approximation of independence of decision with respect to wealth sustainable. Further, it is possible to provide an estimate of the risk--aversion parameter $r$ of the population who accepted to enrol in the dynamic ToU. Indeed, the enrolled consumers were paid 100~\textsterling~at the beginning of the trial and 50~\textsterling~more if they completed all the trial. Besides, we estimate the financial risk taken by consumers adopting dynamic ToU tariff. We computed for each consumer of the control group the electricity bill with the two possible tariffs, the standard flat tariff and the dynamic ToU tariff. We found that the consumers were facing a risk with a statistically significant standard-deviation of $23$~\textsterling~at the 5\% level. Using the relation between the risk-premium ($150$~\textsterling) and the risk level ($23$~\textsterling) in the relation giving the certainty equivalent of a risk of known standard deviation for an exponential utility function, we estimated an absolute risk--aversion $r=0.56$~per pound, which is very close to the producer's risk aversion parameter.

\no {\bf Consumption variation cost $h$.} This parameter is related to the flexibility of the producer's generation capacities. The higher the flexibility of the generation, the lower the variance of consumption induces costs. With the development of intermittent energy sources, the quantification of the flexibility of a given power system has attracted the attention of researchers. For a review of this topic, we refer to Hirth (2015) \cite{Hirth15}. The value for $h$ depends on the whole electric system, and not only on the capacity of a single power plant. There is a difference of flexiblity between the electric system of Norway which relies only on hydraulic generation and an electric system based on wind generation and coal--fired plants. Nevertheless, if we focus on peak period of the day where flexibility is provided by gas--fired plants, we can make use of the estimations that exist for the cost of flexibility provided by power plants (see Kumar et al. (2012) \cite{Kumar12} and Oxera (2003) \cite{Oxera03}, Table~3.2 p.~8 and Van den Bergh and Delarue (2015) \cite{VandenBergh15}, Table~IV). Estimates find consistent values of order of magnitude of 25 to 42~\euro/MW$^2$.h for gas fired plants, which is in general the technology used in peaking period of the day. Thus, we choose a nominal value of $h$ $=$ 40~\euro/MW$^2$.h to consider a not so flexible system in which there may be room for flexibility exchange.

\no {\bf Costs of effort on average consumption $\mu$.} Because we do not have access to data at the usage level, we consider a single average usage. In order to fix an estimate of $\mu$, we interpret the LCL experiment as the implementation of our demand--side model when there is no control of responsiveness or volatility (see Proposition~\ref{prop:nogamma}). The conclusion of Schofield et. al. (2014) provides an estimate for the realised average consumption reduction of 40~W.  According to Proposition~\ref{prop:nogamma}, the absolute value of the average consumption deviation is given by:\[ \frac{1}{T} \Big| \E\Big[\int_0^T X_t \drm t \Big]\Big| = \frac{1}{3} \Lambda \bar \mu |\delta| T^2. \] Recalling that $T=5.5$, we obtain the value $\bar \mu = 9.3 \, 10^{-5}.$ With this value of the parameter $\mu$, the reduction of $40$~W is obtained at the expense of a cost for the consumer given by  
$$\int_0^T \widehat c_1(z_t) \drm t=\frac{1}{6} \bar \mu  \Lambda^2 \delta^2 T^3 = 4.7~\text{pence}.$$

\no {\bf Costs of effort on consumption volatility $\lambda$.} We recall that higher values of $\lambda$ correspond to lower costs of effort for responsiveness. Because we have no way to calibrate on data a possible value for this parameter, we observe the total cost of volatility reduction as a function of $\lambda$ and compare it to the total cost of average consumption reduction. Figure~\ref{fig:FindingLambda} (Left) shows the numerical estimation of the total cost of volatility reduction and the total cost of average consumption reduction as a function of $\lambda$ with all other parameters fixed at the values above. For low values of $\lambda$, the corresponding consumer's effort is too costly and thus, the resulting cost is zero (no effort). Then, as $\lambda$ increases, the consumer starts marking efforts to be more responsive. When $\lambda$ becomes large, the total cost of volatility reduction starts to decrease. We note that in this setting, the cost of volatilty reduction is one order of magnitude lower than the cost for the average consumption reduction. Because we are interested in the question of what would happen, if the consumer accepts to sign contracts indexed on his responsiveness, we take as a reference value for $\lambda$ the value that corresponds to the maximum of total cost of efforts. This choice corresponds to a worst case scenario for the consumer in terms of costs. It does not correspond to a maximum volatility reduction as Figure~\ref{fig:FindingLambda} (Right) demonstrates. We find a value of $\lambda = 2.8\,10^{-2}$.

\begin{figure}[h!]
\begin{center} 
\includegraphics[width=0.45\textwidth]{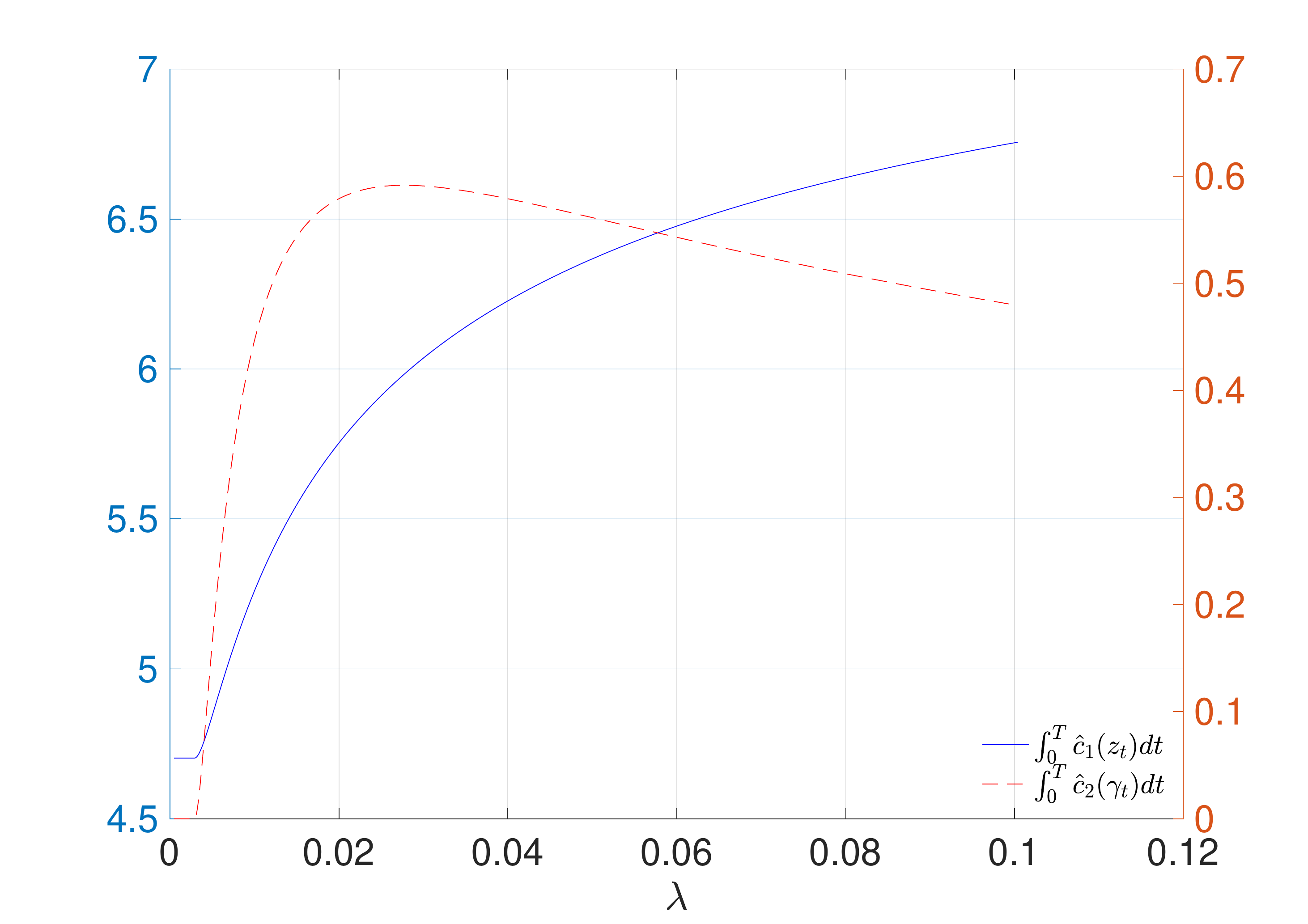}
\includegraphics[width=0.45\textwidth]{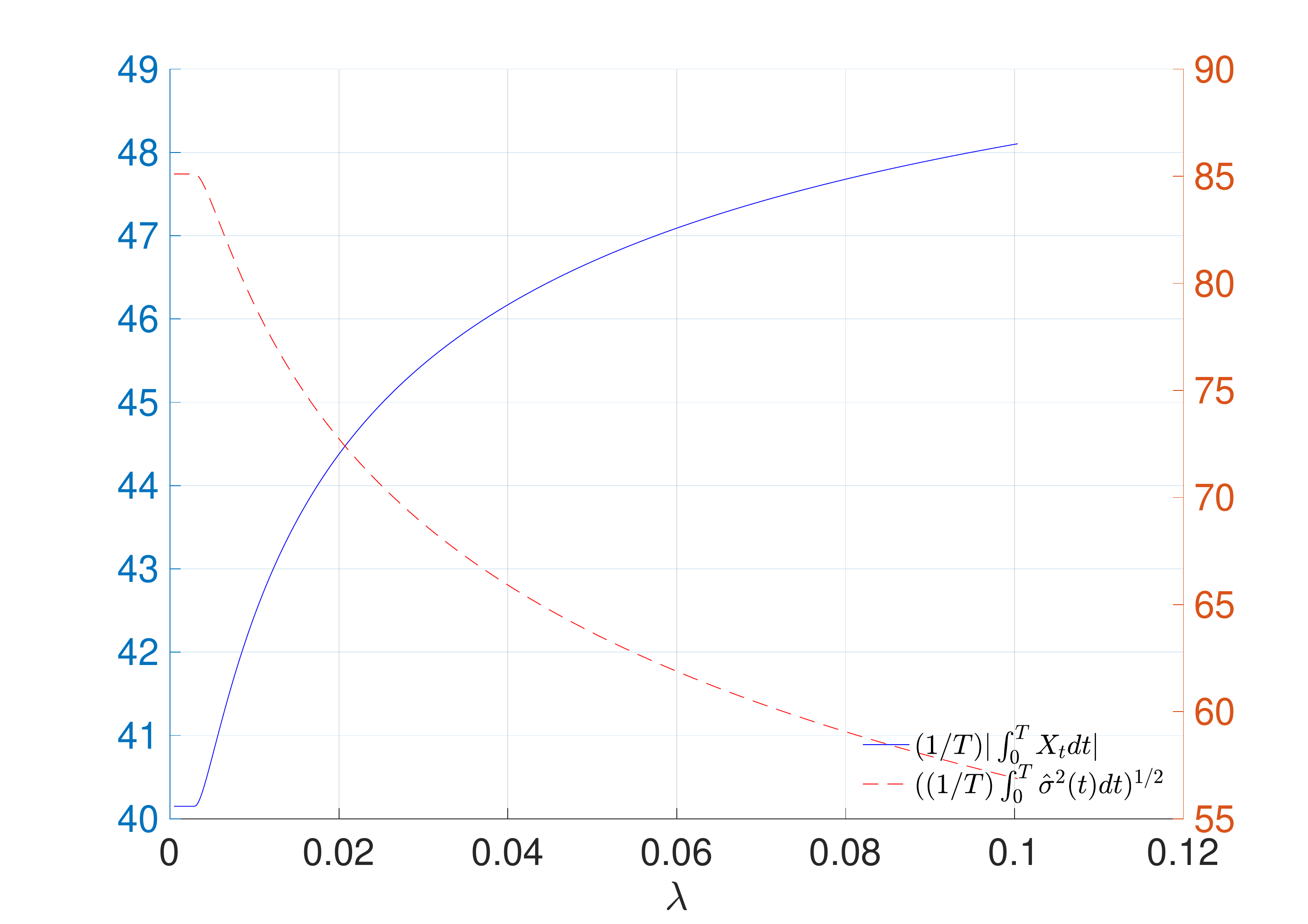}
\caption{ (Left) Total costs of effort for average consumption reduction (left axis) and for volatility reduction (right axis). Values in pence. (Right) Total volatility and average consumption reduction in Watt.}
        \label{fig:FindingLambda}
        \end{center}
\end{figure}

 We summarise in Table~\ref{tab:params} below the reference case for the calibration of our model.
\begin{table}[thb!]
        \centering
        \begin{tabular}{ |c|c|c|c|c|c|c|c|} 
        \hline\hline
$T$ &        $h$   &     $\delta$    & $p$           &  $r$        &   $\sigma$   & $\mu$           &     $\lambda$           
        \\
{\small (h)} &   {\small (p/kW$^2$h)}      & {\small (p/kWh)}  & {\small (p$^{-1}$)}  & {\small (p$^{-1}$)} & {\small (W/h$^{1/2}$)} & {\small (kW$^2$h$^{-1}$p$^{-1}$)} &  {\small ($p^{-1}$kW$^2$h)}          \\ \hline
        & & & & & & & 
         \\
	5.5 & 4.0~10$^{-4}$ & $-55.44$ & $0.6~10^{-2}$ & $0.57~10^{-2}$ & 85 & 9.3~$10^{-5}$ & $2.8~10^{-2}$ 
	\\
       & & & & & & &
	\\ \hline\hline
        \end{tabular}
        \caption{Nominal values for model parameters.}
        \label{tab:params}
\end{table}


%

\subsection{Responsiveness incentive estimated benefit} 
\label{ssec:responsiveness}

Before we examine the potential benefits of the implementation of a responsiveness incentive mechanism such as the one proposed by our  model, we show on Figure~\ref{fig:prices} the different energy and volatility prices defined in Section~\ref{sec:model}. The second--best price of energy without responsiveness incentives is significantly different from the first--best and also from the marginal cost of energy. The incentive for responsiveness leads to a non--constant price of energy. It lies between the marginal cost of energy and the second--best price without responsiveness incenticve. Further, it is higher at the beginning of the price event to trigger a quick response of the consumer. The price for volatility follows the same pattern of decreasing value. Note that it significantly different from the first--best price of volatility, which is here $\piefb = h \frac{p}{r+p}$ $=$ $2\, 10^{-3}$~pence/kW$^2$.

\begin{figure}[h!]
\begin{center} 
\includegraphics[width=0.45\textwidth]{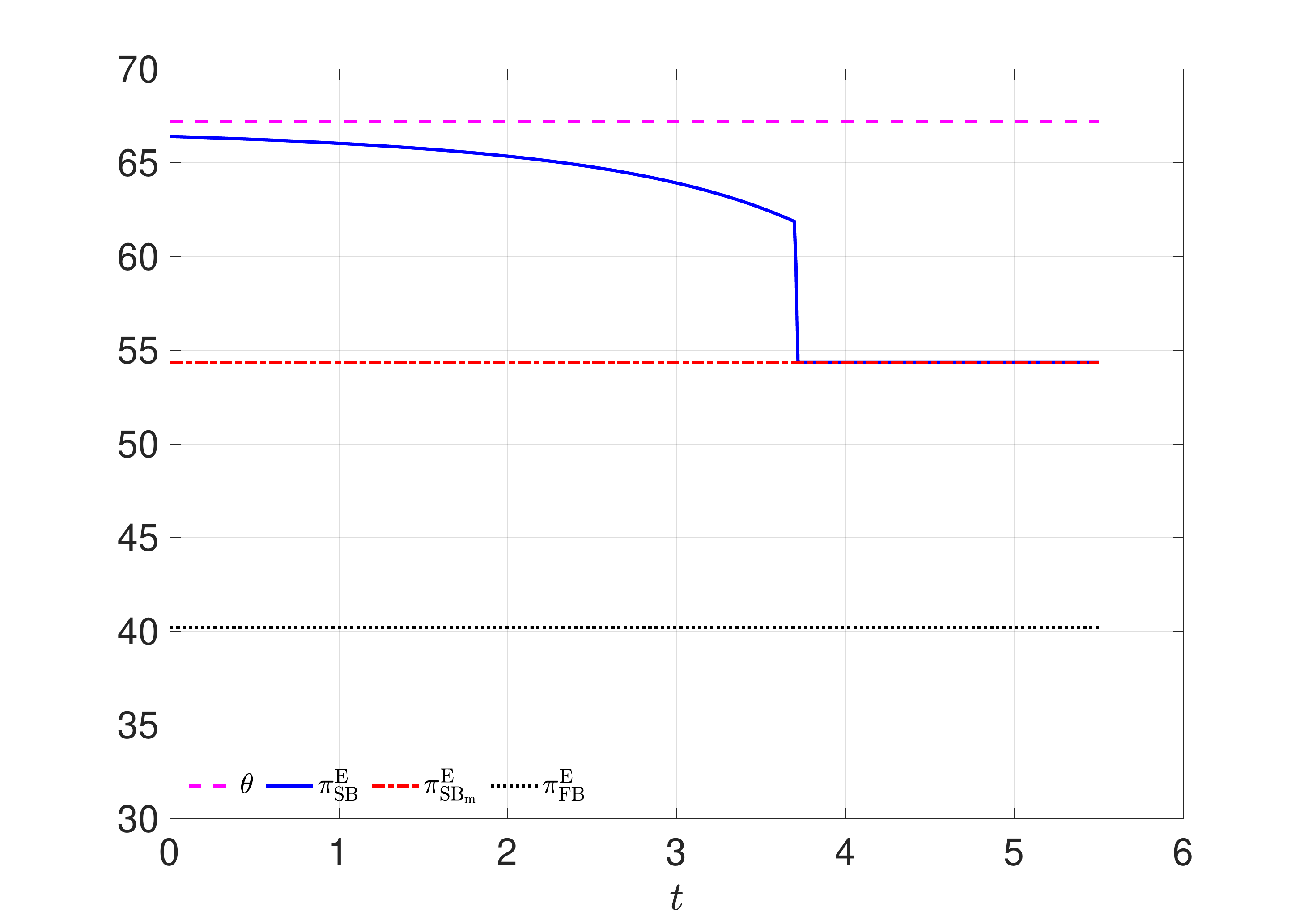}
\includegraphics[width=0.45\textwidth]{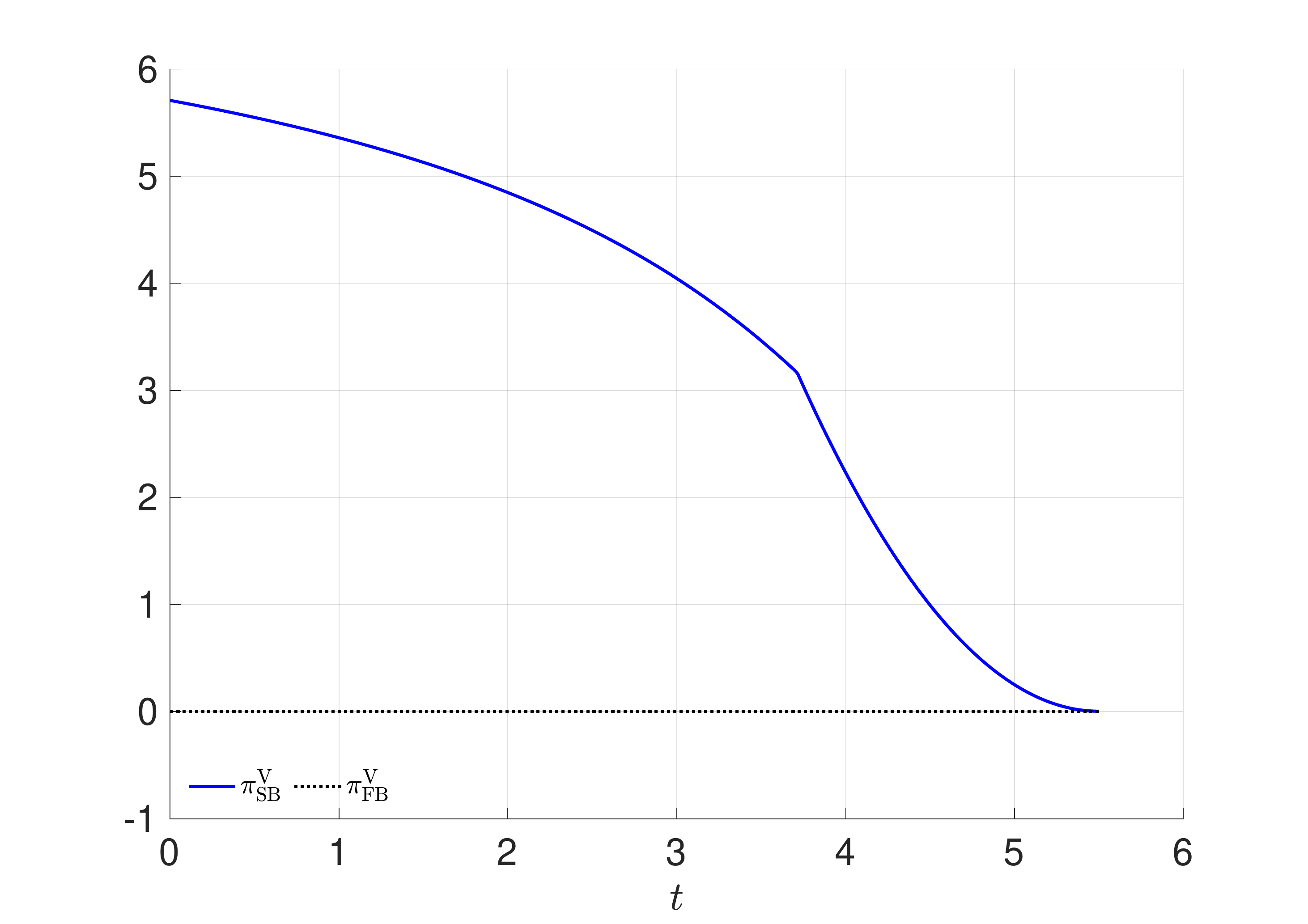}
\caption{Prices for energy (left) and volatility (right).}
        \label{fig:prices}
        \end{center}
\end{figure}

Table~\ref{tab:payments} provides the consequences in terms of costs of efforts for the consumers and benefits for the producer of the calibration of our model with the nominal parameters value summarised in Table~\ref{tab:params} and variants for the parameter $\lambda$.

%
%
\begin{table}[hbt!]
        \centering
        \begin{tabular}{ |l c c c |}
        \hline\hline
                            & First--best & Second--best & Second--best \\
        & & with  & without   \\ 
        & &  responsiveness &  responsiveness  \\  
& & & \\ 
Cost of effort $c_1$        & $5.97$ & $5.97$  & $4.68$ \\ 
Cost of effort $c_2$   &        $0.40$ & $0.59$ & 0 \\ 
Total cost of effort          & $6.37$    & $6.56$ &  $4.68$  \\    
& & &  \\ 
Producer's benefit               & $6.76$ & $6.21$ & $5.40$ \\ 
& & & \\ 
Average  consumption reduction & $52.15$    & $45.17$ &  $40.00$ \\ 
Standard deviation of reduction & $46.49$    & $39.61$ &  $85.06$  \\ \hline\hline 
    \end{tabular}
        \caption{Costs in pence, consumption and standard deviation in Watt.}
        \label{tab:payments}
\end{table}

Because in the second--best with responsiveness incentive, the payment rate $\zsb(t)$  also depends on $\lambda$, the cost of average energy reduction raises from 4.68~pence when there is no responsiveness incentive to 5.97~pence when there is one. This increase is explained by the increase in average consumption reduction. Without responsiveness incentive, the average consumption is 40~Watt as given by the calibration of the model. The incentive to reduce volatility leads to an average reduction of 45~Watt. This increase in average reduction explains most of the increase of the efforts performed by the consumer. The cost of effort for the reduction represents only ten percent of  the cost for average consumption reduction. The increase in efforts of the consumer is almost all converted into benefit for the producer. The producer is able to increase her certainty equivalent by  15\% when implementing responsiveness control and to divide by two the consumption volatility. The first--best indicates that the socially optimal strategy would be to reduce less the volatility and more the average consumption. Besides, in this nominal situation, the best the producer could hope to achieve is to increase her certainty equivalent by 25\%.

\begin{figure}[hb!]
\begin{center}
\includegraphics[width=0.32\textwidth]{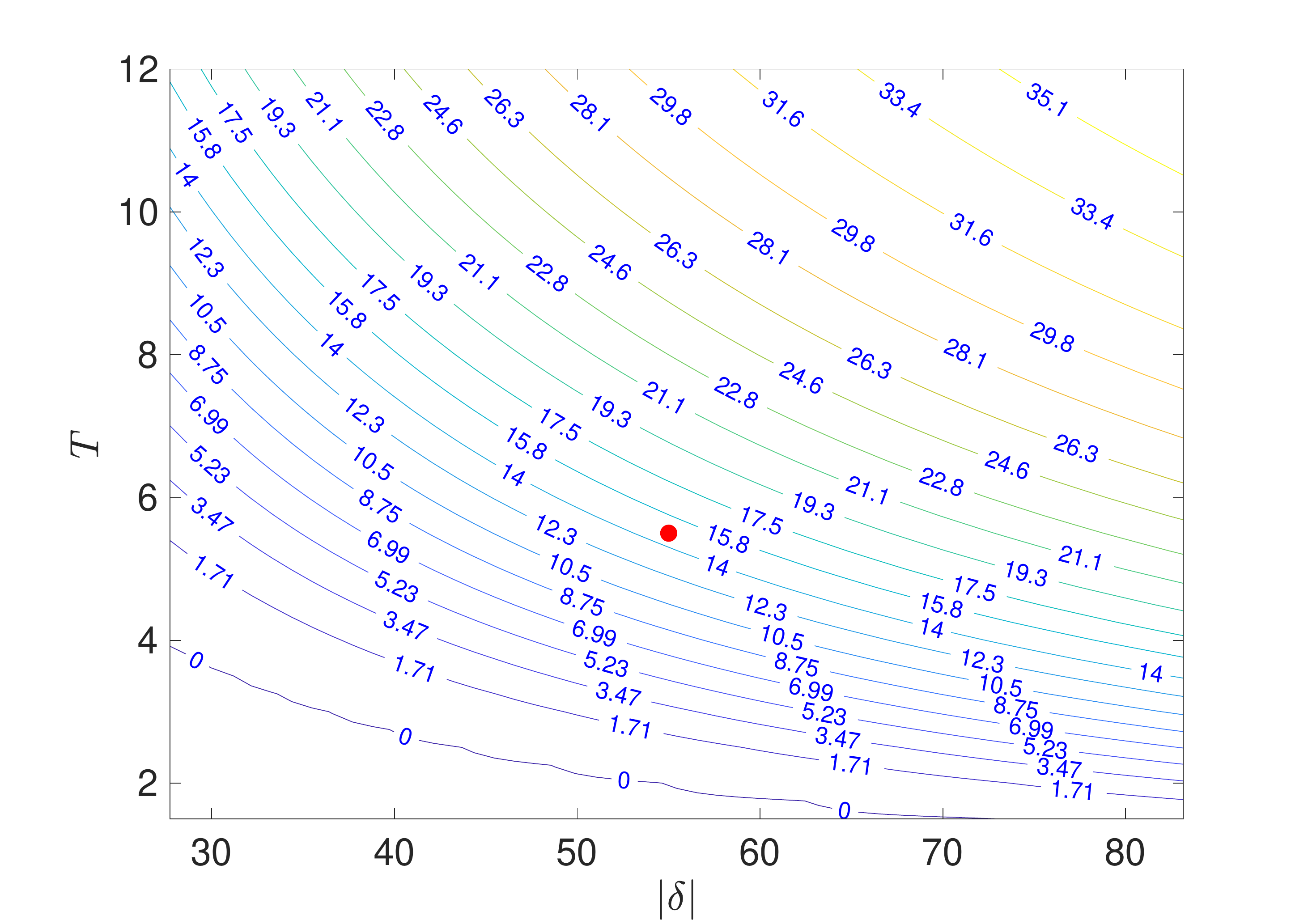}
\includegraphics[width=0.32\textwidth]{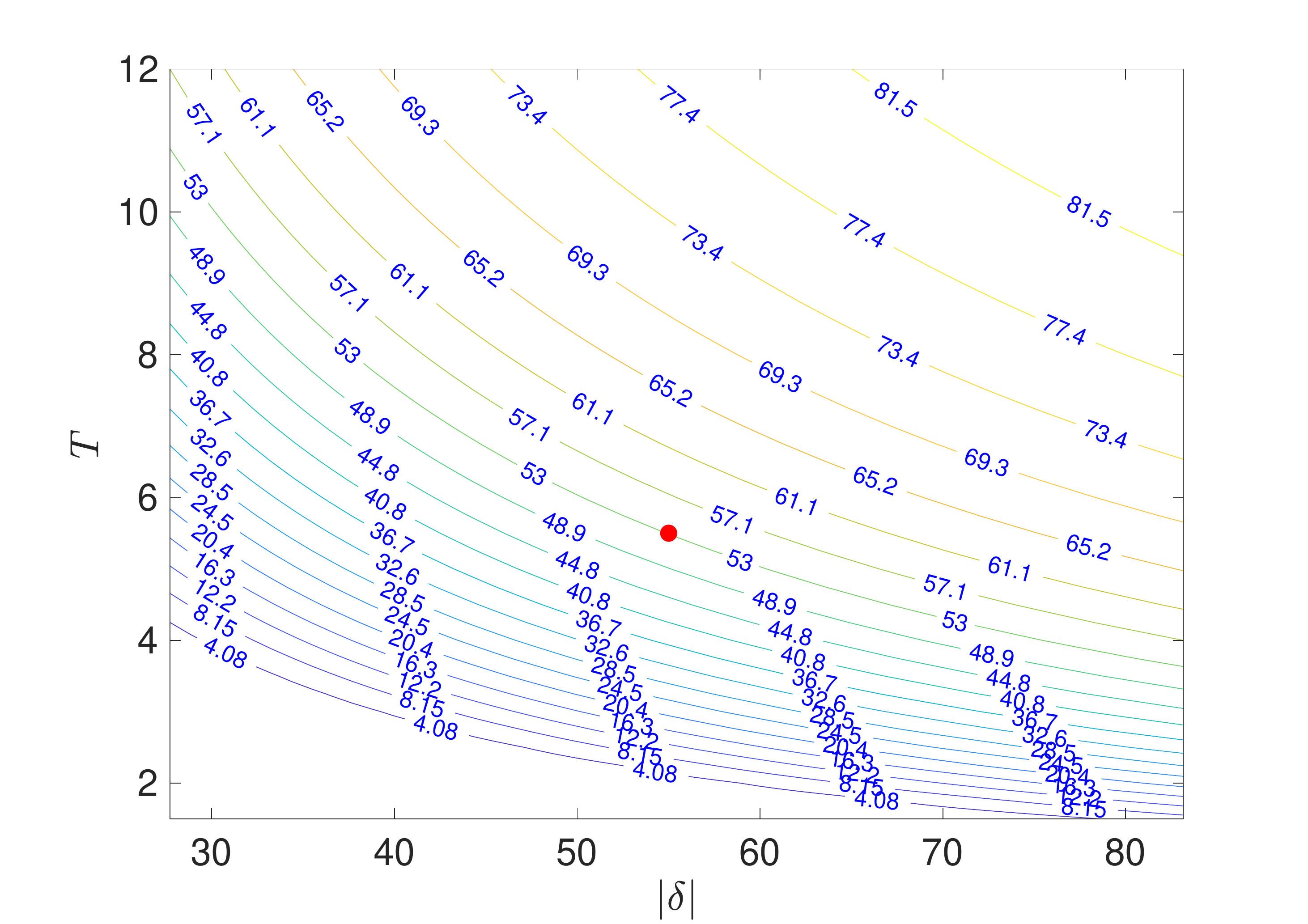} 
\includegraphics[width=0.32\textwidth]{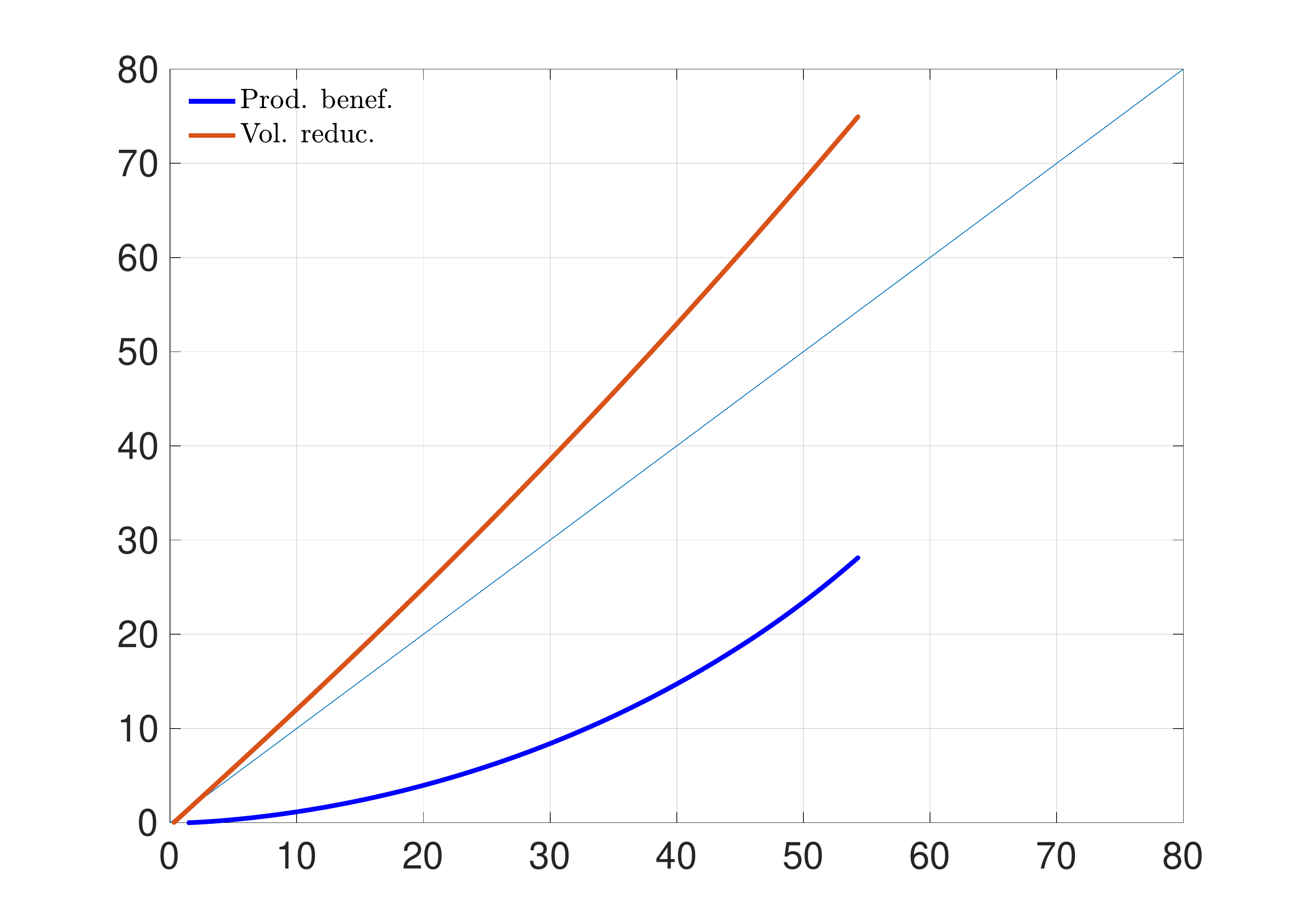} 
\caption{(Left) Gain for the producer from responsiveness control and (Midlle) volatility reduction as a function of the price event duration $T$ and the absolute value of the energy value discrepancy $\delta$; all in percentage. (Right) Producer's gain from responsiveness incentive and volatility reduction as a function of the percentage of increase of the total efforts of the consumer when $\lambda$ varies.}
        \label{fig:Tdelta}
        \end{center}
\end{figure}

In order to assess the sensitivity of the last results to the calibration of the model, Figure~\ref{fig:Tdelta} (Left and Middle) presents the sensitivity analysis of the gain from responsiveness control and the reduction of of the volatility as functions of $T$ and $ |\delta| = \theta - \kappa$. The red dot in the pictures represents the nominal situation. We considered shorter and longer price event up to 12 hours and considered situations with lower energy value discrepancy. We observe that there is a threshold of values of energy value discrepancy and price event duration under which no benefit should be expected from the responsiveness incentives. The lower the energy value discrepancy, the longer the price event should be to ensure a significant benefit of responsiveness control. The incentive on volatility needs time or a large energy value discrepancy to show its benefits. But, on the other hand, the reduction of volatility is less prone to this dependence on the energy value discrepency. Even modest differences can induce subtantial reduction of volatility for a standard duration of a price event. This phenomenon is stressed in Figure~\ref{fig:Tdelta} (Right). We vary the parameter $\lambda$ from a very low value that triggers no effort to very large values and computed the percentage of increase in the producer's certainty equivalent and the percentage of decrease of volatility induced by the responsiveness incentive. The resulting pictures can be read in the following way: an increase by 40\% of the efforts of the consumer increases by 15\% the certainty equivalent of the producer and reduces by 53\% the volatility of consumption. Besides, whatever the increase of $\lambda$, the increase in the consumer's total cost of effort stabilises at 50\% and the producer's benefit increase also reaches a limit of 30\% while the volatility reduction is limited to 75\%.

\begin{figure}[ht!]
\begin{center}
\includegraphics[width=0.33\textwidth]{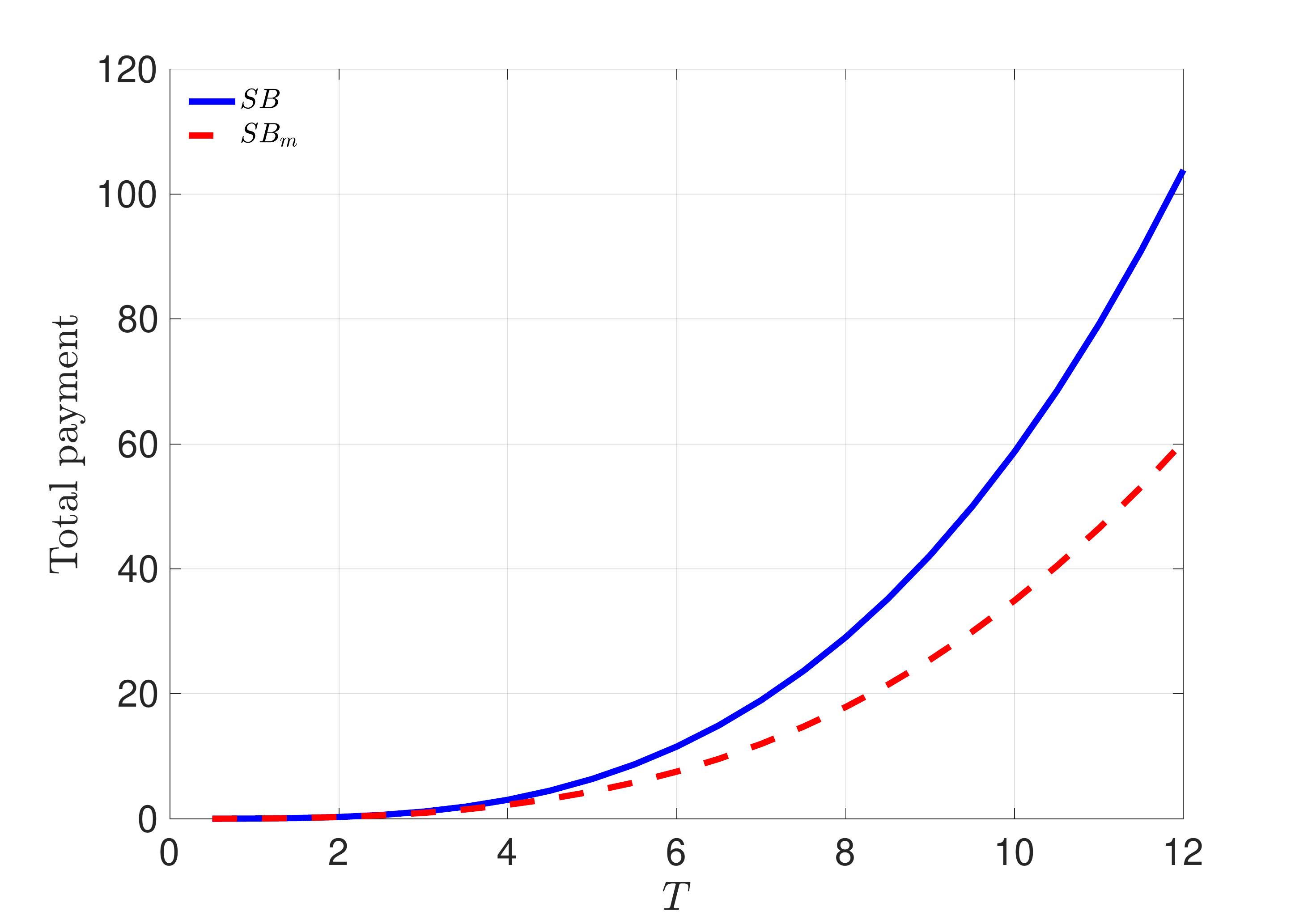}\includegraphics[width=0.33\textwidth]{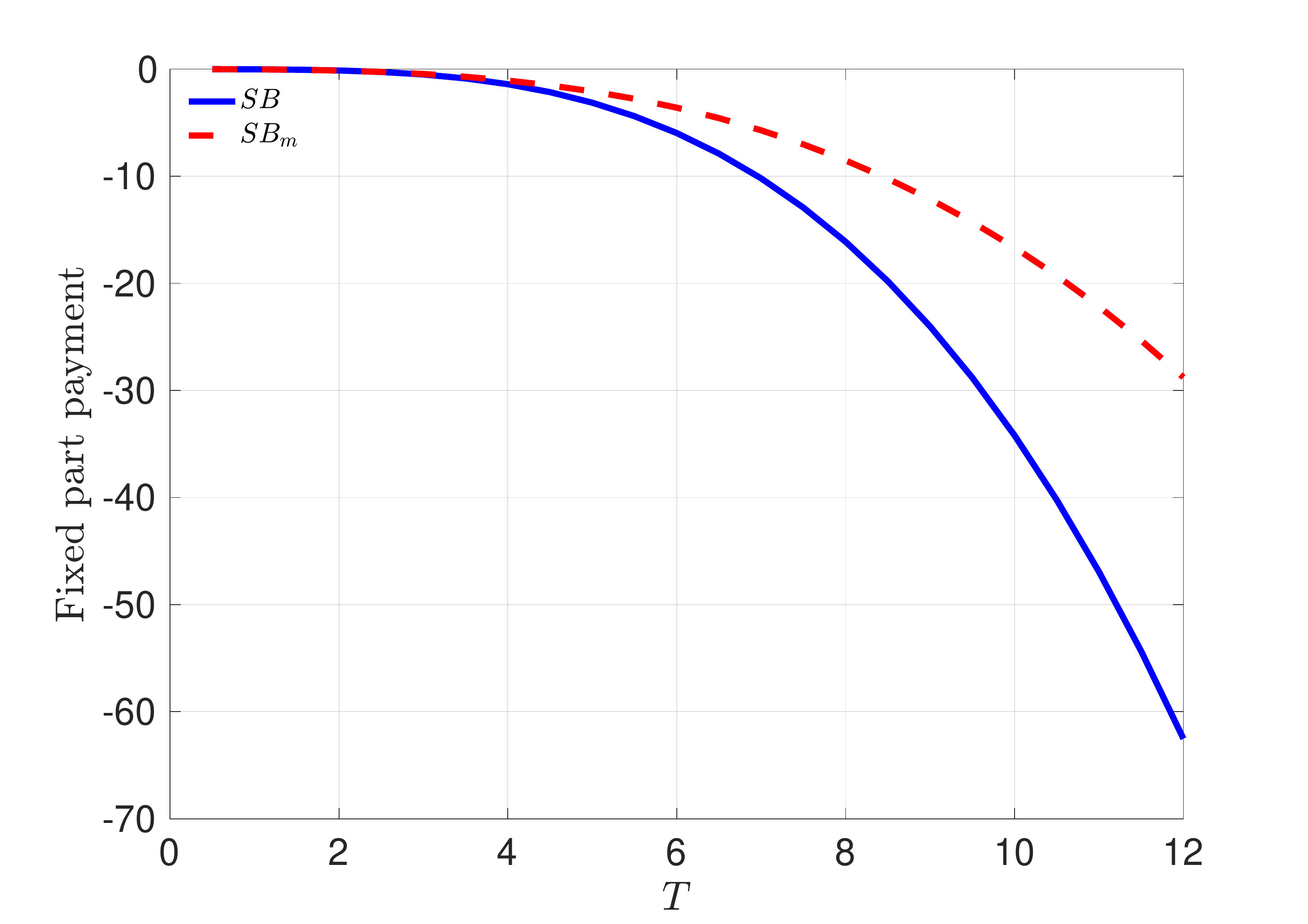}\includegraphics[width=0.33\textwidth]{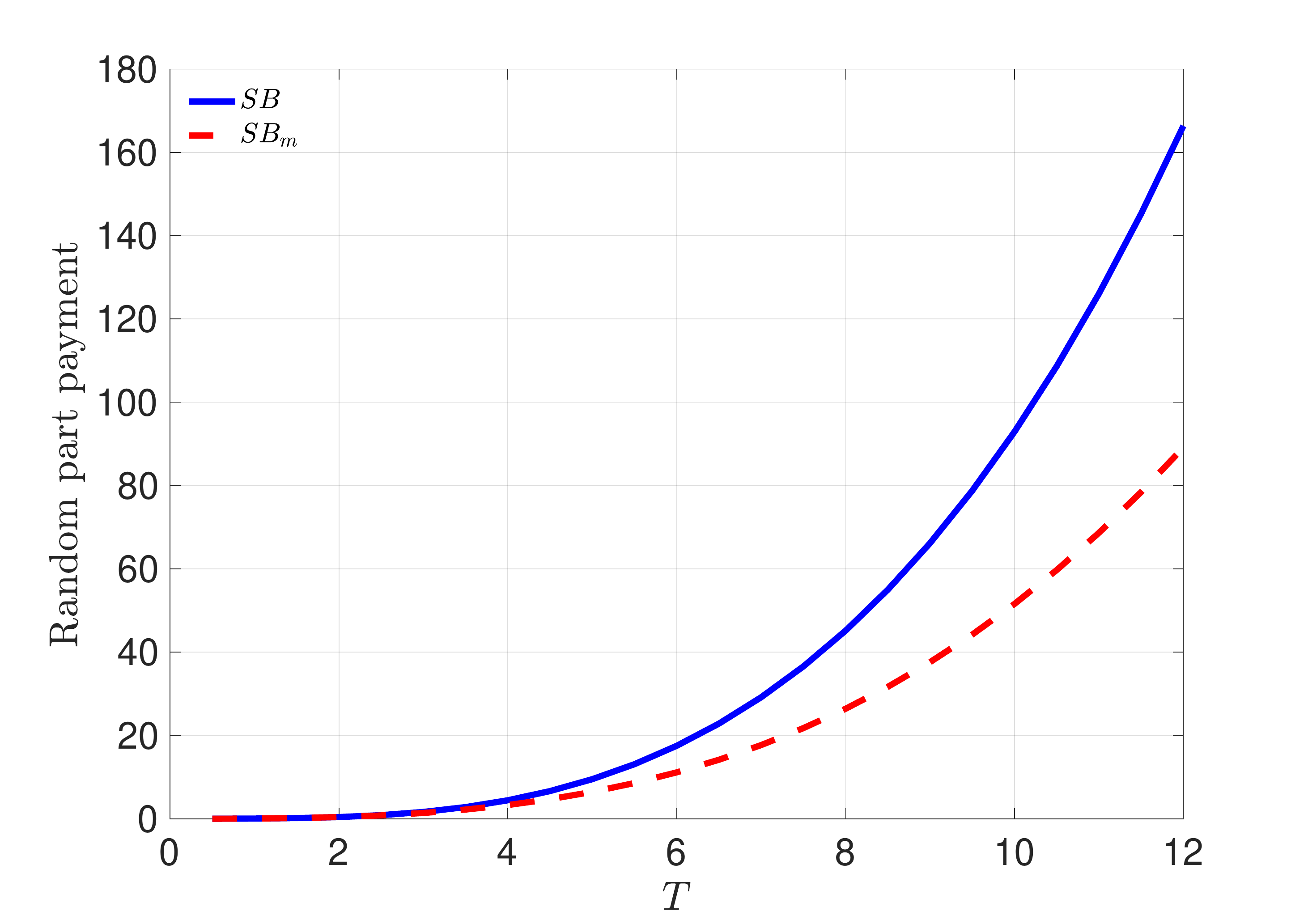}
\caption{Total (left), fixed part (middle) and  certainty equivalent of the random part (right) of the optimal payment with responsiveness control (blue) and without (red) as a function of the price event duration $T$ in pence.}
        \label{fig:payment}
        \end{center}
\end{figure}

We conclude these numerical illustration by showing the decomposition of the payment to the consumer between the fixed part and the random part as defined in Proposition~\ref{prop:sblinear} as a function of the duration of the price event. Indeed, we have seen that in the second--best with responsiveness incentives the prices of energy and volatilities are not constant. The random payment increases thus more rapidly than the payment without responsiveness incentive. Figure~\ref{fig:payment} shows the total payment and its decomposition between its fixed part and the certainty equivalent of the random part as a function of the duration of the price event $T$ with and without responsiveness control. In both cases, the total payment is positive and increases when the duration of the effort becomes large. As expected, the payment with responsiveness control is larger than without it because it requires more efforts from the consumer. The remarkable result comes from the decomposition of the contract between its deterministic part and its random part. The producer charges more the consumer when implementing responsiveness incentive than without, but provides higher certainty equivalent. The implementation of sound response from the consumer starts by charging him a lot more but also by rewarding him a lot more in case of appropriate result. The longer the consumer is asked to make an effort, the higher this difference should be. We cannot resist the temptation of making this result a general principle: if one wants to induce regular results from an agent on a long term basis, one should first reduce his income compared to his peers but then, pay him much more in case of success.

\subsection{Robustness analysis} 
\label{ssec:robust}

We check in this section the robustness of the hypothesis of a linear value of energy. For the sake of simplicity, we concentrate on the effect of a decreasing marginal value of energy for the consumer and leave aside its counterpart on the generation side (increasing marginal cost of generation). We consider now the following specification of the function $f$:
 \begin{align}
 f(x) 
 = 
 \kappa \frac{ 1- e^{-k_1 x}}{k_1},
 \label{eq:fnonlin}
 \end{align}
so that, for small values of $\kappa_1$, we recover the linear case with $f(x) \approx \kappa x$.

Further, we calibrated our model for a single average usage. But, in the case of a single usage, the producer can identify the effort on the responsiveness and thus, be close to the first--best. This is no longer the case when there are more usages. Thus, we assess also the mean payment and the benefit of the contract for the producer in the context of two and four usages. In this case we split the parameters $\mu$, $\lambda$ and $\sigma$ of the nominal situation provided by Table~\ref{tab:params} with the vector of weight $(1/4 \; \;3/4)$ for the two--usage case and $(1/8 \;\; 1/8 \;\; 1/2 \;\; 1/4)$ for the four--usage case. The choice of the vector of weight is guided by the idea of making a contrasted difference between usages.

We compute the numerical solution of the PDE of the second--best optimal contract with responsiveness incentive and non--linear energy value and generation given in Proposition~ \ref{prop:sbnonlinear}~(i). The PDE was solved using a standard finite difference method together with an implicit--Euler scheme. We compare in Figure~\ref{fig:sensitivityG} the resulting producer's certainty equivalent benefit with the one obtained when sending to the consumer the second--best contract with the linear approximation of the energy value function~\eqref{eq:fnonlin} and given by Proposition~\ref{prop:sblinear}~(i). In both cases, the initial condition of the contract is given by the reservation utility of the consumer given by Proposition~ \ref{prop:cru} with $f$ being given by relation \eqref{eq:fnonlin}. Thus, Figure~\ref{fig:sensitivityG}  just measures the benefit loss issued from the linear approximation of the energy value function. Without surprise, we oberve that the more the concavity of the energy value function, the more the linear approximation induces a loss of benefit for the producer. In the case of one usage only, a five fold multiplication of the concavity of $f$ leads to a loss of 1~pence out of 4. The introduction of more usages has two effects: a general decrease of benefit independent of the linearisation and an effect induced by this approximation. The linear approximation of concavity can reduce by half the benefit of the producer.

\begin{figure}[th!]
\begin{center}
\includegraphics[width=0.33\textwidth]{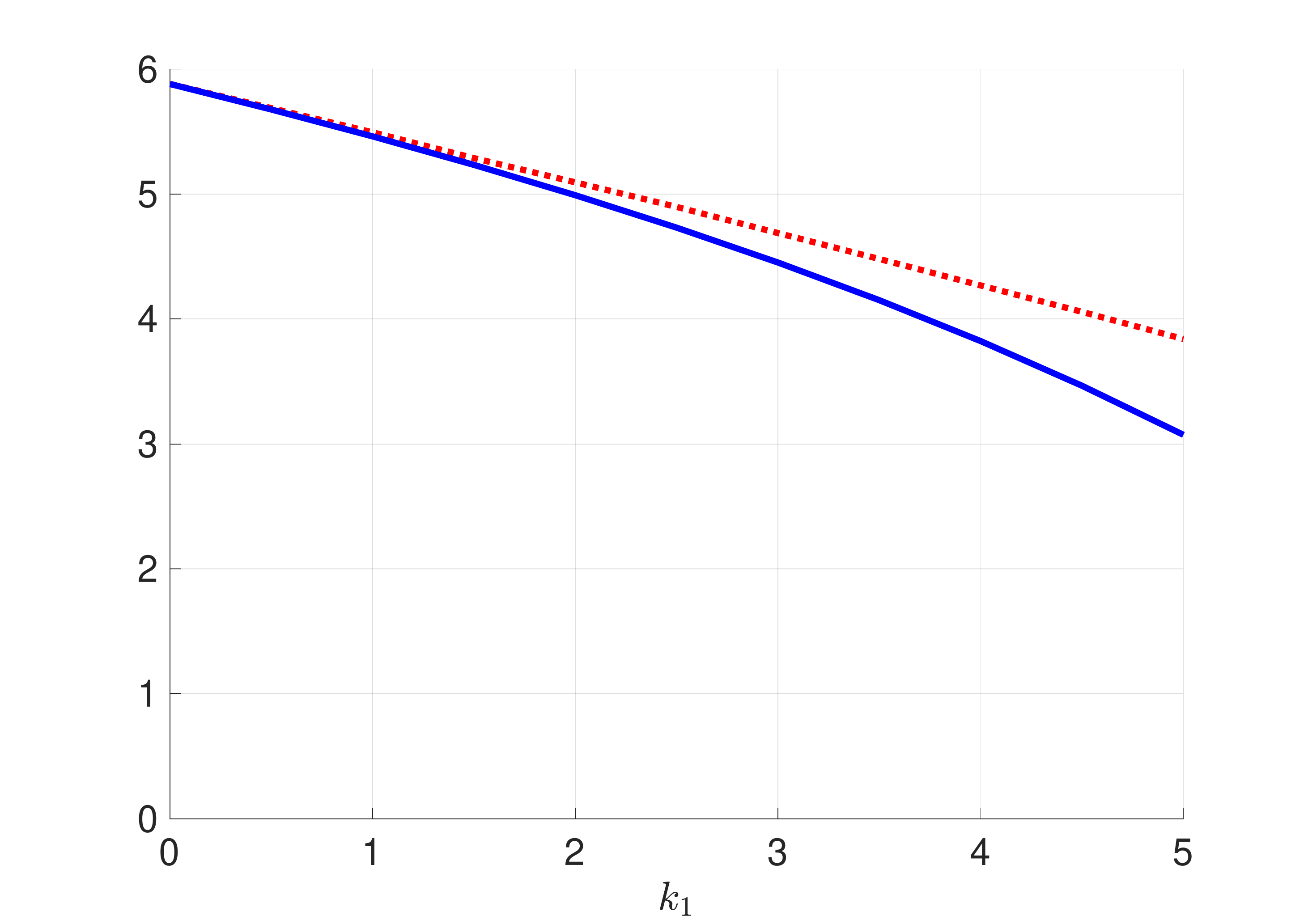}\includegraphics[width=0.33\textwidth]{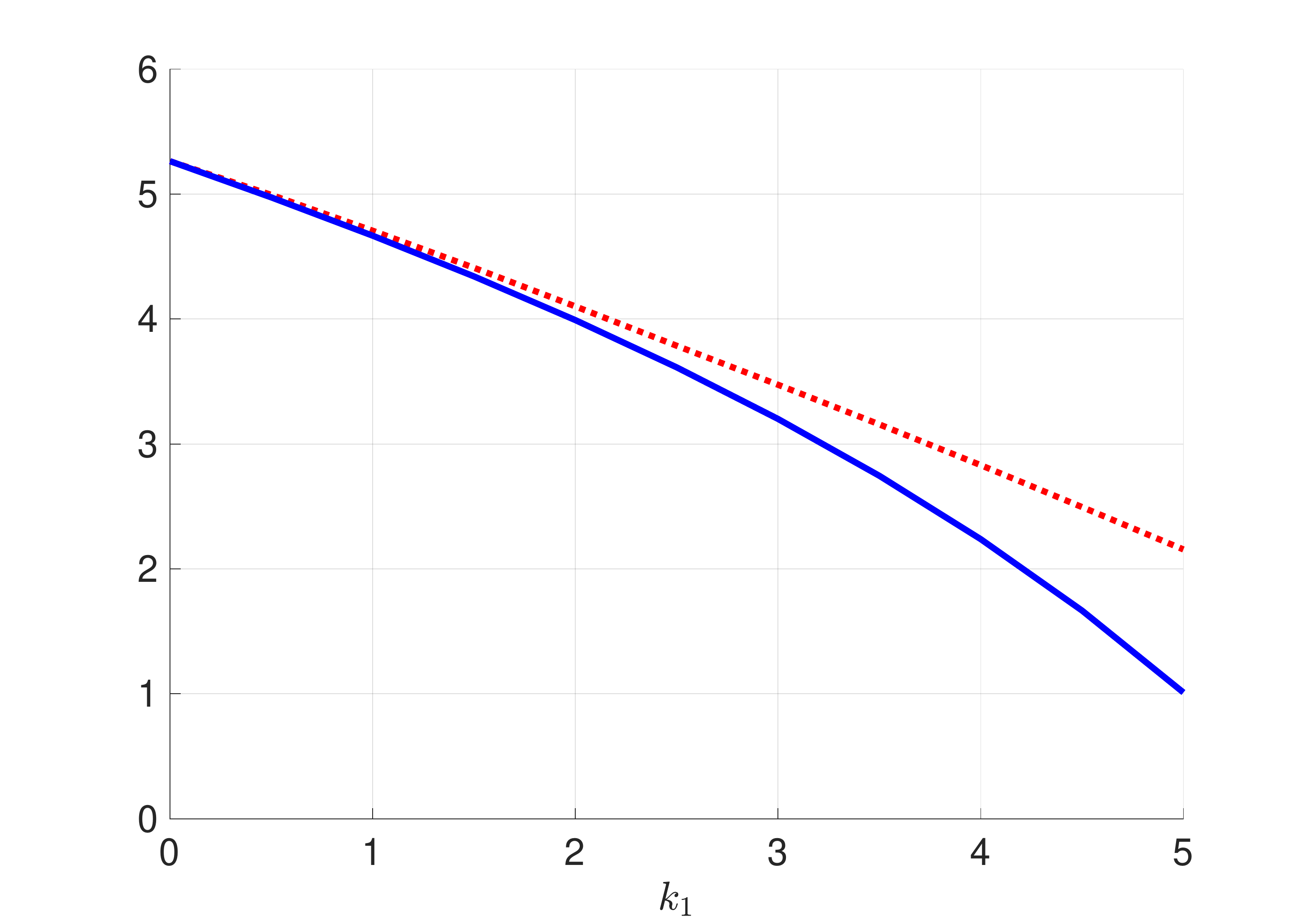}
\includegraphics[width=0.33\textwidth]{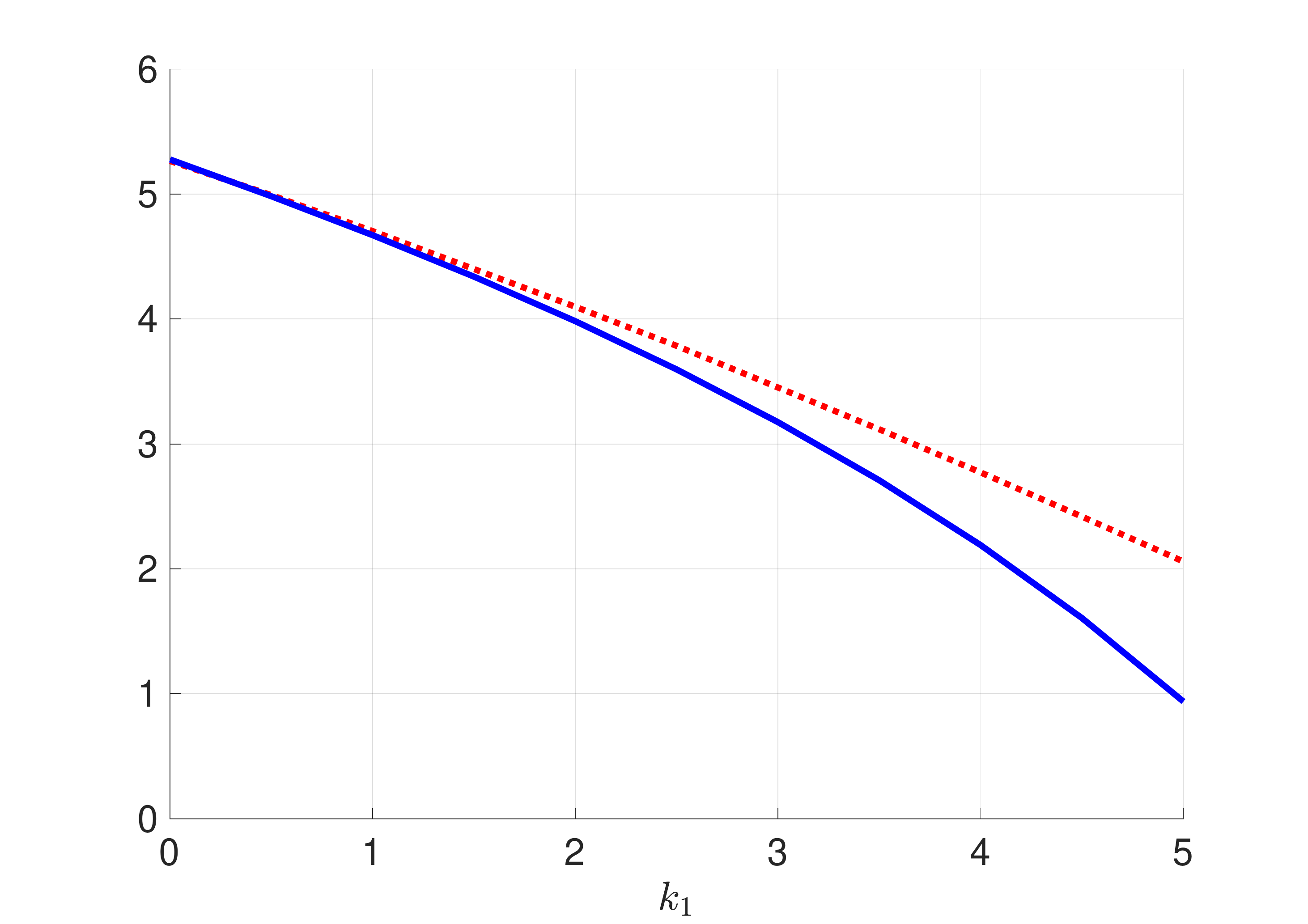}
\caption{Producer certainty equivalent benefit with one (Left), two (Middle) and four (Right) usages  with the linear approximation of $f$ (blue continuous line) and with the non-linear $f$ (red dotted line).}
        \label{fig:sensitivityG}
        \end{center}
\end{figure}

Figure~\ref{fig:sensitivityS} shows the total volatilities oberved under the second--best optimal contract with the linear approximation of $f$ compared to its true value. We observe that convavity increases the gap between the reduction of volatility that could be obtained with the nonlinear energy value function and its linear approximation. Nevertheless, the second--best contract with the linear approximation of $f$ still succeeds in achieving a significant decrease of the volatility before contracting, even with an increasing number of usages. 

\begin{figure}[th!]
\begin{center}
\includegraphics[width=0.33\textwidth]{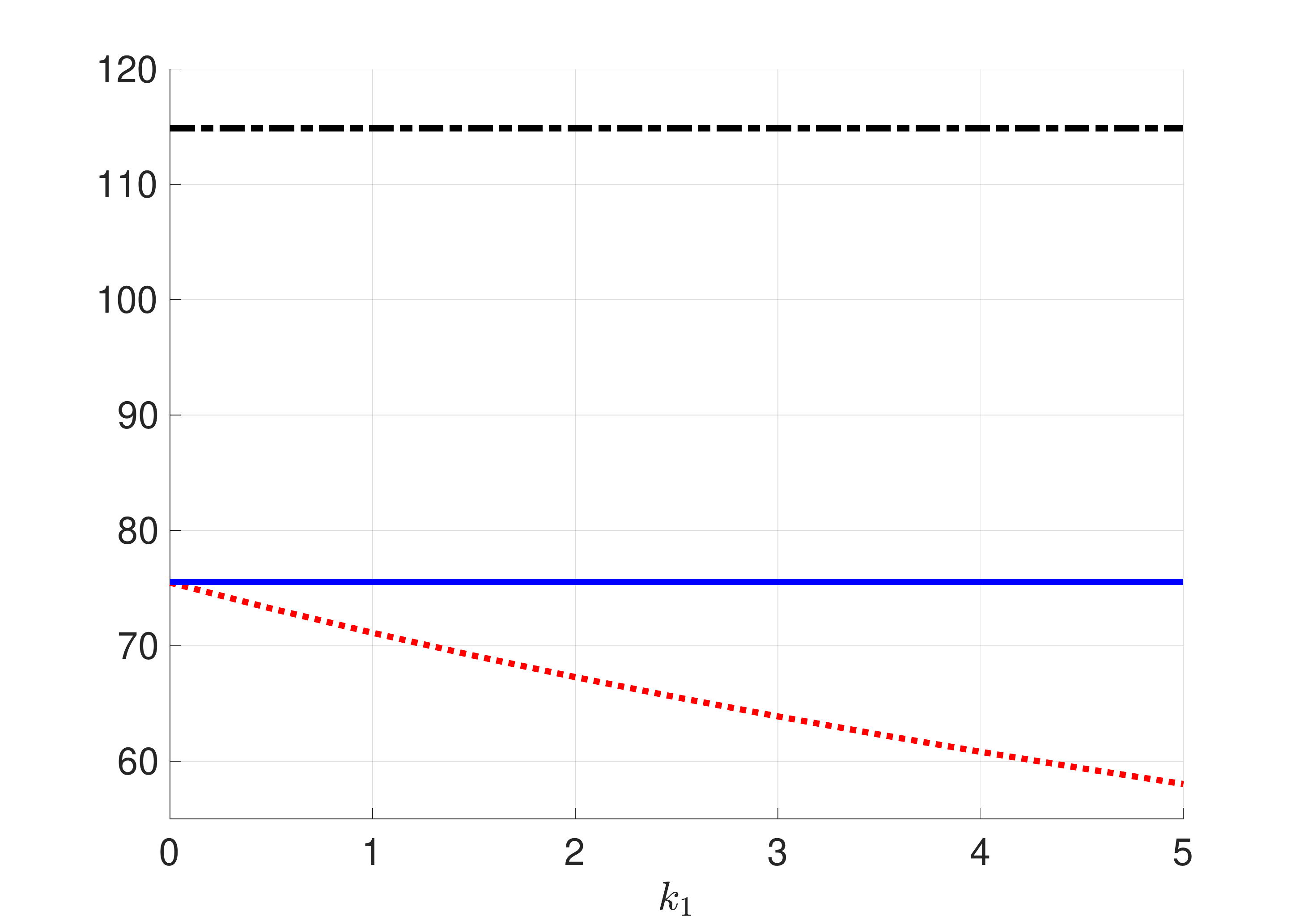}\includegraphics[width=0.33\textwidth]{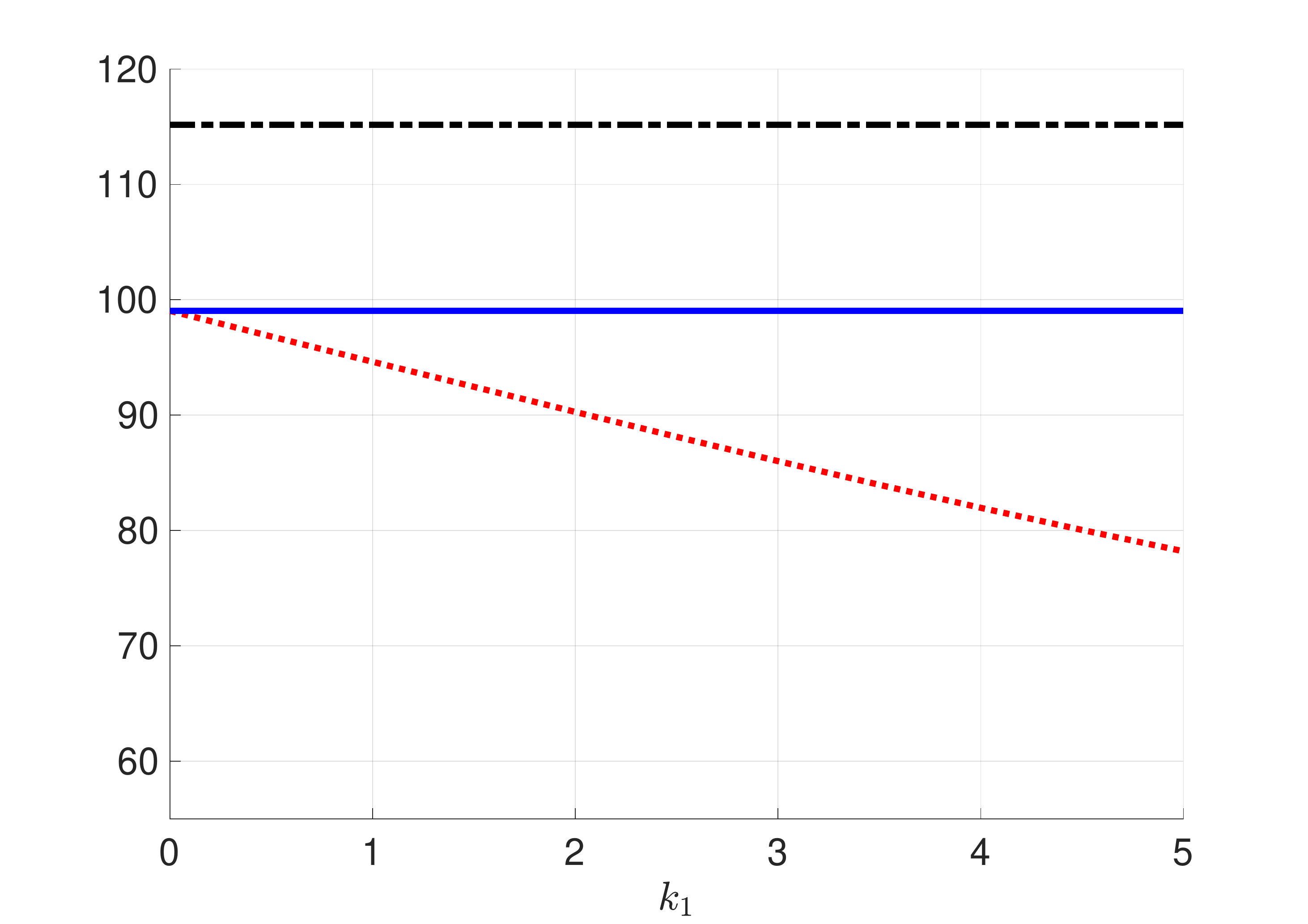}
\includegraphics[width=0.33\textwidth]{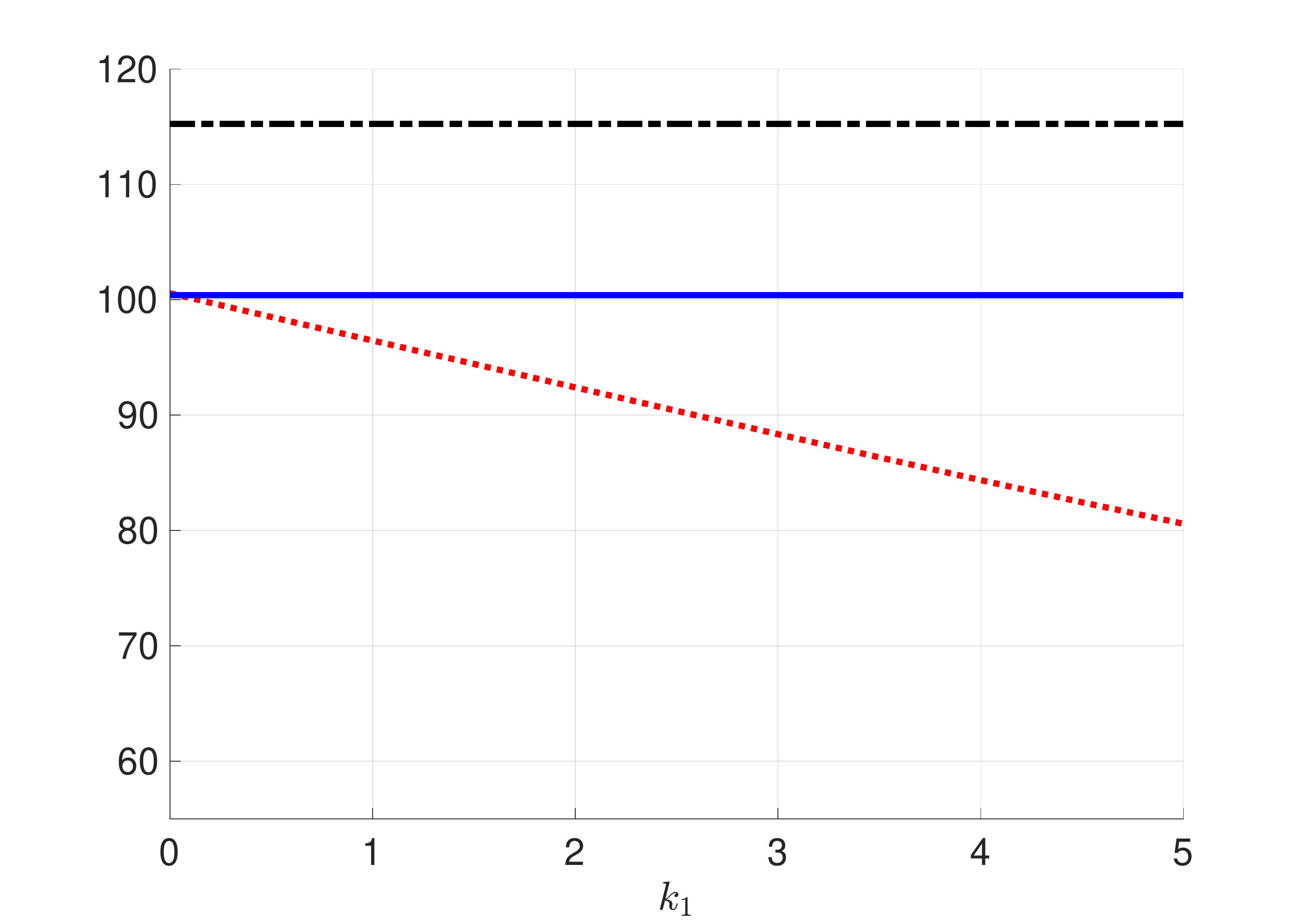} 
\caption{Volatilities (Watt) with one (Left), two (Middle) and four (Right) usages with the linear approximation of $f$ (blue dots) and with the non-linear $f$ (red stars). Black dotted line gives the volatility of the consumption without contract.}
        \label{fig:sensitivityS}
        \end{center}
\end{figure}

Figure~\ref{fig:sensitivityGR} presents the certainty equivalent benefit of the producer in the case of the implementation of the linear contract with responsiveness and without responsiveness as a function of the concavity of the energy value function of the consumer. Thus, this figure gives the loss induced not by the linearisation of the contract, but by the absence of responsiveness incentives in the linearisation of the energy value function. First, taking into account multiple usages reduces the difference of effects between the two contracts because of a degradation of the performance of the contract with responsiveness incentives. Second, as the concavity increases, the difference of benefits increase making the implementation of a responsiveness incentive mechanism more profitable.

\begin{figure}[th!]
\begin{center}
\includegraphics[width=0.33\textwidth]{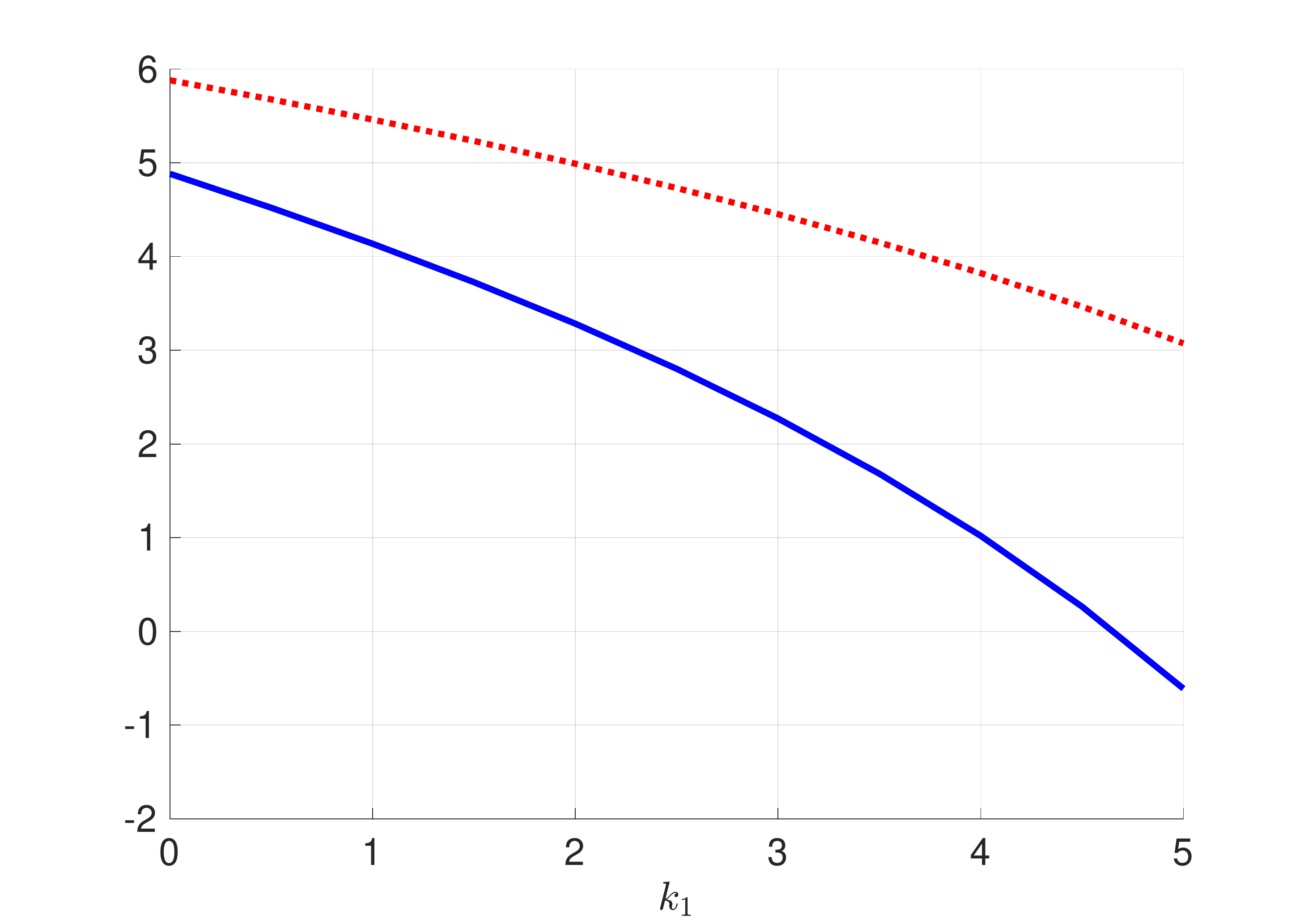}\includegraphics[width=0.33\textwidth]{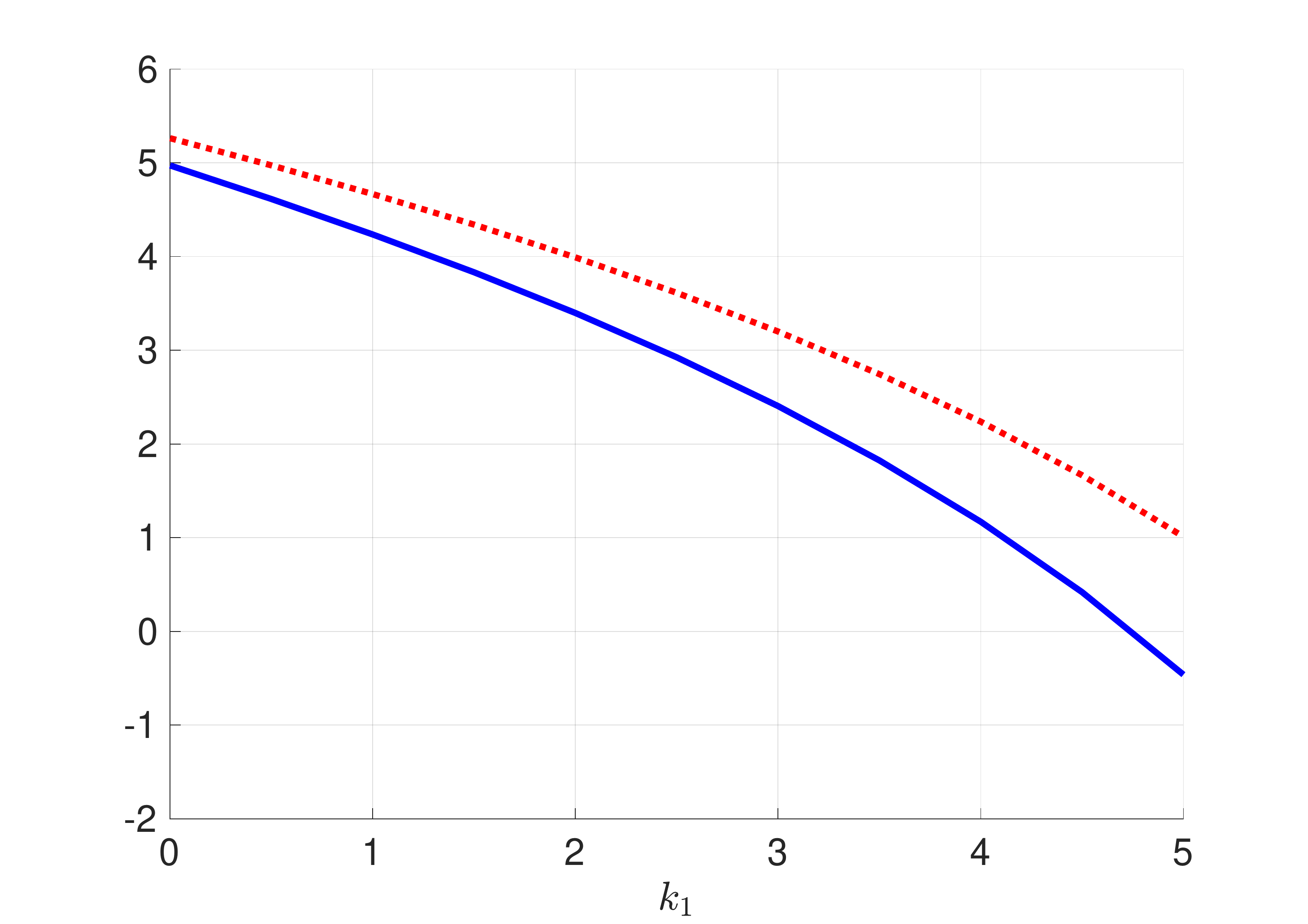}
\includegraphics[width=0.33\textwidth]{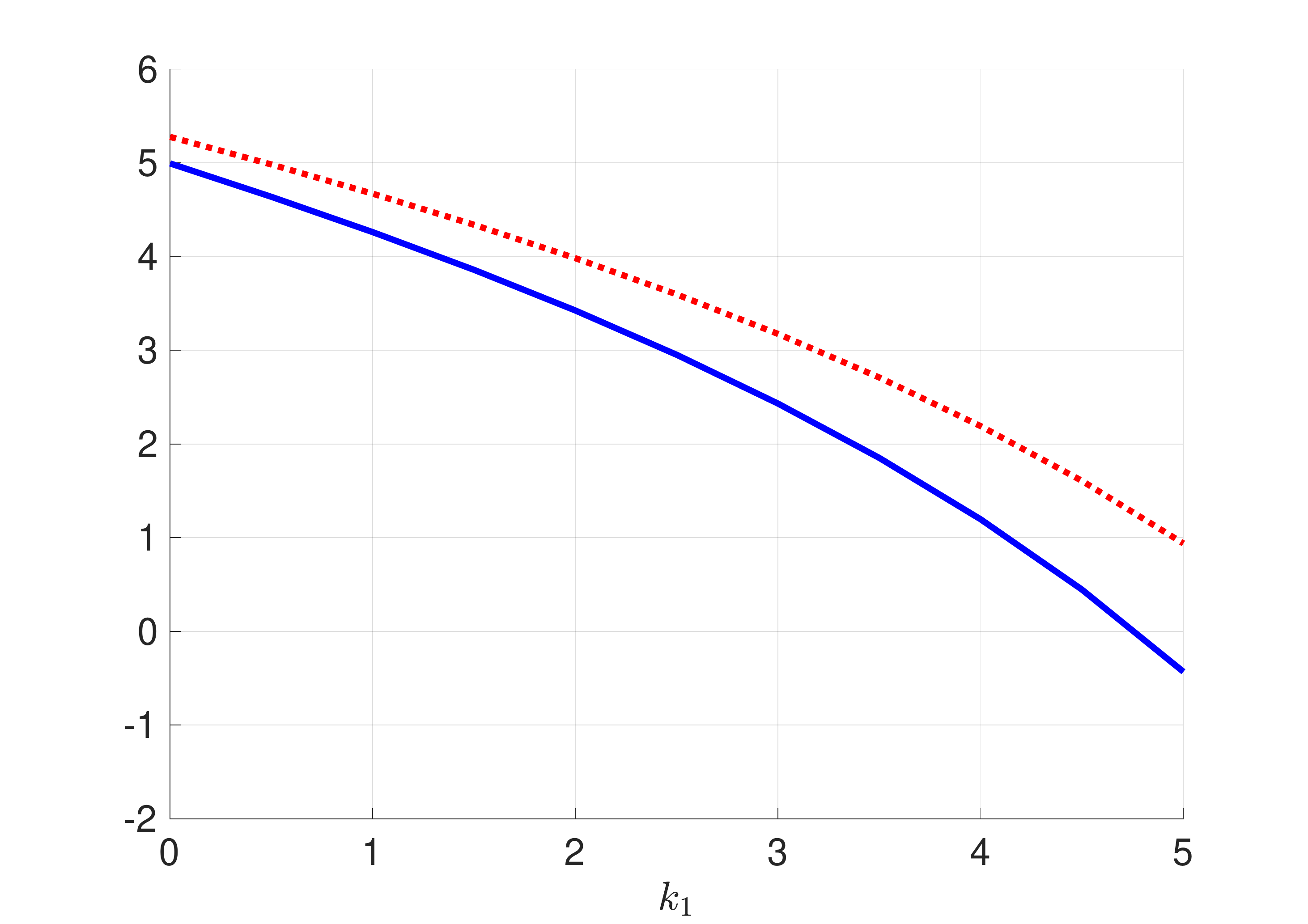} 
\caption{Certainty equivalent of the producer's benefit with the linear approximation contract with one (Left), two (Middle) and four (Right) usages with responsiveness incentive (red dotted line) and without responsiveness incentive (blue continuous line).}
        \label{fig:sensitivityGR}
        \end{center}
\end{figure}

\subsection{Practical issue}
\label{ssec:practice}

We have seen in the preceeding section that the optimal contract to induce an increase in the responsiveness of consumers to price signal should be written on the quadratic variation of the consumption. In discrete time $0 = t_0$ $< t_1$ $\cdots$ $t_n = T$, the quadratic variation $\langle X \rangle_T$ can be approximated by
$$ \langle X \rangle_T \approx \sum_{i=0}^{n-1} \big( X_{t_{i+1}} - X_{t_{i}} \big)^2.$$

Regarding energy consumption, households make a clear connection between the fact that they reduce the heating of the house and the reduction of their consumption. But, it is not obvious that domesic consumers first would understand why they should be charged a price proportional to that quantity and second, how the different actions they take during the day are related to it. The genuine form of the contract might not be acceptable for consumers. It is necessary to make simplifications to increase its potential acceptability.

A way one could think of is to index on the event--per--event variations of the average consumption. On event $k$, the producer measure the quantity $\bar X_k :=   \int_0^T X_t \drm t$ and charges the consumer a cost proportional to the quantity $\mathbb V_k :=    | \bar X_k - X_{\rm c} |$ where $X_{\rm c}$ is a contractualised targeted consumption in the spirit of Chao (2011) \cite{Chao11}. This contract is clearly sub--optimal: we lose the fact that the incentive price for responsiveness should be higher in the beginning of the price event, we loose the fact that the contract should be written on the quadratic variation, and not on the $L^1$--norm and so on. But, we gain a simple way to provide incentives to the consumer to remain as close as possible to a given pattern of consumption. The quantity $\mathbb V_k$ is measured in kWh and can be understood to be charged at a price measured in \euro/kWh just as the energy. 


\section{Conclusion}
\label{sec:conclusion}

We presented in this paper a new point of view on the demand response contract using the moral hazard problem in Principal--Agent framework, and showed how it makes it possible to reduce the average consumption, while improving the responsiveness of the consumer. We provided a closed--form expression for the optimal contract in the case of constant marginal cost and value of energy. We showed that the optimal contract has a rebate form and that the prices for energy and volatility differs from their marginal cost or value. We also showed how the optimal contract allows the system to bear more risk.  The calibration of our model to pricing trial data predicts that the cost of efforts of the consumer to reduce his average consumption will lead to significant benefits for producers and significant increase in the responsiveness of consumers and thus, on the efficiency of demand response programs. These predictions are testable. If our claim is true, the indexing of the payment to consumer on their regularity of consumption across price events should deeply enhance the efficiency of demand response programs. We do call for experiments to test this prediction.

\appendix

\section{Technical proofs}
\subsection{Proof of Proposition \ref{prop:crulinear} {\rm Consumer's behaviour without contract} }
\label{app:cru}

We provide first the following characterisation of the consumer's reservation utility.

\begin{Proposition} \label{prop:cru}
Assume that $f$ is concave, non--decreasing, and Lipschitz. Then the following holds.

\medskip
{\rm (i)} The consumer reservation utility is concave in $X_0$, and is given by $R_0=-e^{-r E(0,X_0)}$, where the corresponding 
certainty equivalent $E$ is a viscosity solution of the {\rm HJB} equation
 \begin{equation}\label{eq:PDE}
 \partial_t E 
 + H_{\rm v}\big(E_{xx}-rE_x^2\big) 
 + f 
 = 0~\mbox{on}~ [0,T)\times\R,
~\mbox{and}~ 
 E(T,x) = 0,\; x\in\R,
 \end{equation}
with growth controlled by $|E(t,x)|\le C(T-t)|x|$, for some constant $C$.

\medskip
{\rm (ii)} Assume that the PDE \eqref{eq:PDE} has a $C^{1,2}$ solution $E$ with growth controlled by $|E(t,x)|\le C(T-t)|x|$, for 
some constant $C$; then the optimal effort of the consumer is defined by the feedback controls
\[
 a^0 := 0
 \mbox{ and } 
 b^0_j := 1 \wedge \big(\lambda_j (E_{xx}-rE_x^2)\big)^{-\frac12 },\; j=1,\dots,d.
\]
\end{Proposition}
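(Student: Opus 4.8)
The plan is to treat the reservation-utility problem \eqref{eq:AgentObj} with $\xi=0$ as a stochastic control problem for the Agent and to exploit the CARA structure through a certainty-equivalent (logarithmic) change of variable. I would introduce the dynamic value function
\[
V(t,x) := \sup_{\P^\nu\in\Pc}\E^{\P^\nu}\Big[-\exp\Big(-r\textstyle\int_t^T\big(f(X_s)-c(\nu_s)\big)\,\drm s\Big)\,\Big|\,X_t=x\Big],
\]
so that $R_0=V_{\rm A}(0)=V(0,X_0)$. Since $V<0$, I write $V=-\mathrm{e}^{-rE}$, which defines the certainty equivalent $E$ and reduces the statement to an analysis of $E$. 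The whole argument is carried out in the weak (volatility-control) formulation of Cvitani\'c et al. (2018), under which $\Pc$ is the relevant control set.

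\textbf{The HJB equation and the vanishing of $\Hm$.} Applying the dynamic programming principle to $V$ together with It\^o's formula, and dividing the resulting equation by $-rV>0$, one separates the $\alpha$- and $\beta$-optimisations and recognises the two suprema as the definitions of $\Hm$ and $\Hv$ evaluated at $z=E_x$ and $\gamma=E_{xx}-rE_x^2$, yielding
\[
\partial_t E + \Hm(E_x) + \Hv\big(E_{xx}-rE_x^2\big) + f = 0,\qquad E(T,\cdot)=0.
\]
To recover \eqref{eq:PDE} I would then establish the monotonicity $E_x\ge0$: because $f$ is non-decreasing and the coefficients of \eqref{eq:demand1} do not depend on the state, a larger initial value produces a pathwise larger $X$, hence for every fixed control a larger payoff; as a supremum of non-decreasing functions, $x\mapsto E(t,x)$ is non-decreasing. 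By Proposition~\ref{prop:agentresponse}, $\widehat a(z)=0$ for $z\ge0$, so $\Hm(E_x)=0$ and the equation collapses to \eqref{eq:PDE}, which simultaneously identifies $a^0=0$. The terminal condition is immediate, the bound $|E(t,x)|\le C(T-t)|x|$ comes from the Lipschitz/linear growth of $f$ together with $E(T,\cdot)=0$, and the viscosity-solution property follows from the standard theory for such control problems in the weak formulation.

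\textbf{Concavity --- the main obstacle.} Concavity of $R_0$ in $X_0$ is the delicate point, equivalent to $E_{xx}-rE_x^2\le0$ (i.e.\ $V_{xx}\le0$). For a \emph{fixed} control, the translation invariance of \eqref{eq:demand1} gives $X^{X_0,\nu}=X_0+X^{0,\nu}$, so $X_0\mapsto\int_0^T f\big(X_0+X_s^{0,\nu}\big)\,\drm s$ is pathwise concave; composing with the concave non-decreasing map $x\mapsto-\mathrm{e}^{-rx}$ and taking expectation (equivalently, applying the concave, monotone entropic certainty-equivalent functional) shows that each fixed-control payoff is concave in $X_0$. The value is however a \emph{supremum} of such functions, so concavity does not follow formally. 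I would close this gap at the PDE level, checking that the $x$-concave envelope of $E$ is a subsolution of \eqref{eq:PDE} and invoking the comparison principle to dominate it by $E$; this concavity-preservation step is where the real work lies. In the linear case $f=\kappa x$ used in the body it is trivial, since there $E$ is affine in $x$ and the optimal feedback is state-independent.

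\textbf{Verification for (ii).} Given a $C^{1,2}$ solution $E$ with the stated growth, I define the feedback $a^0:=0$ and $b^0:=\widehat b\big(E_{xx}-rE_x^2\big)$ from Proposition~\ref{prop:agentresponse}, and consider, with $\Xi_t:=\int_0^t\big(f(X_s)-c(\nu_s)\big)\drm s + E(t,X_t)$, the process $Y^\nu_t:=U_{\rm A}(\Xi_t)$. A direct It\^o computation shows that its drift is the product of the positive factor $r\,\mathrm{e}^{-r\Xi_t}$ and the bracket
\[
\partial_t E + f - c_1(\alpha) - \alpha\cdot\mathbf{1}\,E_x - \tfrac12 c_2(\beta) + \tfrac12|\sigma(\beta)|^2\big(E_{xx}-rE_x^2\big),
\]
which, by \eqref{eq:PDE}, the identity $\Hm(E_x)=0$ and the definition of $\Hv$, is $\le0$ for every admissible $\nu$ and $=0$ for the candidate feedback. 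Hence $Y^\nu$ is a supermartingale, and a martingale under $\P^{(a^0,b^0)}$; taking expectations and using $E(T,\cdot)=0$ gives $J_{\rm A}(0,\P^\nu)\le-\mathrm{e}^{-rE(0,X_0)}$ with equality for the candidate, which proves the value identity and the optimality of $(a^0,b^0)$. The remaining care is integrability: the growth bound on $E$, the compactness of $A$ and $B$, and the exponential-moment estimates behind \eqref{contract-growth} justify the localisation removal and the uniform integrability needed to upgrade the local statements to genuine (super)martingale ones.
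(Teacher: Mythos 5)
Your proposal follows the same overall architecture as the paper (certainty-equivalent change of variable, HJB identification, verification via a super/martingale argument), but it contains two genuine gaps, one of which you yourself flag and then leave open.

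The main gap is the concavity of $R_0$ in $X_0$, which is part of statement (i): sketching a concave-envelope/comparison strategy and noting that ``this is where the real work lies'' does not discharge it, so as written part (i) is unproved. Moreover, the heavy PDE machinery you propose is unnecessary. The clean argument --- and the reason the paper can call concavity a direct consequence of the concavity of $f$ and the convexity of $c_2$ --- is to reparametrise the volatility control by $\Sigma_t:=\sigma(\beta_t)$, i.e.\ $\Sigma_{t,j}=\sigma_j\sqrt{\beta_{t,j}}$, which takes values in the convex set $\prod_{j}[\sigma_j\sqrt{\varepsilon},\sigma_j]$. In this parametrisation the state $X_t=X_0+\int_0^t\Sigma_s\cdot\drm W_s$ is affine in $(X_0,\Sigma)$, while the cost $c_2(\beta_t)=\sum_j\frac{\sigma_j^2}{\lambda_j}\big(\sigma_j^2\Sigma_{t,j}^{-2}-1\big)$ is convex in $\Sigma_t$; hence the pathwise payoff $\int_0^T\big(f(X_s)-\tfrac12 c_2\big)\drm s$ is \emph{jointly} concave in $(X_0,\Sigma)$ ($f$ concave non-decreasing composed with an affine map, minus a convex cost), and composing with the concave increasing $U_{\rm A}$ and taking expectations preserves this. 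A supremum, over a convex set of controls, of a function jointly concave in the pair (initial condition, control) is concave in the initial condition --- this standard fact is exactly what resolves your ``supremum of concave functions'' objection. (In the weak formulation the same argument is run through the induced laws; the paper's footnote explicitly sets such technicalities aside.)

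The second issue is a circularity in your part (ii). Your verification over all $\nu=(\alpha,\beta)$ bounds the drift bracket by $\Hm(E_x)$ and then invokes ``the identity $\Hm(E_x)=0$''. But in (ii) the function $E$ is an \emph{arbitrary} $C^{1,2}$ solution of \eqref{eq:PDE} with the stated growth: your monotonicity argument for $E_x\ge 0$ was probabilistic and applies to the value function, not to this assumed solution, and \eqref{eq:PDE} contains no term from which nonnegativity of $E_x$ could be read off. Without $E_x\ge0$ the bound on the bracket is $\Hm(E_x)\ge0$ and the supermartingale property may fail. The paper avoids this by ordering the argument differently: it first discards the drift control at the level of the original problem --- effort $\alpha$ is costly and, pathwise, only lowers $X$ and hence $f(X)$, so $J_{\rm A}\big(0,\P^{(\alpha,\beta)}\big)\le J_{\rm A}\big(0,\P^{(0,\beta)}\big)$ --- and only then writes the HJB for the resulting pure volatility-control problem, so that \eqref{eq:PDE} is its exact HJB equation and the verification requires no monotonicity of the assumed solution. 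You already have all the ingredients for this pathwise reduction; performing it first, as the paper does, repairs (ii) and also shortens your derivation of \eqref{eq:PDE} in (i).
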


\begin{proof} 
$(i)$ Since $f$ is increasing, the consumer has no reason to make an effort on the drift of consumption deviation, as no compensation 
is offered for this costly effort. However, this argument does not apply to the effort on the volatility, due to the consumer's risk 
aversion. As the consumer has constant risk aversion utility with parameter $r$, her reservation utility reduces to
 \be\label{eq:R2}
 R_0
 := 
 \sup_{\P^{(0,\beta)}\in\Pc}  
 \E^{\mathbb \P^{(0,\beta)} }\Big[ -e^{-r \int_0^T ( f(X_t) - \frac12 c_2(\beta_t) ) \drm t}
                                             \Big] .
 \ee
The concavity of $R_0$ in the initial data $X_0$ is a direct consequence of the concavity of $f$ and the convexity of $c_2$. By 
standard stochastic control theory, it follows that $R_0=R(0,X_0)$, where the function $R$ is the dynamic version of the 
reservation utility, with final value $R(T,x)=-1$, and is a viscosity solution of the corresponding HJB equation:
$$
 0
 =
 \partial_tR - rfR
 +\sup_{b\in(0,1]^d} \frac12\big( |\sigma(b)|^2 R_{xx}+rc_2(b)R\big)
 =
 \partial_tR - rf R
 -rR\,H_{\rm v}\Big(\frac{R_{xx}}{-rR}\Big).
$$
Denote by $X^{t,x}_s:=x+X_{s-t}$ the shifted canonical process started from initial data $(t,x)$. As $f$ is Lipschitz, notice that 
\begin{align*}
 R(t,x)
 \ge
 \E^{\P^{0,1}}\Big[ -e^{-r\int_t^T f(X^{t,x}_s) \drm s} \Big]
 &\ge
 -\E^{\P^{0,1}}\Big[ e^{r |f'|_\infty \int_t^T |X^{t,x}_s| \drm s} \Big]
 \\
 &\ge
 -\E^{\P^{0,1}}\Big[ e^{r |f'|_\infty ((T-t)|x|+\int_t^T |X^{t,0}_s| \drm t)} \Big]
 \ge
 -C_1e^{r |f'|_\infty (T-t)|x|},
 \end{align*}
where 
$$C_1:=\E^{\P^{0,1}}\Big[ e^{r |f'|_\infty \int_0^T |X^{t,0}_s| \drm s)} \Big]<\infty,$$ 
since $X^{t,0}_s$ is a centred Gaussian random variable for all $s\in[t,T]$. As $c_2\ge 0$, we also have
\[
 R(t,x)
 \le
 \sup_{\P^{(0,\beta)}\in\Pc}
 \E^{\P^{0,\beta}}\Big[ -e^{-r\int_t^T f(X^{t,x}_s)\drm s} \Big]
 =
 \E^{\P^{0,1}}\Big[ -e^{-r\int_t^T f(X^{t,x}_s)\drm s} \Big]
  \le
 -C_2e^{-r |f'|_\infty (T-t)|x|},
\]
 where 
 $$C_2:=\E^{\P^{0,1}}\Big[ e^{-r |f'|_\infty \int_0^T |X^{t,0}_s| \drm s)} \Big]<\infty,$$ 
 by the same argument as previously.
 
Then, the certainty equivalent function $E$, defined by $R=:-e^{-rE}$, satisfies the PDE \eqref{eq:PDE}, and has growth controlled 
by $E(t,x)\le (C_1\vee C_2)(T-t)|x]$. 

$(ii)$ We now assume that the PDE \eqref{eq:PDE} has a $C^{1,2}$ solution $E$ with growth controlled by $|E(t,x)|\le C(T-t)|x|$. 
Then $\hat R:=-e^{-rE}$ is also $C^{1,2}([0,T]\times\R)$. Denote $K^\beta_t:=e^{-r \int_0^t ( f(X_s) - \frac12 c_2(\beta_s) ) \drm s}$, 
and $T_n:=\inf\{t>0:|X_t-X_0|\ge n\}$, we compute by It\^o's formula that for all $\P^{(0,\beta)}\in\Pc$,
\begin{align*}
 \hat R(0,X_0)
 =
 \E^{\P^{0,\beta}}
 \bigg[K^\beta_{T_n}\hat R(T_n,X_{T_n}) 
        -\int_0^{T_n} K^\beta_t
                               \Big(\partial_t \hat R 
                                      + \frac12|\sigma(\beta_t)|^2v_{xx}
                                      - r\Big(f-\frac12 c_2(\beta_t)\Big) \hat R
                               \Big)(t,X_t)\drm t 
 \bigg]
 \\
 \ge
 \E^{\P^{0,\beta}}
 \Big[K^\beta_{T_n}\hat R(T_n,X_{T_n})\Big]
 \;\longrightarrow\;
 \E^{\P^{0,\beta}}
 \Big[K^\beta_{T}\hat R(T,X_T)\Big] 
 = 
 \E^{\P^{0,\beta}}
 \Big[-K^\beta_{T}\Big],
\end{align*}
where the local martingale part verifies that $\E^{\P^{0,\beta}}\big[\int_0^{T_n} K^\beta_t\hat R_x(t,X_t)
\sigma(\beta_t)\drm W_t\big]=0$, by the fact that $\hat R_x$ is bounded on $[0,T_n]$ and $\sigma(\beta)$ is bounded. The second 
inequality follows from the PDE satisfied by $\hat R$, and the last limit is obtained by using the control on the growth of $R$ 
together with the final condition $\hat R(T,.)=-1$. By the arbitrariness of $\P^{(0,\beta)}\in\Pc$, this implies that $\hat R(0,X_0)\ge 
R_0$.

To prove equality, we now observe from Proposition \ref{prop:agentresponse} that by choosing $b^0(t,x):=\widehat b(E_{xx}-
E_x^2)(t,x)$ as defined in Proposition \ref{prop:agentresponse}, the (unique) inequality in the previous calculation is turned into an 
equality provided that the stochastic differential equation $dX_t=\sigma(b(t,X_t)) \drm W_t$ has a weak solution. This, in turn, is 
implied by the fact that the function $\sigma\circ b$ is bounded and continuous, see Karatzas and Shreve \cite[Theorem 5.4.22 
and Remark 5.4.23]{karatzas1991brownian}. Consequently, $\hat R(0,X_0)=R_0$, and $(a^0,b^0)$ are optimal feedback controls.
\qed
\end{proof}

We now prove the proposition. 

\begin{Proposition} {\rm (Consumer's behaviour without contract)}
Let $f(x) = \kappa \, x,$ $x\in\R$, for some $\kappa \geq 0$. Then,  $V_A(0) = U_A(\kappa X_0 T + E_0(T))$ where   
\[
 E_0(T) := \int_0^T H_{\rm v}\big(-\gamma(t)\big)  \drm t,
 \mbox{ and }
 \gamma(t) :=  -r \kappa^2 (T-t)^2.
 \]
The consumer's optimal effort on the drift and on each volatility usage are respectively 
 \[
 a^0=0, 
 \text{ and } 
 b_j^0(t) := \varepsilon \vee (1 \wedge \big( \lambda_j |\gamma(t)| \big)^{-\frac{1}{2}}),\; j=1,\dots,d,
 \]
thus inducing an optimal distribution $\P^0$ under which the deviation process follows the dynamics $\mathrm{d}X_t=\widehat{\sigma}
\big(b^0(t)\big)\cdot \mathrm{d}W_t,$ for some $\P^0-$Brownian motion $W$.
\end{Proposition}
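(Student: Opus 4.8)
The plan is to observe that $V_{\rm A}(0)$, obtained by setting $\xi=0$ in the consumer's problem \eqref{eq:AgentObj}, is exactly the reservation–utility quantity $R_0$ treated in Proposition~\ref{prop:cru}, and then to solve the associated HJB equation \eqref{eq:PDE} in closed form in the linear case $f(x)=\kappa x$. First I would check that $f(x)=\kappa x$ with $\kappa\ge 0$ is concave, non-decreasing and Lipschitz, so that Proposition~\ref{prop:cru} applies and the certainty equivalent $E$, defined through $V_{\rm A}(0)=R_0=-\mathrm{e}^{-rE(0,X_0)}=U_{\rm A}(E(0,X_0))$, solves $\partial_t E+\Hv(E_{xx}-rE_x^2)+\kappa x=0$ on $[0,T)\times\R$ with $E(T,\cdot)=0$ and linear growth. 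Because $f$ is increasing there is no benefit in exerting drift effort, which is precisely why the drift control disappears from the equation and $a^0=0$ will turn out optimal.

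Next I would search for an explicit solution via the affine ansatz $E(t,x)=\kappa(T-t)x+\phi(t)$ for an unknown deterministic $\phi$ with $\phi(T)=0$. Here $E_x=\kappa(T-t)$ and $E_{xx}=0$, so that $\gamma(t):=E_{xx}-rE_x^2=-r\kappa^2(T-t)^2$, matching the function $\gamma$ in the statement. Substituting into the PDE, the two $x$-linear terms cancel, since $\partial_t[\kappa(T-t)x]=-\kappa x$ is offset by $f(x)=\kappa x$, leaving the ordinary differential equation $\phi'(t)+\Hv(\gamma(t))=0$ with $\phi(T)=0$. Integrating gives $\phi(t)=\int_t^T \Hv(\gamma(s))\,\drm s$, so that $\phi(0)=E_0(T)$ and $E(0,X_0)=\kappa X_0 T+E_0(T)$, which is the claimed expression for $V_{\rm A}(0)$.

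It then remains to certify that this affine candidate really is the certainty equivalent and not merely a formal solution. I would verify that $E$ is $C^{1,2}$ with the required linear growth: smoothness in $x$ is immediate, $\phi$ is $C^1$ once one checks that $s\mapsto\Hv(\gamma(s))$ is continuous and integrable on $[0,T]$ (which follows from the explicit minimiser of Proposition~\ref{prop:agentresponse} and the boundedness of $B=[\varepsilon,1]$), and $|E(t,x)|\le \kappa(T-t)|x|+\sup_{[0,T]}|\phi|$ gives the growth bound. This places us squarely in the hypotheses of Proposition~\ref{prop:cru}(ii), whose verification argument identifies the ansatz value with $R_0=V_{\rm A}(0)$ and the optimal feedback as $a^0=0$ and $b^0(t)=\widehat b(\gamma(t))$. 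Specialising the formula of Proposition~\ref{prop:agentresponse} to the negative argument $\gamma(t)=-r\kappa^2(T-t)^2$ yields $\widehat b_j(\gamma(t))=\big(1\wedge(\lambda_j|\gamma(t)|)^{-1/2}\big)\vee\varepsilon=b_j^0(t)$; since $a^0=0$ the induced dynamics are driftless, $\drm X_t=\widehat\sigma(b^0(t))\cdot\drm W_t$, and the existence of the corresponding weak solution $\P^0$ is immediate because $\widehat\sigma\circ b^0$ is a bounded continuous (indeed purely deterministic) function of time.

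The computation itself is routine; the two steps requiring genuine care are, first, justifying that the drift effort is identically zero so that the problem collapses onto the pure-volatility HJB equation of Proposition~\ref{prop:cru} (guaranteed by monotonicity of $f$), and second, the verification step, namely confirming that the affine ansatz has exactly the regularity and growth demanded by Proposition~\ref{prop:cru}(ii), so that it coincides with the value function and delivers the optimal controls rather than being an extraneous solution of the equation.
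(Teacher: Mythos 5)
Your proof is correct and follows essentially the same route as the paper's: reduce to the reservation--utility characterisation of Proposition~\ref{prop:cru}, plug the affine ansatz $E(t,x)=\kappa(T-t)x+\phi(t)$ into the HJB equation, solve the resulting ODE for $\phi$, and invoke the verification statement (ii) together with Proposition~\ref{prop:agentresponse} to identify the optimal feedbacks $a^0=0$ and $b^0(t)=\widehat b(\gamma(t))$ --- your handling of the regularity, growth, and weak--existence points is if anything more explicit than the paper's one--line conclusion. One remark: your identification $\phi(0)=\int_0^T \Hv(\gamma(s))\,\drm s=E_0(T)$ uses the argument $\gamma(t)$ where the statement writes $\Hv(-\gamma(t))$, but the paper's own proof makes exactly the same identification (the PDE forces the argument $E_{xx}-rE_x^2=\gamma(t)<0$, consistent with a risk--aversion penalty), so this is a sign typo in the statement rather than a gap in your argument.
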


\proof
The value $V^A(0)$ is concave in $X_0$, and is given by $V_A(0)=-\mathrm{e}^{-r E(0,X_0)}$, where the corresponding  certainty equivalent $E$ is a viscosity solution of the {\rm HJB} equation
\[
- \partial_t E 
 = f 
 + H_{\rm v}\big(E_{xx}-rE_x^2\big) 
 ~\mbox{on}~ [0,T)\times\R,
~\mbox{and}~ 
 E(T,x) = 0,\; x\in\R.
\]
By directly plugging the guess $E(t,x)=C(t)x+E_0(t)$ in the PDE \eqref{eq:PDE}, we obtain
 \[
 C^\prime(t) x + E^\prime_0(t) + H_{\rm v}\big(-rC^2(t)\big)+\kappa x=0,
 \mbox{ with }
 C(T)=E_0(T)=0.
 \]
This entails $C(t)=\kappa(T-t)$ and $E_0(t)=\int_t^T H_{\rm v}\big(-rC^2(s)\big)\mathrm{d}s$, $0\leq t\le T$. Finally the expression of the 
maximiser $b^0$ follows from Proposition \ref{prop:agentresponse}. Since this smooth solution of the PDE has the appropriate 
linear growth, we conclude from Proposition \ref{prop:cru} $(ii)$ that it is indeed the value function inducing $V_A(0)$.\ep

\subsection{Proof of Proposition \ref{prop:fblinear} {\rm First--best contract}}
\label{app:fblinear}

We provide here the solution of the first--best problem \eqref{eq:FB} corresponding to the case where the producer chooses both the 
contract $\xi$ and the level of effort of the consumer $\nu$, under the constraint that the consumer's satisfaction is above the 
reservation utility. Introducing a Lagrange multiplier $\ell\ge 0$ to penalise the participation constraint, and applying the classical 
Karush--Kuhn--Tucker method, we can formulate the producer's first--best problem as
 \begin{equation}
 \label{FB-Lagrange}
\Vfb
 =
 \inf_{\ell\ge 0} \bigg\{
 -\ell R
 +\sup_{(\xi,\P^\nu)} \E^{\P^\nu} \big[ U \big(- \xi - \mathcal K^\nu_T \big) 
                                                            + \ell U_{\rm A}\big( \xi+ G^\nu_T \big)
                       \big] \bigg\},
 \end{equation}
where $\Gc^\nu_T :=  \int_0^T g(X_s) \mathrm{d}s +  \frac{h}{2} \langle X\rangle_T$ and $\Kc^\nu_T :=  \int_0^T(f(X_s)-c(\nu_s))\mathrm{d}s$. The 
first--order conditions in $\xi$ are
\[
- U'\big(-\xi_\ell - \Gc^\nu_T\big) 
+ \ell U_{\rm A}'\big(\xi_\ell + \Kc^\nu_T\big) = 0.\]
In view of our specification of the utility functions, this provides the optimal 
contract payment for a given Lagrange multiplier $\ell$
 \begin{equation} \label{ximu}
 \xi_\ell
 :=
 \frac{1}{p+r} \ln{\bigg(\frac{r\ell}{p}\bigg)}
 - \frac{p}{p+r} \Gc^\nu_T - \frac{r}{p+r} \Kc^\nu_T.
 \end{equation}
Substituting this expression in \eqref{FB-Lagrange}, we see that the Principal's first--best problem reduces to
 \[
\Vfb
 =
 \inf_{\ell\ge 0}\bigg\{ \ell\bigg(-  R
                                       + \bigg(1+\frac{r}{p}\bigg)  \bigg(\frac{r\ell}{p}\bigg)^{\frac{-r}{r+p}} \bar V
                               \bigg)\bigg\},
 \mbox{ with }
 \bar V
  :=
  \sup_{\P^\nu} \E^{\P^\nu} \Big[ -\mathrm{e}^{-\rho\big( \int_0^T ((f-g)(X_t)-c(\nu_t)) \mathrm{d}t - \frac{h}{2} \langle X\rangle_T \big)}
                                           \Big],
 \]
and $\frac{1}{\rho}:=\frac{1}{r}+\frac{1}{p}$. Notice that $\bar V$ does not depend on the Lagrange multiplier $\ell$. Then direct 
calculations lead to the optimal Lagrange multiplier and first--best value function
 \begin{equation}\label{FB-barV}
 \ell^\star
 := 
 \frac{p}{r}\bigg(\frac{\bar V}{R}\bigg)^{1+\frac{p}{r}},
 \mbox{ so that }
 V^{\mbox{\footnotesize\sc fb}}
 =
 R
 \bigg(\frac{\bar V}{R}\bigg)^{1+\frac{p}{r}}.
 \end{equation}
This lead to the following proposition.

 \begin{Proposition} \label{prop:fb}
Assume that $f-g$ is Lipschitz continuous.Then

\vspace{0.5em}
\noindent {$(i)$} $\bar V =  - \mathrm{e}^{ - \rho \bar v(0,X_0)}$, where $\bar v$ has growth $|\bar v(t,x)|\le C(T-t)|x|$, for some constant $C>0$, and is a viscosity solution of 
the {\rm PDE}
 \begin{equation}
 \label{ed:vbar}
 -\partial_t \bar v
 =  (f-g)  + H_{\rm m}(\bar v_x) + H_{\rm v}\big( \bar v_{xx} - \rho \bar v_x^2 - h\big),
 ~\mbox{on}~[0,T)\times\R,~ \mbox{and}~
 \bar v(T,.)=0,
 \end{equation}
 so that, by \eqref{FB-barV}, the first--best value function $\Vfb= U( \bar v(0,X_0) -L_0 )$.

\vspace{0.5em}
\noindent {$(ii)$} If in addition $\bar v$ is smooth, the optimal efforts to induce a reduction of the consumption deviation and of its volatility 
are given by
 \begin{equation}\label{eq:zgf}
  \afb(t,X_t) := \widehat a\big( \zfb(t,X_t)\big),
 ~\mbox{and}~
 \bfb(t,X_t) := \widehat b\big( \gfb(t,X_t)\big),\; t\in[0,T],
\end{equation}
where
 \[\zfb(t,x) := \bar v_x(t,x),
 ~
 \gfb(t,x):= \bar v_{xx}(t,x) - \rho \bar v_x^2(t,x) - h,\; (t,x)\in[0,T]\times\R.
\]

 \noindent {$(iii)$} Denoting $\nu_{ {\mbox{\footnotesize\rm\sc fb}}}:=(a_{{\mbox{\footnotesize\rm\sc fb}}}, b_{{\mbox{\footnotesize\rm\sc fb}}})$, the optimal first--best contract can be written as
 \[
  \xifb
  =
  L_0  
  - \frac{p}{p+r}  \bar{v}(0,X_0)  
  - \frac{r}{p+r} \int_0^T \big(f(X_t)-c(\nu_{{\mbox{\footnotesize\rm\sc fb}}}(t,X_t))\big)\mathrm{d}t
  - \frac{p}{p+r} \int_0^T  g(X_t)\mathrm{d}t  + \frac{h}{2} \langle X\rangle_T. 
  \]
\end{Proposition}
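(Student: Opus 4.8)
The plan is to observe that the reduction culminating in \eqref{FB-barV} has already eliminated all the contracting structure: what is left is the pure stochastic control problem defining $\bar V$, in which the planner freely selects the effort $\nu=(\alpha,\beta)$ (efforts being observable and enforceable in the first best) to maximise an exponential criterion. I would therefore proceed in five steps: characterise $\bar V$ by dynamic programming; linearise through the exponential change of variable $\bar V=-\mathrm e^{-\rho\bar v}$ to reach \eqref{ed:vbar}; establish the linear growth of $\bar v$; read off the optimal feedback efforts; and finally substitute back into \eqref{ximu} to recover $\xifb$.

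For (i), introduce the dynamic value function, with $X$ the controlled process started from $(t,x)$,
\[
\bar V(t,x):=\sup_{\P^\nu\in\Pc}\E^{\P^\nu}\Big[-\mathrm e^{-\rho\big(\int_t^T((f-g)(X_s)-c(\nu_s))\drm s-\frac h2(\langle X\rangle_T-\langle X\rangle_t)\big)}\Big],
\]
with terminal value $\bar V(T,\cdot)=-1$. Since $\frac h2\drm\langle X\rangle_s=\frac h2|\sigma(\beta_s)|^2\drm s$, this is a running--cost exponential problem with integrand $L(x,a,b):=(f-g)(x)-c(a,b)-\frac h2|\sigma(b)|^2$. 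Standard stochastic control theory shows $\bar V$ is a viscosity solution of the corresponding HJB equation: requiring that $s\mapsto\mathrm e^{-\rho\int_t^sL\,\drm u}\bar V(s,X_s)$ be a supermartingale for every control and a martingale at the optimum, and dividing out the strictly positive multiplicative factor, gives
\[
0=\sup_{(a,b)}\Big\{\partial_t\bar V-(a\cdot\1)\partial_x\bar V+\tfrac12|\sigma(b)|^2\partial_{xx}\bar V-\rho\,\bar V\,L(x,a,b)\Big\}.
\]
Substituting $\bar V=-\mathrm e^{-\rho\bar v}$, with $\partial_x\bar V=\rho\mathrm e^{-\rho\bar v}\bar v_x$ and $\partial_{xx}\bar V=\rho\mathrm e^{-\rho\bar v}(\bar v_{xx}-\rho\bar v_x^2)$, the common factor $\rho\mathrm e^{-\rho\bar v}$ cancels and the supremum separates into an $\alpha$- and a $\beta$-part. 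By the very definitions of the Hamiltonians these are $-\inf_a\{a\cdot\1\,\bar v_x+c_1(a)\}=\Hm(\bar v_x)$ and $-\tfrac12\inf_b\{c_2(b)-(\bar v_{xx}-\rho\bar v_x^2-h)|\sigma(b)|^2\}=\Hv(\bar v_{xx}-\rho\bar v_x^2-h)$, so that \eqref{ed:vbar} follows together with $\bar v(T,\cdot)=0$.

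The growth control is obtained exactly as in Proposition~\ref{prop:cru}(i). Because $f-g$ is Lipschitz, $c\ge0$ is bounded on the compact control set $A\times B$, and $\langle X\rangle_T\le(\sum_j\sigma_j^2)T$ is bounded since $\beta$ is bounded, I would sandwich $\bar V$ between constant multiples of $\mathrm e^{\pm\rho L_f(T-t)|x|}$, where $L_f$ is the Lipschitz constant of $f-g$, using the moment bounds for the centred Gaussian process started from the origin; this yields $|\bar v(t,x)|\le C(T-t)|x|$. For (ii), when $\bar v$ is smooth the maximisers in the separated HJB are precisely the feedback maps of Proposition~\ref{prop:agentresponse} evaluated at $\zfb=\bar v_x$ and $\gfb=\bar v_{xx}-\rho\bar v_x^2-h$, i.e. $\afb=\widehat a(\zfb)$ and $\bfb=\widehat b(\gfb)$; a verification argument mirroring Proposition~\ref{prop:cru}(ii) --- turning the HJB inequality into an equality along this feedback, the local--martingale term vanishing by boundedness, and the controlled SDE $\drm X_t=-\widehat a(\zfb)\cdot\1\,\drm t+\sigma(\widehat b(\gfb))\cdot\drm W_t$ admitting a weak solution by continuity and boundedness of its coefficients (Karatzas--Shreve) --- confirms optimality and the identity $\bar V=-\mathrm e^{-\rho\bar v(0,X_0)}$.

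Finally, for (iii) I would substitute the optimal multiplier $\ell^\star=\frac pr(\bar V/R)^{1+p/r}$ from \eqref{FB-barV} and the optimal effort $\nu_{\mbox{\footnotesize\rm\sc fb}}$ into the expression \eqref{ximu} for $\xi_\ell$. Using $\bar V=-\mathrm e^{-\rho\bar v(0,X_0)}$ and $R=-\mathrm e^{-rL_0}$, the logarithmic term collapses, since $\rho/r=p/(p+r)$, to $\frac1r\ln(\bar V/R)=L_0-\frac\rho r\bar v(0,X_0)=L_0-\frac{p}{p+r}\bar v(0,X_0)$, and inserting $\Gc^{\nu}_T$ and $\Kc^{\nu}_T$ evaluated at $\nu_{\mbox{\footnotesize\rm\sc fb}}$ gives the announced contract. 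I expect the genuine difficulty to lie not in this algebra but in the rigorous dynamic--programming step: justifying the viscosity characterisation and the growth estimate for an exponential control problem with unbounded state space and merely Lipschitz $f-g$, and especially the verification argument, which hinges on weak existence for the optimal SDE and a uniform integrability control allowing the localisation limit to be passed inside the expectation.
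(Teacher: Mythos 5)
Your proposal is correct and follows essentially the same route as the paper's proof: the HJB characterisation of $\bar V$, the exponential change of variable $\bar V=-\mathrm{e}^{-\rho\bar v}$ yielding \eqref{ed:vbar} via the definitions of $\Hm$ and $\Hv$, the sandwich/growth estimate and verification argument modelled on Proposition~\ref{prop:cru}, and the substitution of $\ell^\star$ from \eqref{FB-barV} into \eqref{ximu} using $\rho/r=p/(p+r)$ to obtain $\xifb$. The only cosmetic difference is that you spell out the supermartingale justification of the HJB and the boundedness of $\langle X\rangle_T$ in the growth bound, which the paper leaves implicit under ``standard stochastic control theory.''
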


\begin{proof} By standard stochastic control theory, $\bar V$ can be characterised by means of the corresponding HJB equation
$$\begin{cases}
  \displaystyle \partial_t \bar V
  -\rho(f-g) \bar V
  +\sup_{a\in\R_+^d}\big\{ -a\cdot \1 \bar V_x+\rho c_1(a) \bar V \big\}
  +\frac12 \sup_{b\in[0,1)^d} \big\{ |\sigma(b)|^2(\bar V_{xx}+\rho h \bar V)+\rho c_2(b) \big\} 
  =
  0
  \\
  
  \displaystyle  \bar V(T,.) = -1.
  \end{cases}$$
Setting $\bar V(t,x) =  - e^{ - \rho \bar v(t,x)}$, we obtain by direct substitution the PDE satisfied by $\bar v$
 \begin{align}
 \label{ed:vbar2}
 \begin{cases}
\displaystyle  -\partial_t \bar v
 =  (f-g)  - \inf_{a} \big\{ a \cdot \1 \bar v_x + c_1(a)  \big\}  - \frac12 \inf_{b} \big\{ c_2(b)- |\sigma(b)|^2 \big( \bar v_{xx} - \rho \bar 
v_x^2 - h\big)  \big\}
\\
\displaystyle   \bar v(T,x)=0, 
\end{cases}                  
 \end{align}
which coincides with the PDE in the proposition statement, by definition of the consumer's Hamiltonian \eqref{eq:agentH}. We next 
prove the control on the growth of $v$. First, as the cost function $c$ is non-negative, we have
\[
 \bar V 
 \le 
 \sup_{\P^\nu} \E^{\P^\nu} \Big[ -e^{-\rho( \int_0^T (f-g)(X_t)\drm t)}
                                           \Big]
 \le 
 \sup_{\P^\nu} \E^{\P^\nu} \Big[ -e^{-\rho( (f-g)(X_0)+|f'-g'|_\infty\int_0^T |X_t|\drm t)}
                                           \Big]
 <-\infty.
\]
On the other hand, as the cost of no-effort $c(0,1)=0$, it follows that
\[
 \bar V 
 \ge 
 \E^{\P^{0,1}} \Big[ -e^{-\rho( \int_0^T (f-g)(X_t)\drm t)}
                                           \Big]
 \ge 
 \sup_{\P^\nu} \E^{\P^\nu} \Big[ -e^{-\rho( (f-g)(X_0)-|f'-g'|_\infty\int_0^T |X_t|\drm t)}
                                           \Big]
 >-\infty.
\]
This shows that $e^{C_1T|X_0|}\le\bar V\le e^{C_2T|X_0|}$, for some constants $C_1,C_2>0$, and therefore $|E(0,X_0)|\le 
(C_1\vee C_2)|x| T$. By homogeneity of the problem, we deduce the announced control on growth by simply removing the time 
origin to any $t<T$.

\vspace{0.5em}
Under smoothness assumptions, we follow the line of the verification argument of the proof of Proposition 
\ref{prop:cru} to prove that the optimal consumer's response derived in 
Proposition~\ref{prop:agentresponse} is an optimal feedback control for the problem $\bar V$. Using the fact that $\Vfb = R_0 
\big(\frac{1}{R_0} e^{- \rho (\bar v(0, X_0))} \big)^{\frac{p+r}{r}}$ and $L_0 = -\frac{1}{r} \log(-R_0)$, one gets 
$$\Vfb = -e^{-p (\bar v(0,X_0) - L_0)}.$$
Finally, the expression of $\xifb$ follows by direct substitution of the optimal Lagrange multiplier \eqref{FB-barV} in \eqref{ximu}.
\qed
\end{proof}

\hspace{5mm}

In the case when $(f-g) x = \delta x$, we make the guess that $\bar v(t,x) = \delta (T-t) x + \int_0^t \bar m(s) \drm s$ satisfies the PDE~\eqref{eq:vbar} where 
\[
\bar m(t) = \Hm(\delta(T-t)) + \Hv(-h - \rho \delta^2 (T-t)^2).
\]
The value function $\bar v$ satisfies the hypothesis of Proposition~\ref{prop:fb} and the PDE~\eqref{eq:vbar}. Rearranging the terms in the expression of the first--best contract~(iii) of Proposition~\ref{prop:fb} leads to the form of the contract in Proposition~\ref{prop:fblinear}. 

\subsection{Proof of Proposition \ref{prop:sblinear} {\rm Second--best contract}}
\label{app:sblinear}

As the volatility induced by responsiveness effort is uniformly bounded above zero, and the level reduction effort is bounded, we may follow the general methodology of Cvitani\'c et al. (2018) \cite{cvitanic2018dynamic}, based on Sannikov (2008) 
\cite{Sannikov08}. Let $\mathcal V$ be the collection of all pair processes $(Z,\Gamma)$ and constants $y_0\in\R$, inducing the 
subclass of contracts $\xi=Y^{y_0,Z,\Gamma}_T$, where
 \begin{equation}\label{eq:Y}
 Y^{y_0,Z,\Gamma}_t
 :=
 y_0
 +\int_0^t Z_s \mathrm{d}X_s + \frac{1}{2}\int_0^t\big(\Gamma_s +r Z_s^2\big)\mathrm{d}\langle X\rangle_s -\int_0^t\big(H(Z_s,\Gamma_s) + f(X_s) 
\big) \mathrm{d}s,
 ~t\in[0,T].
 \end{equation}
We recall from Cvitani\'c et al. (2018) that $U_A\big(Y_t^{y_0,Z,\Gamma}\big)$ represents the Agent's continuation utility so that 
$Y_t^{y_0,Z,\Gamma}$ is the time $t$ value of the Agent's certainty equivalent. This contract is affine in the level of consumption 
deviation $X$ and the corresponding quadratic variation $\langle X\rangle$, with linearity coefficients $Z$ and $\Gamma$. The 
constant part $\int_0^T (H(Z_s,\Gamma_s) + f(X_s) )\mathrm{d}s$ represents the certainty equivalent of the utility gain of the consumer that 
can be achieved by an optimal response to the contract, and is thus subtracted from the Principal's payment, in agreement with 
usual Principal--Agent moral hazard type of contract (see \cite[Chapter 4]{Laffont02}). Further, in the present setting, the risk 
aversion of the Agent implies that the infinitesimal payment $Z_ t \mathrm{d}X_t$ must be compensated by the additional payment $\frac12 
r Z_t^2 \mathrm{d}\langle X \rangle_t$, so that, formally, the resulting impact of the payment  $Z_ t \mathrm{d}X_t$ on the Agent's expected utility is
 \[
 - \exp\bigg(- r \bigg( Z_t \mathrm{d}X_t + \frac12 r Z_t^2 \mathrm{d}\langle X \rangle_t \bigg)\bigg) +1 
 \sim
 -1 + r\bigg(  Z_t \mathrm{d}X_t + \frac12 r Z_t^2 \mathrm{d}\langle X \rangle_t \bigg) - \frac12 r^2 Z^2_t \mathrm{d}\langle X \rangle_t  + 1
 \;\sim\;
 r Z_t \mathrm{d}X_t.
 \]
Under the optimal response of the consumer, the dynamics of the consumption deviation and the certainty equivalent of the 
consumer are given by
 \begin{align*}
 X^{Z,\Gamma}_t
 :=&\
 \displaystyle
 X_0-\int_0^t\widehat a(Z_s)\cdot{\bf 1}\mathrm{d}s+\int_0^t\widehat\sigma(\Gamma_s)\cdot \mathrm{d}W_s
 \\
 Y^{Y_0,Z,\Gamma}_t
 =&\
 \displaystyle
 Y_0
 +\int_0^t\Big( c\big(\widehat a(Z_s), \widehat b(\Gamma_s)\big) - f(X^{Z,\Gamma}_s) + \frac12 r Z_s^2  | \widehat 
\sigma(\Gamma_s) |^2   \Big)\mathrm{d}s
 +\int_0^t Z_s\widehat\sigma(\Gamma_s)\cdot \mathrm{d}W_s,
\end{align*}
so that the average rate of payment consists in paying back the consumer her costs minus benefit $c-f$, and an additional compensation for the risk taken by the consumer for bearing the volatility of consumption deviation. Note that the average rate of payment can be positive or negative.

\begin{Remark} \label{rem:R0}
{\rm 
\noindent $(i)$ Assume that the producer proposes the contract $\xi^0$ defined by $y_0 = - 
\frac{1}{r} \ln (-R_0)$, $Z =\Gamma \equiv 0$, i.e.  $\xi^0 = - \frac{1}{r} \ln (-R_0) - \int_0^T f(X_t^\nu) \mathrm{d}t$, as the Hamiltonian 
satisfies here $H_{\rm v}(0) \equiv 0$. Then, the optimal response of the consumer is obtained by solving the utility maximisation 
problem 
\[V_A(\xi^0)
 =
 \sup_{\P^\nu\in\Pc} \E\Big[  U_{\rm A} \Big( -  \int_0^T  c(\nu_t) \mathrm{d}t\Big)  \Big].\] As the cost function $c$ is non--decreasing in the 
effort, at the optimum, the consumer makes no effort, neither on the drift, nor on the volatility. Comparing with Proposition \ref{prop:cru}, this shows that the absence of contract is different from the above 
contract $\xi^0$, with zero payment rates.

\vspace{0.5em}
\noindent $(ii)$ We may also examine the case where the producer offers a contract with payment $\xi=0$ to the consumer. This is achieved 
by choosing $Z_t=E_x(t,X_t)$ and $\Gamma_t=(E_{xx}-E_x^2)(t,X_t)$, where the certainty equivalent reservation $E$ is defined 
in Proposition \ref{prop:cru}. From the point of view of the consumer, such a contract is clearly equivalent to 
the no contracting setting, and thus induces a positive effort on the volatility in the consumer's optimal response.}
\end{Remark}

By the main result of Cvitani\'c et al. (2018), we may reduce the Principal's problem to the optimisation over the class of 
contracts $Y^{y_0,Z,\Gamma}_T$, where $y_0\ge L_0$ and $(Z,\Gamma)\in\Vc$. By the obvious monotonicity in 
$y_0$, this leads to the following standard stochastic control problem
\[
\Vsb
 =
 \sup_{Z,\Gamma}
 \E\big[U\big(-L_T^{Z,\Gamma}\big)\big],
 \mbox{ with }
 L^{Z,\Gamma}_t
 :=
 Y^{Z,\Gamma}_t
 +\int_0^t g\big(X^{Z,\Gamma}_s\big)\mathrm{d}s
                +\frac{h}{2}\mathrm{d}\langle X^{Z,\Gamma}\rangle_s,
~t\in[0,T],
\]
and starting point $y_0 = L_0$. The state variable $L$ represents the loss of the producer under the optimal 
response of the consumer, and is defined by the dynamics
\[
 \mathrm{d}L^{Z,\Gamma}_t
=
 \frac12\big( 2(g-f)(X^{Z,\Gamma}_t) 
                    + \widehat c_1(Z_t)  
                    + f_0(r Z_t^2+h,\Gamma_t)
            \big)\mathrm{d}t
 + Z_t\widehat\sigma(\Gamma_t)\cdot \mathrm{d}W_t,
 ~t\in[0,T).
\]
where
 \begin{equation}\label{F0f0}
 f_0(q,\gamma)
 :=
 q\big|\widehat\sigma(\gamma)\big|^2 + \widehat c_2(\gamma).
 \end{equation}
The function $f_0(q, \gamma)$ measures the total cost the producer incurs from the volatility, when the unit cost of volatility is $q$ and the rate of payment for the volatility reduction is $\gamma$. The term $q | \widehat \sigma(\gamma) |^2$ is the instantaneous cost of volatility while the term $
\widehat c_2(\gamma)$ is the cost of effort incurred by the consumer. This last cost will be paid by the producer, and enters thus in 
the evaluation of the cost of volatility. The producer aims at making the term $| \widehat \sigma(\gamma) |^2$ as small as possible. 
To achieve this objective, a sufficiently large $\gamma$ should be paid to reduce $| \widehat \sigma(\gamma) |^2$, but this can be 
done only at the expense of an increasing cost $\widehat c_2(\gamma)$. 

\begin{Lemma} \label{lem:F0}
Let $F_0(q): =\inf_{\gamma\le 0} f_0(q,\gamma)$. Then $F_0(q)=f_0(q,-q)=-2H_{\rm v}(-q)$ is non--decreasing.
\end{Lemma}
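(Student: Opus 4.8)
The statement bundles two equalities and a monotonicity claim. The second equality, $f_0(q,-q) = -2\Hv(-q)$, I would dispatch immediately: by the closed form of the volatility Hamiltonian in Proposition~\ref{prop:agentresponse}, $\Hv(\gamma) = -\frac12\big(\widehat c_2(\gamma) - \gamma|\widehat\sigma(\gamma)|^2\big)$, so evaluating at $\gamma = -q$ gives $\Hv(-q) = -\frac12\big(\widehat c_2(-q) + q|\widehat\sigma(-q)|^2\big) = -\frac12 f_0(q,-q)$. Hence the real content is the first equality $F_0(q) = f_0(q,-q)$ together with monotonicity; throughout I take $q \ge 0$, as is the case in the application where $q = rZ_t^2 + h$ with $h\ge 0$.

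The plan is to reduce the one-dimensional optimisation over $\gamma \le 0$ to a static minimisation over $b \in B$. First I would substitute the definitions $\widehat\sigma(\gamma) = \sigma(\widehat b(\gamma))$ and $\widehat c_2(\gamma) = c_2(\widehat b(\gamma))$ to write $f_0(q,\gamma) = q|\sigma(\widehat b(\gamma))|^2 + c_2(\widehat b(\gamma))$. Since $\widehat b(\gamma) \in B$ for every $\gamma \le 0$, each such value is bounded below by $m(q) := \inf_{b\in B}\{q|\sigma(b)|^2 + c_2(b)\}$; taking the infimum over $\gamma \le 0$ then yields $F_0(q) \ge m(q)$.

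For the reverse inequality, the key observation is the sign flip $q|\sigma(b)|^2 + c_2(b) = c_2(b) - (-q)|\sigma(b)|^2$. By the very definition of the consumer's volatility response, the minimiser over $b \in B$ of the right-hand side is $\widehat b(-q)$ --- this is precisely the optimisation defining $\Hv(-q)$ in Proposition~\ref{prop:agentresponse} --- so $m(q) = q|\widehat\sigma(-q)|^2 + \widehat c_2(-q) = f_0(q,-q)$. Because $-q \le 0$ is admissible in the definition of $F_0$, one also has $F_0(q) \le f_0(q,-q) = m(q)$. The two bounds sandwich to give $F_0(q) = m(q) = f_0(q,-q)$, which, combined with the first paragraph, is $F_0(q) = -2\Hv(-q)$.

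Finally, monotonicity follows from the representation $F_0(q) = \inf_{b\in B}\{q|\sigma(b)|^2 + c_2(b)\}$: for each fixed $b$ the map $q \mapsto q|\sigma(b)|^2 + c_2(b)$ is affine with non-negative slope $|\sigma(b)|^2 \ge 0$, hence non-decreasing, and an infimum of non-decreasing functions is non-decreasing. I expect no genuine analytic obstacle here; the only point requiring care is recognising that, although $\{\widehat b(\gamma) : \gamma \le 0\}$ is merely a one-parameter curve inside the box $B$, it nonetheless meets the unconstrained-over-$B$ minimiser exactly at $\gamma = -q$, which is what makes the sandwich tight.
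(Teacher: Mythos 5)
Your proof is correct, and it follows a genuinely different route from the paper's. The paper proceeds by explicit one--dimensional calculus in $\gamma$: it differentiates $\widehat b_j$ regime by regime (the three cases $\lambda_j\gamma^-\le 1$, $1<\lambda_j\gamma^-<\varepsilon^{-2}$, $\lambda_j\gamma^-\ge\varepsilon^{-2}$), computes $\frac{\partial f_0}{\partial\gamma}(q,\gamma)=-\big(1+\frac{q}{\gamma}\big)\sum_i\frac{\sigma_i^2}{2}\widehat b_i(\gamma)\1_{\{-\varepsilon^{-2}<\lambda_i\gamma<-1\}}$ to locate the minimiser at $\gamma=-q$, and then sums the per--usage values $f_j(-q)$ into an explicit piecewise formula for $F_0(q)$, from which monotonicity is read off. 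You instead avoid all casework with a sandwich argument: every point $\widehat b(\gamma)$, $\gamma\le 0$, lies in the box $B$, so $F_0(q)\ge m(q):=\inf_{b\in B}\{q|\sigma(b)|^2+c_2(b)\}$; and the sign flip $q|\sigma(b)|^2+c_2(b)=c_2(b)-(-q)|\sigma(b)|^2$ identifies $m(q)$ with the infimum defining $H_{\rm v}(-q)$, which Proposition~\ref{prop:agentresponse} says is attained at $\widehat b(-q)$ --- a point on the curve, reached at the admissible parameter $\gamma=-q$. Monotonicity is then immediate as an infimum of non--decreasing affine functions of $q$. What your route buys: it is shorter, it never uses the specific power form of $c_2$ or the structure of the truncation (so it generalises to any convex cost and compact effort set), it makes the identity $F_0(q)=-2H_{\rm v}(-q)$ conceptually transparent rather than a computational coincidence, and you correctly make explicit the restriction $q\ge 0$ that both proofs need (without it $\gamma=-q$ is not admissible), which the paper leaves implicit. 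What the paper's route buys: the explicit piecewise closed form of $F_0$, which is the by--product actually exploited later (e.g.\ in the information--rent computation of Proposition~\ref{prop:inforent}, where $F_0(q)=q|\sigma|^2$ on the regime $\bar\lambda q\le 1$ is used); under your argument that regime identity still follows easily, but one must go back and evaluate $\widehat b(-q)$ there.
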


\noindent {\bf Proof}:  \noindent Recall that
$ f_0(q,\gamma) = q\big|\widehat\sigma(\gamma)\big|^2 + \widehat c_2(\gamma)$ 
where
\[\widehat c_2(\gamma)  
=  
\sum_{j=1}^d \frac{\sigma_j^2}{\lambda_j}\Big( \widehat b_j(\gamma)^{-1} -1\Big),
\;
\big|\widehat\sigma(\gamma)\big|^2 
=   
\sum_{j=1}^d \sigma_j^2 \widehat b_j(\gamma)
\;
\text{and} 
\;
\widehat b_j(\gamma):=1\wedge(\lambda_j\gamma^-)^{-\frac{1}{2}}\vee\varepsilon. \]
If $\lambda_j \gamma^- \leq 1$ then $\widehat b_j(\gamma) = 1$ and thus $\widehat b^\prime_j(\gamma) =0$. Similarly, if $\lambda_j \gamma^-\geq\varepsilon^{-(1+1)}$, then $\widehat b_j(\gamma) = \varepsilon$ and thus $\widehat b^\prime_j(\gamma) =0$. Finally, if $\varepsilon^{-(1+1)}>\lambda_j \gamma^- > 
1$ then  $\widehat b_j(\gamma) = (-\lambda_i \gamma)^{- \frac12 }$  and thus 
\begin{align*}
\widehat b^\prime_j(\gamma)  & =   \frac{1}{2} \lambda_j^{- \frac{1}{2}} (-\gamma)^{- \frac{1}{2}-1} = -  \frac{1}{2 \gamma }  \widehat b_j(\gamma).
\end{align*}
Define now 
\[
f_j(\gamma) := \frac{\sigma_j^2}{\lambda_j}\big( \widehat b_j(\gamma)^{-1} -1\big) + q \sigma_j^2 \widehat 
b_j(\gamma).\]
We have
\[
f^\prime_j(\gamma) =  \frac{\sigma_j^2}{\lambda_j} \Big( - \widehat b_j(\gamma)^{-2 }  \widehat b'_j(\gamma) \Big) + q 
\sigma_j^2 \widehat b'_j(\gamma).
\]
If $-\varepsilon^{-(1+1)}<\lambda_j \gamma < -1$, one has $\widehat b_j(\gamma)^{-2} = - \lambda_j \gamma$ and thus
\begin{align*}
f^\prime_j(\gamma)  =  \sigma_j^2 \big( \gamma + q ) \widehat b'_j (\gamma)  = - \Big( 1 + \frac{q}{\gamma} \Big) \frac{\sigma_j^2}{2} 
\widehat b_j(\gamma) \1_{\{-\varepsilon^{-2}< \lambda_j \gamma < -1\}}.
\end{align*}
Hence
\[
 \frac{\partial f_0}{\partial\gamma}(q,\gamma)
 =
 -\Big(1+\frac{q}{\gamma}\Big)
 \sum_{i=1}^d \frac{\sigma_i^2}{2}
                       \widehat b_i(\gamma)
                       \1_{\{-\varepsilon^{-2}<\lambda_i\gamma<-1\}}.
\]
Therefore, the minimum of $f_0(q,\gamma)$ over negative $\gamma$ is reached either at $\gamma = -q$ when there is at least one index $i\in\{1,\dots,d\}$ such that $q\in[1/\lambda_i,\varepsilon^{-2}/\lambda_i]$, and otherwise $f_0$ does not depend on $q$. Hence, we can consider that the minimiser is always $\gamma = -q$, and thus $F_0(q) = f_0(q,- q)$.

\vspace{0.5em}
\noindent Direct computations now show that
\[
f_j( - q ) = \frac{\sigma_j^2}{\lambda_j}
                          \bigg( \1_{\{\lambda_j q\le 1\}} \lambda_j q
                                  + \1_{\{\varepsilon^{-2}>\lambda_j q > 1\}}
                                     \Big( 2 (\lambda_j q)^{\frac{1}{2}} - 1 \Big)
                                             + \1_{\{ \varepsilon^{-2} \leq \lambda_j q \}}  \bigg( \lambda_j \varepsilon q + \varepsilon^{-1} -1 \bigg)\bigg),
\]

so that by adding all the terms
\[
  F_0(q)
 =
 \sum_{j=1}^d \frac{\sigma_j^2}{\lambda_j}
                          \bigg( \1_{\{\lambda_j q\le 1\}} \lambda_j q
                                  + \1_{\{\varepsilon^{-2}>\lambda_j q > 1\}}
                                     \Big(2 (\lambda_j q)^{\frac{1}{2}} - 1\Big)
                                             + \1_{\{\varepsilon^{- 2} \leq \lambda_j q \} } \bigg( \lambda_j \varepsilon q + \ \varepsilon^{-1} - 1  \bigg)\bigg),
\]
from which it is clear that $F_0$ is non--decreasing. 
\qed  \hfill$\Box$

\vspace{0.5em}
The value function of the second--best problem can be characterised as follows.

\begin{Proposition}[Second--best contract] \label{prop:sbnonlinear}
Assume that $f-g$ is Lipschitz continuous. Then

\vspace{0.5em}
\noindent {$(i)$} $ \Vsb  =  -\mathrm{e}^{-p(v(0,X_0) - L_0)}$ where $v$ has growth $|v(t,x)|\le C(T-t)|x|$, for some constant $C>0$, and is a viscosity 
solution of the {\rm PDE}
 \begin{equation}
 \label{eq:vbar}
\begin{cases}
\displaystyle - \partial_t v  
 =  f-g  + \frac12 \bar \mu\,  v_x^2 
     - \frac{1}{2} \inf_{z\in\R} \big\{ F_0\big( q(v_x, v_{xx}, z) \big)
                                                       +\bar\mu\big((z^-+v_x)^2+\eta_A(v_x,z)\big)\big\}, \; \mbox{on}\; [0,T)\times \R,\\
v(T,.)=0,     
\end{cases}             
 \end{equation}
with $q(v_x, v_{xx},z) :=  h-v_{xx} +r z^2+p(z-v_x)^2$, and $\eta_A(v_x,z):=(v_x+(z^--A)^+)^2-v_x^2\longrightarrow 0$, as $A\nearrow\infty$.

\vspace{0.5em}
\noindent {$(ii)$} If in addition $v$ is smooth, the optimal payment rate $\gsb$ to incentivise the agent responsiveness is
 \begin{equation}\label{eq:zgstarra}
 \gsb(t,X_t)
 :=
 - h + v_{xx}(t,X_t) - r \zsb^2(t,X_t) 
  - p \big(\zsb(t,X_t) - v_x(t,X_t)\big)^2, \; t\in[0,T],
 \end{equation}
and the optimal payment rate for the consumption deviation reduction is the minimiser $\zsb$ in \eqref{eq:vbar}, satisfying for large $A$:
\[
 \zsb \in \bigg(v_x, \frac{p}{r+p} v_x\bigg),
 ~\mbox{when}~
 v_x \le 0,
 \mbox{ and } 
 \zsb = \frac{p}{r+p} v_x,~\mbox{when}~v_x \ge 0.
\]

\noindent {$(iii)$} The second--best optimal contract is given by
 \[
 \xisb
 :=
 \frac{-\log{(-R)}}{r}
 + \int_0^T \zsb(t,X_t)\mathrm{d}X_t
                 +\frac12( \gsb
                               +r \zsb^2)(t,X_t)\mathrm{d}\langle X\rangle_t
                  -\big(H(\zsb,\gsb)
                           +f
                   \big)(t,X_t)\mathrm{d}t.
\]
\end{Proposition}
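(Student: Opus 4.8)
The starting point is the reduction, recalled just above the statement, of the producer's problem to the standard stochastic control problem
\[
\Vsb = \sup_{Z,\Gamma}\E\big[U\big(-L^{Z,\Gamma}_T\big)\big],
\]
over pairs $(Z,\Gamma)\in\Vc$, with controlled state $\big(X^{Z,\Gamma},L^{Z,\Gamma}\big)$ and initial loss $y_0=L_0$. The plan is to treat this as a two-dimensional Markovian control problem in the variables $(x,\ell)$ and to use the exponential structure to collapse one dimension. Since the drift and diffusion of $L$ do not depend on the current value of $\ell$, and $U(x)=-\mathrm e^{-px}$, I would look for the value function in the separated form $\mathcal V(t,x,\ell)=-\mathrm e^{p(\ell-v(t,x))}$, so that the terminal condition $\mathcal V(T,x,\ell)=-\mathrm e^{p\ell}$ forces $v(T,\cdot)=0$ and evaluation at $(0,X_0,L_0)$ yields $\Vsb=-\mathrm e^{-p(v(0,X_0)-L_0)}$, which is the announced form in (i).

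Next I would substitute this ansatz into the HJB equation of the control problem. After computing the derivatives and factoring out $\mathcal V<0$, the producer's supremum turns into an infimum over $(Z,\Gamma)$, and the $\Gamma$-dependence enters only through the combination $f_0\big(q(v_x,v_{xx},Z),\Gamma\big)$, where $q(v_x,v_{xx},z)=h-v_{xx}+rz^2+p(z-v_x)^2$ emerges after completing the square in the second-order terms (the cross term between $X$ and $L$, together with $\partial_{\ell\ell}$, producing the $p(z-v_x)^2$ factor). Minimising first over $\Gamma$ and invoking Lemma~\ref{lem:F0} gives $\inf_\Gamma f_0(q,\Gamma)=F_0(q)$, attained at $\Gamma=-q$; this is exactly the formula for $\gsb$ in (ii). The remaining infimum over $Z$ collects the drift terms $\tfrac12\bar\mu (Z^-\wedge A_{\max})^2+\bar\mu v_x (Z^-\wedge A_{\max})$ coming from the cost $\widehat c_1$ and the drift contribution $v_x\,\widehat a(Z)\cdot\mathbf 1$, and completing the square in $Z^-$ produces the $\tfrac12\bar\mu v_x^2$ term together with the reduced Hamiltonian $\tfrac12\inf_z\big\{F_0(q)+\bar\mu(z^-+v_x)^2\big\}$; the bound $A_{\max}$ on the drift effort is what introduces the correction $\eta_A$, which vanishes pointwise as $A_{\max}\to\infty$ and, more importantly, does not bind at the optimiser once $A_{\max}$ is large. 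This is how the PDE in (i) arises.

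To read off the optimiser $\zsb$ and its bounds in (ii), I would analyse the scalar map $z\mapsto F_0\big(q(z)\big)+\bar\mu(z^-+v_x)^2$: the first term is non-decreasing in $q$ (Lemma~\ref{lem:F0}) and $q$ is a convex parabola minimised at $\tfrac{p}{r+p}v_x$, while the second term is minimised at $z=v_x$ (where $z^-+v_x=0$ for $v_x\le0$). When $v_x\ge0$ both pull towards $z=\tfrac{p}{r+p}v_x\ge0$, giving equality there; when $v_x\le0$ the minimiser is squeezed strictly between the two competing targets, giving $\zsb\in\big(v_x,\tfrac{p}{r+p}v_x\big)$. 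For the growth control $|v(t,x)|\le C(T-t)|x|$ I would sandwich $\Vsb$ between the no-effort control $(Z,\Gamma)=(0,0)$ and a suitable super-solution exactly as in the proofs of Propositions~\ref{prop:cru} and \ref{prop:fb}, using that $f-g$ is Lipschitz.

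The main obstacle is the verification step: showing that a smooth solution $v$ of the PDE, together with the feedback controls $\zsb(t,X_t)$ and $\gsb(t,X_t)$, is genuinely optimal. I would apply It\^o's formula to $\mathcal V(t,X_t,L_t)$ along an arbitrary admissible $(Z,\Gamma)$, use the HJB inequality to obtain a supermartingale, localise with stopping times and pass to the limit with the help of the linear-growth estimate to control the exponential moments (this is where the integrability condition \eqref{contract-growth} and the growth bound on $v$ are essential), obtaining $\mathcal V(0,X_0,L_0)\ge\E[-\mathrm e^{pL_T}]$; equality for the candidate feedback then requires the weak existence of the induced diffusion $\drm X_t=-\widehat a(\zsb)\,\drm t+\widehat\sigma(\gsb)\cdot\drm W_t$, which follows from the boundedness and continuity of its coefficients via Karatzas and Shreve. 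This delivers (iv) and completes (i)--(ii). Finally, (iii) is immediate: substituting $y_0=L_0=-\tfrac1r\log(-R)$ and the optimal feedback $(\zsb,\gsb)$ into the definition \eqref{eq:Y} of $Y^{y_0,Z,\Gamma}_T$ reproduces the stated closed form for $\xisb$.
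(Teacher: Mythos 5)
Your proposal is correct and takes essentially the same approach as the paper: the same reduction to the two--dimensional control problem in $(x,\ell)$, the same exponential ansatz $V(t,x,\ell)=-\mathrm{e}^{-p(v(t,x)-\ell)}$ collapsing it to the one--dimensional PDE for $v$, the same appeal to Lemma~\ref{lem:F0} to minimise over $\Gamma$ and obtain $\gsb=-q$, the same scalar analysis of $z\mapsto F_0(q(v_x,v_{xx},z))+\bar\mu(z^-+v_x)^2$ locating $\zsb$, and the same growth and verification arguments borrowed from Propositions~\ref{prop:cru} and~\ref{prop:fb}. (The only slip is cosmetic: the part (iv) you mention belongs to Proposition~\ref{prop:sblinear}, not to this statement.)
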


\begin{Remark}
{\rm 
$(i)$ Consider the case of a risk--neutral consumer $r=0$. As $F_0$ is non--decreasing by Lemma \ref{lem:F0}, we see that the 
minimum in the PDE \eqref{eq:vbar} is attained at $\zsb=v_x$, thus reducing the {\rm PDE} \eqref{eq:vbar} to
\[
 -\partial_t v - (f-g)  - \frac12 \bar \mu\,  \big(v_x^-\big)^2 
 = 
 -\frac12 F_0(h-v_{xx})
 =
 H_{\rm v}(v_{xx}-h),        
 \]
where the last equality follows from Lemma \ref{lem:F0}. Notice that this is the same PDE as the first--best characterisation 
given in Proposition \ref{prop:fb} $(i)$, since $\rho=0$ in the present setting.

\vspace{0.5em}
In particular, the producer's value function is independent of her risk--aversion parameter $p$. The optimal payment rates for the 
effort on the drift and the volatility are given by $\zsb = v_x$ and $\gsb = - h + v_{xx} $, so that the resulting optimal contract is 
also independent of the producer's risk--aversion $p$. This is consistent with the findings of H\"olmstrom and Milgrom 
{\rm\cite{Holmstrom87}} in the context of a risk--neutral agent, where  the optimal effort of the Agent is independent of the Principal 
risk aversion parameter.
}
\end{Remark}

\begin{proof}
By standard stochastic control theory, the dynamic version of the value function of the Principal, denoted by $V(t,x,
\ell):=\Vsb(t,x,\ell)$, is a viscosity solution, with appropriate growth at infinity, of the corresponding 
HJB equation
 \begin{equation*}
- \partial_t V 
=
(g -f) V_\ell
+\frac{1}2\sup_{(z,\gamma) \in \R^2} \!\! \Big\{  \big|\widehat\sigma(\gamma)\big|^2
                                                         \big[ (h+r z^2) V_\ell + V_{xx} +  z^2 V_{\ell\ell} + 2z V_{x\ell}\big]
                                                        - \bar\mu\big(2z^- V_x-(z^-)^2 V_\ell\big)
                                                        + \widehat c_2(\gamma) V_\ell
                                              \Big\}  ,	
 \end{equation*}
with terminal condition $V(T,x,\ell) 
 =
 U( -\ell)$, for $(x,\ell) \in\R^2$. Under the constant relative risk aversion specification of the utility function of the producer, it 
follows that
  \begin{align*}
 -p\partial_t v 
 &= 
 p (f-g)  
 - \frac12 \inf_{z,\gamma} \Big\{ \big|\widehat\sigma(\gamma)\big|^2 
                                                              \big((h+r z^2) p - p v_{xx} + p^2 (v_x)^2 + z^2 p^2 -  2 z p^2 v_x \big) 
  \\
&\hspace{9em}  + \bar \mu\big(2 z^- p v_x + p (z^-)^2 \big) + p \widehat c_2(\gamma) \Big\},	
 \end{align*} 
 which reduces to the PDE \eqref{eq:vbar}. 

The control on the growth of the function $v$ is deduced from the control on the growth of  $V$ by following the same line of 
argument as in the proof of Proposition \ref{prop:fb}, using the Lipschitz feature of the difference $f-g$. Similarly, under 
smoothness condition, the same verification argument leads to the optimal feedback controls, defined as the maximisers of the 
second--best producer's Hamiltonian, which determine the optimal payment rates.

We finally verify that the additional properties of the optimal payment rates hold. First, if $v_x \ge 0$, the map 
$z \longmapsto F_0\big(h -v_{xx} +r z^2+p(z- v_x)^2\big) +\bar\mu(z^-+v_x)^2\big\}$
is non--increasing for $z \leq \frac{p}{r+p} v_x$ because $F_0$ is a non--decreasing function. Thus, the minimum of the map is 
reached for the minimum of $q(v_x, v_{xx},z)$ which is $\zsb = \frac{p}{r+p} v_x$. Second, if $v_x \le 0$, the preceding map is non--longer monotonic on the interval $(v_x, \frac{p}{r+p} v_x)$. But, it is non--increasing for $z 
\le v_x$ and non--decreasing for $z \ge \frac{p}{r+p} v_x$, making its infimum lie between $v_x$ and $\frac{p}{r+p} v_x$. In both 
cases, the optimiser with respect to $\gamma$ can be deduced from Lemma~\ref{lem:F0}, and is given by \eqref{eq:zgstarra}.
\qed
\end{proof}

\subsection{Proof of Proposition \ref{prop:nogamma} {\rm Second best uncontrolled responsiveness}}
\label{app:nogamma}

\begin{Proposition}[Second--best uncontrolled responsiveness] 
Assume that $f-g = \delta x$. Then, $\Vsbm  =  U\big( w(0,X_0) - L_0\big))$ where $w$ has growth $|w(t,x)|\le C(T-t)|x|$, for some 
constant $C>0$, and is a viscosity solution of the {\rm PDE}
 \begin{equation}
 \label{eq:w}
-\partial_t w  
 =  (f-g)  + \frac12 \bar \mu\,  w_x^2 
     - \frac{1}{2}\, \inf_{z\in\R} \{ q(w_x, w_{xx}, z) | \sigma|^2 
                                                       +\bar\mu(z^-+w_x)^2\big\}, 
\mbox{ on } [0,T)\times \R,
\mbox{ and }
w(T,.)=0,           
 \end{equation}
with $q(w_x, w_{xx},z) =  h-w_{xx} +r z^2+p(z-w_x)^2$, and the second--best optimal contract is given by
$$
 \xisb^0
 =
 \frac{-\log{(-R)}}{r}
 + \Lambda \delta X_0 
 + \frac12  \int_0^T r \zsb^2 (t) |\sigma|^2 dt
 - \int_0^T H_{\rm m}(\zsb(t)) dt
 - \int_0^T (\kappa - \Lambda \delta) X_t dt
$$
where
 $
 \zsb 
 = \Lambda \delta (T-t) $ with 
 $\Lambda$ 
$ :=$  
$ \frac{p |\sigma|^2+\bar \mu \1_{\{w_x<0\}}} {(p+r) |\sigma|^2+ \bar \mu \1_{\{w_x<0\}}}.$
\end{Proposition}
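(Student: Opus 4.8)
The plan is to adapt, with simplifications, the reduction and verification machinery already developed for the full second--best problem in Appendix~\ref{app:sblinear}, specialised to the restricted class of contracts carrying no genuine responsiveness incentive. Within the parametrisation \eqref{eq:Y}, this sub--class corresponds to fixing $\Gamma\equiv 0$, so that the admissible contracts are exactly $\xi=Y^{y_0,Z,0}_T$ with $y_0\ge L_0$ and $Z$ ranging over the admissible controls. By Proposition~\ref{prop:agentresponse} the consumer's optimal volatility response to $\Gamma=0$ is $\widehat b(0)=\mathbf 1$, i.e. the consumption deviation keeps its nominal volatility $\widehat\sigma(0)=\sigma$ and no responsiveness effort is made; in particular $\mathrm d\langle X\rangle_t=|\sigma|^2\mathrm dt$ is deterministic under the induced measure, which is why the reduced contract will carry no stochastic $\langle X\rangle$ term and hence $\pivsbm=0$. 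Invoking the main result of Cvitani\'c et al. (2018) \cite{cvitanic2018dynamic} exactly as in Appendix~\ref{app:sblinear}, but optimising over $Z$ only, reduces the producer's problem to the scalar stochastic control problem $\Vsbm=\sup_Z\E\big[U(-L^{Z,0}_T)\big]$, whose loss process has the dynamics derived there with the volatility cost $f_0(rZ^2+h,\Gamma)$ frozen at $\Gamma=0$, namely $f_0(rZ^2+h,0)=(rZ^2+h)|\sigma|^2$.

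I would then derive the associated HJB equation and, under the exponential specification of $U$, reduce it via the ansatz $\Vsbm=-\mathrm e^{-p(w-L_0)}$ to the semilinear PDE \eqref{eq:w}; the growth control $|w(t,x)|\le C(T-t)|x|$ follows by sandwiching the value between the no--effort lower bound and an upper bound obtained by dropping the effort cost, both controlled by the Lipschitz property of $f-g=\delta x$, exactly as for $\bar v$ in Proposition~\ref{prop:fb}. The crucial simplification relative to the full second--best PDE \eqref{eq:vbar} is that the volatility cost now appears as the affine expression $q\,|\sigma|^2$ rather than through the concave, non--decreasing function $F_0(q)$, so that the inner optimisation over $z$ becomes a strictly convex, piecewise--quadratic scalar problem instead of the non--monotone problem treated in Proposition~\ref{prop:sbnonlinear}~(ii).

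Next I would insert the affine guess $w(t,x)=\delta(T-t)x+\int_t^T\msbm(s)\,\mathrm ds$, for which $w_x=\delta(T-t)$ and $w_{xx}=0$, so that the PDE collapses to the scalar minimisation defining $\msbm$, with $q(z)=h+rz^2+p(z-\delta(T-t))^2$. Since $z\mapsto q(z)|\sigma|^2+\bar\mu(z^-+\delta(T-t))^2$ is strictly convex with a single kink at $z=0$, I would split into the two branches: when $\delta\ge 0$ the minimiser lies in $\{z\ge 0\}$, where $z^-=0$ and the first--order condition gives $z=\frac{p}{r+p}\delta(T-t)$; when $\delta<0$ the minimiser lies in $\{z\le 0\}$, where $z^-=-z$ and the first--order condition gives $z=\frac{p|\sigma|^2+\bar\mu}{(r+p)|\sigma|^2+\bar\mu}\delta(T-t)$. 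In each case the sign of the candidate is consistent with the assumed branch, confirming $\zsbm(t)=\Lambda\delta(T-t)$ with the stated $\Lambda$, and substitution yields the announced $\msbm$. A verification argument identical to the one in Appendix~\ref{app:sblinear} then certifies that this smooth solution is the value function and that $\zsbm$ is optimal.

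Finally, I would pass from the abstract parametrisation to the rebate representation. Integrating $\int_0^T\zsbm\,\mathrm dX$ by parts (using $\zsbm(T)=0$), replacing $\frac12\int_0^T rZ^2\,\mathrm d\langle X\rangle$ by its deterministic value $\frac12\int_0^T r\zsbm^2|\sigma|^2\,\mathrm dt$ since $\langle X\rangle_t=|\sigma|^2t$ under the optimal response, and using $H(Z,0)=H_{\rm m}(Z)$ together with $f(x)=\kappa x$, the contract $\xi=Y^{L_0,\zsbm,0}_T$ rearranges into $\xisbmf+\xisbmv$, from which matching the $X_t$ coefficient gives the energy price $\piesbm=\kappa-\Lambda\delta=(1-\Lambda)\kappa+\Lambda\theta$ and the absence of any stochastic $\langle X\rangle$ term, i.e. $\pivsbm=0$. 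I expect the main delicate point to be the careful treatment of the kink at $z=0$ and the verification of branch consistency of the minimiser; everything else is a specialisation of the arguments already carried out for the first-- and second--best problems.
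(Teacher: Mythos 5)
Your proof is correct and takes essentially the same route as the paper, whose own argument is just a compressed version of yours: with $\Gamma\equiv 0$ the volatility cost $F_0(q)$ becomes $q|\sigma|^2$, the PDE of Proposition~\ref{prop:sbnonlinear}~(i) reduces to \eqref{eq:w}, the minimiser is obtained from first--order conditions, and the contract form follows by integration by parts applied to $\int_0^T \zsbm(t)\,\mathrm{d}X_t$ in the parametrisation \eqref{eq:Y}. One minor slip that does not affect validity: for $\delta<0$ the map $z\mapsto q(w_x,0,z)|\sigma|^2+\bar\mu\big(z^-+\delta(T-t)\big)^2$ is \emph{not} globally convex (the kink at $z=0$ is concave, its left derivative $-2\bar\mu\,\delta(T-t)>0$ exceeding the right derivative $0$), but your branch--by--branch first--order analysis with the sign--consistency check never uses global convexity --- only strict convexity on each branch and comparison of the two branch minima at the kink --- so the conclusion $\zsbm(t)=\Lambda\delta(T-t)$ stands.
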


\begin{proof} When $\Gamma \equiv 0$, we have $F_0(q) = q |\sigma|^2$, and the PDE of Proposition~\ref{prop:sbnonlinear} (i) reduces  thus to~\eqref{eq:w}. The minimizer is obtained directly by writing first order conditions, and the optimal contract follows. The form of the contract is obtained by applying integration by part to the term $\int_0^T \zsb(t) dX_t$ of the general form of the contract~\eqref{eq:Y}.
\qed
\end{proof}

\subsection{Proof of Proposition \ref{prop:inforent} {\rm Information rent}}
\label{app:inforent}

From Proposition~ \ref{prop:fb}, we have $\Vfb  = U( \bar v(0,X_0) + \frac{1}{r} \log(-R)) $. When $(f-g)(x) = \delta x$, 
we have 
\[
\bar v(0,X_0) = \delta T X_0 - \frac12 \int_0^T F_0(-\gfb(t)) \drm t,
\]
because $\zfb(t) \ge 0$ and thus,  $\widehat c_1(\zfb(t)) = 0$. Further, recall from Corollary~\ref{prop:RespDeltaPos} that when $\delta \ge 
0$, we have $\Vsb = U( v(0,X_0) + \frac{1}{r} \log(-R) )$
 with
\b*
v(0,X_0) = \delta T X_0 - \frac12 \int_0^T F_0(- \gsb(t)) \drm t.
\e*
In this case, we also have $\gsb = \gfb$. Thus, the certainty equivalent of the value function in the first--best and in the second--best 
are equal, and $ I = 0.$  Further, the equality of the certainty equivalent of the first--best and the second--best implies that the 
payments $\xifb$ and $\xisb$ are equal because the actions of the consumer are the same in both cases.

When $\delta < 0$ and $h + r \delta^2 T^2 \le \frac{1}{\bar \lambda}$, we know that $\zsb(t) = 
\Lambda \delta (T-t)$, so that
\[
 \inf_{z\in\R} \Big\{ F_0\big(h +r z^2+p(z- A(t))^2\big)
                                                       +\bar\mu(z^-+A(t))^2\Big\} = \big( h+ r \Lambda \delta^2 (T-t)^2 \big) |\sigma|^2,
\]
and therefore
\[
\psi(t) = \frac12 \int_t^T \bar \mu \delta^2 (T-t)^2 \drm t - \frac12 \int_0^T \Big( h+ r \Lambda \delta^2 (T-t)^2 \Big) |\sigma|^2 \drm t.
\]
Hence, in this case
\begin{align*}
\frac{\log \big(- \Vsb  \big)}{p}   & =
L_0 - \delta T X_0 - \psi(0) \\
 & =  \pi 
    + \frac12 \int_0^T \big(\gamma_s\big| \widehat{\sigma}\big(\gamma_s\big)\big|^2   -\widehat c_2\big(\gamma_s\big) - 
       \bar \mu \delta^2 (T-s)^2\big) \drm s  
   + \frac12 \int_0^T \Big( h+ r \Lambda \delta^2 (T-t)^2 \Big) |\sigma|^2 \drm t.
\end{align*}
In addition, in this setting, $c(\nufb) = c_1(\afb)$ and $ - 
\gfb= h + \rho \delta^2 (T-t)^2$ $\le$ $\frac{1}{\bar \lambda}$ because $\rho < r$, and thus $
\widehat b_j(\gfb) = 1$, and we have $
c_1(\afb(t)) = \frac12 \bar \mu \delta^2 (T-t)^2.
$
Thus we get
\[
I = \frac12 \int_0^T \gfb(t) 
                              \big|\widehat \sigma\big( \bfb(t)\big)\big|^2 \drm t 
     + \frac12 \int_0^T \big( h+ r \Lambda \delta^2 (T-t)^2 \big) |\sigma|^2 \drm t
=
 \frac12|\sigma|^2\big( r \Lambda   
                                      - \rho 
                              \big) \int_0^T \delta^2(T-t)^2 \drm t,
\]
where we used the fact that $ - \gfb = h + \rho \delta^2 (T-t)^2$. The required expression follows.
\qed

\bibliographystyle{plain}

\end{document}